\numberwithin{equation}{section} 
\newtheorem{theorem}{Theorem}[section]
\newtheorem{lemma}{Lemma}[section]
\newtheorem{proposition}{Proposition}[section]
\theoremstyle{definition} 
\newtheorem{definition}{Definition}[section]
\newtheorem{remark}{Remark}[section]
\newcommand{\twS}{\widetilde{S}}
\def\d{\mathrm{d}}
\newcommand{\Ps}{{\mathsf{P}}}
\newcommand{\E}{{\mathsf{E}}}
\newcommand{\PFA}{\mathsf{PFA}}
\newcommand{\pfa}{\mathsf{PFA}}
\newcommand{\ADD}{{\mathsf{ADD}}}
\newcommand{\Hyp}{{\mathsf{H}}}
\newcommand{\Fc}{{ \mathscr{F}}}
\newcommand{\Bmc}{{ \mathscr{B}}}
\newcommand{\wtT}{\widetilde{T}}
\newcommand{\mb}[1]{\mathbf{#1}} 
\newcommand{\Yb}{{\mb{Y}}}
\newcommand{\mbb}[1]{\mathbb{#1}} 
\def\One{\mathchoice{\rm 1\mskip-4.2mu l}{\rm 1\mskip-4.2mu l}
{\rm 1\mskip-4.6mu l}{\rm 1\mskip-5.2mu l}}
\newcommand\Ind[1]{{\One_{\{#1\}}}}
\newcommand{\classA}{{\mbb{C}}_\alpha}
\newcommand{\mc}[1]{\mathcal{#1}} 
\newcommand{\Nc}{{\mc{N}}}
\newcommand{\Kc}{{\mc{K}}}
\newcommand{\Ac}{{\mc{A}}}
\newcommand{\abs}[1]{\left\vert#1\right\vert}
\newcommand{\set}[1]{\left\{#1\right\}}
\newcommand{\brc}[1]{\left(#1\right)}
\newcommand{\brcs}[1]{\left[#1\right]}
\renewcommand{\le}{\leqslant} 
\renewcommand{\ge}{\geqslant}
\def\arginf{\mathop{\rm arg\,inf}}
\newcommand{\vae}{\varepsilon}
\newcommand{\Ts}{T_{\mathrm{s}}}
\begin{document}

\begin{frontmatter}
\title{ASYMPTOTIC BAYESIAN THEORY OF QUICKEST CHANGE DETECTION FOR HIDDEN MARKOV MODELS}
\runtitle{Quickest Change Detection for Hidden Markov Models}

\begin{aug}
\author{\large \fnms{Cheng-Der} \snm{Fuh}\ead[label=e1]{cdfuh@cc.ncu.edu.tw}}
\and
\author{\large \fnms{Alexander} \snm{Tartakovsky}\ead[label=e2]{alexg.tartakovsky@gmail.com}}
\runauthor{Fuh and Tartakovsky}

\affiliation{Graduate Institute of Statistics, National Central University, Taoyuan,  Taiwan  and \\
AGT StatConsult, Los Angeles, CA, USA}

\address{Graduate Institute of Statistics   \\
National Central University \\
Taoyuan, Taiwan  \\
\printead{e1}}
\address{AGT StatConsult \\
Los Angeles, CA, USA\\
\printead{e2}}

\end{aug}

\begin{abstract}
In the 1960s, Shiryaev developed a Bayesian theory of change-point detection in the i.i.d.\ case, which was generalized in the beginning of the 2000s by 
Tartakovsky and Veeravalli for general stochastic 
models assuming
a certain stability of the log-likelihood ratio process. Hidden Markov models represent a wide class of stochastic processes that are very useful in a variety of applications.  In this paper, 
we investigate the performance of the Bayesian Shiryaev change-point detection rule for hidden Markov models. We propose a set of regularity conditions under which the Shiryaev procedure 
is first-order asymptotically optimal in a Bayesian context, minimizing moments of the detection delay up to certain order asymptotically as the probability of 
false alarm goes to zero.  
The developed theory for hidden Markov models is based on Markov chain representation for the likelihood ratio and $r$-quick convergence for Markov random walks.
In addition, applying Markov nonlinear renewal theory, we present a high-order asymptotic approximation for the expected delay to detection of the 
Shiryaev detection rule. Asymptotic properties
of another popular change detection rule, the Shiryaev--Roberts rule, is studied as well. Some interesting examples are given for illustration.
\end{abstract}

\begin{keyword}
\kwd{Bayesian Change Detection Theory}
             \kwd{Hidden Markov Models}
              \kwd{Quickest Change-point Detection}
              \kwd{Shiryaev Procedure}
              \kwd{Shiryaev--Roberts Rule}
 \end{keyword}

\end{frontmatter}

\section{Introduction} \label{s:Intro}

Sequential change-point detection problems deal with detecting changes in a state of a random process via 
observations which are obtained sequentially one at a time. If the state is normal, then one wants to 
continue the observations. If the state changes and becomes abnormal, one is interested in 
detecting this change as rapidly as possible. In such a problem, it is always a tradeoff between false 
alarms  and a speed of detection, which have to be balanced in a reasonable way. 
A conventional  criterion is to minimize the expected delay to detection while controlling a risk associated with false detections. An optimality criterion and a solution depend heavily on what is known about the models for the observations and for the change point.

As suggested by Tartakovsky et al.~\cite{TNBbook14}, there are four main problem formulations of a sequential change-point detection problem that differ
by assumptions on the point of change and optimality criteria. In this paper, we are interested in a Bayesian criterion assuming that the change point is random with a given prior distribution.
We would like to find a detection rule that minimizes an average delay to detection, or more generally, higher moments of the detection delay in the class of rules with a given false alarm probability.
At the beginning of the 1960s, Shiryaev~\cite{ShiryaevTPA63} developed a Bayesian change-point detection theory in the i.i.d.\ case when the observations are independent with one distribution 
before the change and with another
distribution after the change and when the prior distribution of the change point is geometric.  Shiryaev found an optimal Bayesian detection rule, which prescribes  
comparing the posterior probability of the change point to a constant threshold. Throughout the paper, we refer to this detection rule  as the Shiryaev rule even in a more general non-i.i.d.\ case.  
Unfortunately, finding an optimal rule in a general 
case of dependent data does not seem feasible. The only known generalization, due to the work of Yakir~\cite{Yakir94}, 
is for the homogeneous finite-state Markov chain. Yakir~\cite{Yakir94} proved that the rule, based on thresholding the posterior probability of the change point with a random threshold that depends on the 
current state of the chain, is optimal. Since in general developing a strictly optimal detection rule is problematic, 
Tartakovsky and Veeravalli~\cite{TartakovskyVeerTVP05} considered the asymptotic problem of minimizing
the average delay to detection as the probability of a false alarm becomes small and proved that the Shiryaev rule is asymptotically optimal as long as the log-likelihood ratio process 
(between the ``change'' and ``no change'' hypotheses) has certain stability properties expressed via the strong law of large numbers and its strengthening into $r$-quick convergence. 
A general Bayesian asymptotic theory of change detection in continuous 
time was developed by Baron and Tartakovsky~\cite{BaronTartakovskySA06}. 

While several examples related to Markov and hidden Markov models were considered in \cite{BaronTartakovskySA06,TartakovskyVeerTVP05}, these are only very particular cases where the main condition on
the $r$-quick convergence of the normalized log-likelihood ratio was verified. Moreover, even these particular examples show that the verification of this condition typically represents a hard task. 
At the same time, 
there is a class of very important stochastic models -- hidden Markov models (HMM) -- that find extraordinary applications in a wide variety of fields such as speech  recognition 
\cite{juang&rabiner93,rabiner:1989}; handwritten recognition \cite{hu:brown:1996,kunda:1989};
computational molecular biology and bioinformatics, including DNA and protein modeling \cite{churchill:1989}; human activity recognition \cite{HAR};
target detection and tracking \cite{Willett,Fusion2010, WSEAS2014}; and modeling, rapid detection and tracking of malicious activity of terrorist groups \cite{RGT2013,Raghavanetal-IEEE2014}, 
to name a few. Our first goal is to focus on this 
class of models  and specify the general results of  Tartakovsky and Veeravalli~\cite{TartakovskyVeerTVP05} for HMMs, finding a set of general conditions under 
which the Shiryaev change-point detection procedure is asymptotically optimal as the probability of false alarm goes to zero. 
Our approach for hidden Markov models is based on Markov chain representation of the likelihood ratio, proposed by \citet{FuhAS03}, and $r$-quick convergence for 
Markov random walks (cf. \citet{FuhZhang00}). In addition, by making use uniform Markov renewal theory and Markov nonlinear renewal theory developed in \cite{FuhAAP04,FuhAS04}, we achieve our second goal by providing a high-order asymptotic approximation to the expected delay to detection of the Shiryaev detection rule. 
We also study asymptotic operating characteristics of the (generalized) Shiryaev--Roberts procedure in the Bayesian context. 

The remainder of the paper is organized as follows. In Section~\ref{s:Overview}, we provide a detailed overview of previous results in change-point detection and give some basic results in the general 
change detection theory for dependent data that are used in subsequent sections for developing a theory for HMMs. In Section~\ref{s:FormulationHMM}, a formulation of the problem for finite state HMMs is 
given. We develop a change-point detection theory for HMMs in Section~\ref{s:FOAO}, where we prove that under a set of quite general conditions on the finite-state HMM, 
the Shiryaev rule is asymptotically optimal
(as the probability of false alarm vanishes), minimizing moments of the detection delay up to a certain order $r\ge 1$. Section~\ref{s:SR} studies the asymptotic performance of the generalized
Shiryaev--Roberts procedure. 
In Section~\ref{s:HOAO}, using Markov nonlinear renewal theory, we provide
higher order asymptotic approximations to the average detection delay of the Shiryaev and Shiryaev--Roberts detection procedures.  
Section~\ref{s:Examples} includes a number of interesting examples that illustrate the general theory. 
Certain useful auxiliary results are given in the Appendix where we also present a simplified version of the Markov nonlinear renewal theory that helps solve our problem.

\section{Overview of the Previous Work and Preliminaries}\label{s:Overview}

Let $\{Y_n\}_{n\ge1}$ denote the series of random observations defined on the 
complete probability space $(\Omega,\Fc, \mathsf{P})$, $\Fc=\vee_{n\ge 1}\Fc_n$, where 
$\Fc_n=\sigma(Y_1,\dots,Y_n)$ is the $\sigma$-algebra generated by the observations. Let $\Ps_\infty$ and $\Ps_0$ be two probability measures defined on this 
probability space. We assume that these measures are mutually locally absolutely continuous, i.e., 
the restrictions of the measures $\Ps_0^{(n)}$ and $\Ps_\infty^{(n)}$ to the $\sigma$-algebras $\Fc_n$, $n \ge 1$ are 
absolutely continuous with respect to each other. 
Let $\Yb_0^n=(Y_0, Y_1,\dots,Y_n)$ denote the vector of the 
$n$ observations $(Y_1,\dots,Y_n)$ with an attached initial value $Y_0$ which is not a real observation but rather  an initialization generated by a ``system'' 
in order to guarantee some desired property of the observed sequence $\{Y_n\}_{n\ge 1}$. Since we will consider asymptotic behavior, this assumption will not affect our resuts.
Let $p_j(\Yb_0^n),~j=\infty,0$ 
denote densities of $\Ps_j^{(n)}$ with respect 
to a $\sigma$-finite measure. Suppose now that the observations $\{Y_n\}_{n\ge0}$ initially follow the 
measure $\Ps_\infty$ (normal regime) and at some point in time $\nu=0,1,\ldots$ something happens and 
they switch to $\Ps_0$ (abnormal regime).
For a fixed $\nu$, the change induces a probability measure $\Ps_\nu$ with density 
$p_\nu(\Yb_0^n)= p_\infty(\Yb_0^{\nu-1}) \cdot p_0(\Yb_{\nu}^n | \Yb_0^{\nu-1})$, which can also be written as
\begin{equation}\label{noniidmodel}
p_\nu(\Yb_0^n)=  \left(\prod_{i=0}^{\nu-1} p_{\infty}(Y_i|\Yb_0^{i-1})\right)\times\left( \prod_{i=\nu}^{n} p_{0}(Y_i|\Yb_0^{i-1})\right),
\end{equation}
where $p_{j}(Y_n|\Yb_0^{n-1})$ stands for the conditional density of $Y_n$ given the past 
history $\Yb_0^{n-1}$. Note that in general the conditional densities $p_0(Y_i | \Yb_0^{i-1})$, $i =\nu, \ldots, n$ may depend 
on the change point $\nu$, which is often the case for hidden Markov models. Model \eqref{noniidmodel} can cover this case as well,  
allowing $p_0^{(\nu)}(Y_i | \Yb_0^{i-1})$ to depend on $\nu$ for $i \ge \nu$. Of course the densities $p_{j}(Y_i|\Yb_0^{i-1})$ may depend on $i$.

In the present paper, we are interested in the Bayesian setting where $\nu$ is a random variable. 
In general, $\nu$ may be dependent on the observations. This situation was discussed in Tartakovsky and Moustakides~\cite{TM2010} and
Tartakovsky et al.~\cite[Sec 6.2.2]{TNBbook14} in detail, and we only summarize the idea here. 
Let $\omega_k$, $k=1,2,\dots$  be probabilities that depend on the 
observations up to time $k$, i.e.,  $\omega_k=\Ps(\nu=k|\Yb_1^k)$ for $k\ge 1$, so that the sequence $\{\omega_k\}$ is $\{\Fc_k\}$-adapted. This allows for a 
very general modeling of the change-point mechanisms, including the case where $\nu$ is  a stopping time 
adapted to the filtration $\{\Fc_n\}_{n\ge 1}$ generated by the observations (see 
Moustakides~\cite{MoustakidesAS08}). However, in the rest of this paper, we limit ourselves to the case 
where $\omega_k$ is deterministic and known. In other words, we follow the Bayesian approach proposed by 
Shiryaev~\cite{ShiryaevTPA63} assuming that $\nu$ is a random variable {\it independent} of the observations with 
a known prior distribution $\omega_k=\Ps(\nu=k)$, $\sum_{k=0}^\infty \omega_k =1$.

A sequential change-point detection rule is a stopping time $T$ adapted to the filtration 
$\{\Fc_n\}_{n\ge 1}$, i.e., $\{T\le n\} \in \Fc_n$, $n\ge1$. To avoid triviality we always assume that  $T\ge 1$ with probability 1. 

Define the probability measure 
$\Ps^{\omega}(\Ac)=\sum_{k=0}^\infty \,\omega_{k}\, \Ps_{k}(\Ac)$ and let $\E^\omega$ 
stand for the expectation with respect to $\Ps^{\omega}$.
The false alarm risk is usually measured by the (weighted) probability of false alarm $\PFA(T)=\Ps^{\omega}(T <\nu)$. Taking into account that $T>0$ with probability\ 1 and $\{T < k\}\in\Fc_{k-1}$, we obtain
\begin{equation} \label{PFAdeforig}
\PFA(T)=\sum_{k=0}^\infty \omega_k \Ps_k(T< k) =\sum_{k=1}^\infty \omega_k  \Ps_\infty(T < k).
\end{equation}
Usually the speed of detection is expressed by the average detection delay (ADD)
\begin{equation} \label{ADDdef}
\ADD(T)=\E^\omega(T-\nu|T \ge \nu)= \frac{\E^\omega(T-\nu)^+}{\Ps^\omega(T \ge \nu)} = \frac{\sum_{k=0}^\infty \omega_k \Ps_\infty(T\ge  k)\E_k(T-k|T\ge  k)}{\sum_{k=0}^\infty \omega_k \Ps_\infty(T\ge  k)} .
\end{equation}
Since for any finite with probability\ $1$ stopping time,
$\sum_{k=0}^\infty \omega_k \Ps_\infty(T \ge k) = 1- \Ps^{\omega}(T < \nu)$, it follows from \eqref{ADDdef} that
 \[
 \ADD(T) =\frac{\sum_{k=0}^\infty \omega_k \Ps_\infty(T\ge  k)\E_k(T-k|T\ge  k)}{1-\PFA(T)} .
 \]

An optimal Bayesian detection scheme is a rule for which the $\ADD$ is minimized in the class of rules  $\classA = \{T: \PFA (T) \le \alpha\}$  with the $\PFA$ constrained to be below a 
given level $\alpha\in(0,1)$, i.e., the optimal change-point detection rule is the stopping time $T_{\rm opt}=\arginf_{T\in\classA}\ADD(T)$.
Shiryaev~\cite{ShiryaevTPA63} considered the case of $\nu$ with a zero-modified geometric distribution
\begin{equation}\label{Geom}
\begin{aligned}
\Ps(\nu=0) = \omega_0, \quad \Ps(\nu=k) =(1-\omega_0)\rho
(1-\rho)^{k-1}, ~ k \ge 1,
\end{aligned}
\end{equation}
where  $\omega_0 \in[0,1)$, $\rho \in (0,1)$.  Note that when $\alpha\ge1-\omega_0$, there is a trivial solution since we can stop at $0$.  
Thus, in the following,  we assume that  $\alpha< 1-\omega_0$.
Shiryaev~\cite{ShiryaevTPA63, ShiryaevBook78} proved that in the i.i.d.~case (i.e., when $p_{j}(Y_n|\Yb_0^{n-1})= p_{j}(Y_n)$ for $j=0,\infty$ in \eqref{noniidmodel}) the optimal Bayesian
detection procedure $T_{\rm opt}=\Ts(h)$  exists and has the form
\begin{equation} \label{Shirst}
\Ts(h)=\inf\{n\ge 1: \Ps(\nu \le  n |\Fc_n) \ge h\},
\end{equation}
where threshold $h=h_\alpha$ is chosen to satisfy $\PFA(\Ts(h_\alpha))=\alpha$.

Consider a general non-i.i.d.\ model \eqref{noniidmodel} and a general, not necessarily geometric prior distribution $\Ps(\nu=0)=\omega_0$, $\Ps(\nu=k) = \omega_k= (1-\omega_0) \tilde{\omega}_k$ for $k\ge 1$, where $\tilde{\omega}_k=\Ps(\nu=k|\nu>0)$, $\sum_{k=1}^\infty \tilde{\omega}_k=1$. Write
$\Lambda_i=p_0(Y_i|\Yb_0^{i-1})/p_\infty(Y_i | \Yb_0^{i-1})$ as the conditional likelihood ratio for the $i$-th sample. 
We take a convention that $\Lambda_0$ can be any random or deterministic number, in particular $1$ if the initial value $Y_0$ is not available, 
i.e., before the observations become available we have no information except for the prior probability $\omega_0$.

Applying the Bayes formula, it is easy to see that
\begin{equation}
\Ps(\nu \le  n|\Fc_n) = \frac{R_{n,\omega}}{R_{n,\omega} + 1},
\label{post_R}
\end{equation}
where
\begin{equation} \label{Rnp}
\begin{aligned}
R_{n,\omega} = \frac{\omega_0 \prod_{i=1}^n \Lambda_i + \sum_{k=1}^n \omega_k \prod_{i=k}^n \Lambda_i}{\Ps(\nu > n)}
=  \sum_{k=0}^n \omega_{k,n} \prod_{i=k}^n \Lambda_i,
\end{aligned}
\end{equation}
with
\begin{equation}\label{wkn}
\omega_{k,n}= \frac{\omega_k}{\sum_{j=n+1}^\infty \omega_j} =  \begin{cases}
\frac{\tilde{\omega}_k}{\sum_{j=n+1}^\infty \tilde{\omega}_j}  & \text{for} ~ k \ge 1, \\
\frac{\omega_0}{(1-\omega_0) \sum_{j=n+1}^\infty \tilde{\omega}_j}  & \text{for} ~ k =0 .
\end{cases}
\end{equation}
If $\Lambda_0=1$,  then $R_{0,\omega} = \omega_{0,0}=\omega_0/(1-\omega_0)$.

It is more convenient to rewrite the stopping time \eqref{Shirst} in terms of the statistic $R_{n,\omega}$, i.e.,
\begin{equation} \label{Shirst1}
T_A = \inf\set{n \ge 1: R_{n,\omega} \ge A},
\end{equation}
where $A=h/(1-h)>{\omega_0}/{(1-\omega_0)}$.  Note that $\prod_{i=k}^n \Lambda_i$ is the likelihood ratio between the hypotheses $\Hyp_k: \nu=k$ 
that a change occurs at the point $k$ and $\Hyp_\infty: \nu=\infty$ (no-change). Therefore, $R_{n,\omega}$ can be interpreted as an average (weighted) likelihood ratio.

Although for general non-i.i.d.\ models no exact optimality properties are available (similar to the i.i.d.~case), there exist asymptotic results.
Define the exponential rate of convergence $c$ of the prior distribution,
\begin{equation} \label{expon}
c =-\lim_{k\to\infty}\frac{\log \Ps(\nu >  k)}{k},\quad \text{$c \ge 0$},
\end{equation}
assuming that the corresponding limit exists. If $c >0$, then the prior distribution has (asymptotically) exponential right tail. If  $c=0$, then this amounts to a heavy tailed distribution. 
Note that for the geometric distribution \eqref{Geom},  $c=-\log (1-\rho)$.

To study asymptotic properties of change-point detection rules  we need the following definition.

\begin{definition}
Let, for $\varepsilon >0$,
\[
\tau_\varepsilon = \sup\{ n \ge 1: |\xi_n| > \varepsilon\} \quad (\sup\{\varnothing\} = 0)
\]
be the last entry time of a sequence $\{\xi_n\}_{n \ge 1}$ into the region $(-\infty, -\varepsilon] \cup [\varepsilon,\infty)$, i.e., the last time after which
$\xi_n$ leaves the interval $[-\varepsilon,\varepsilon]$. It is said that $\xi_n$ converges $r-$quickly to 0 as $n \to\infty$ if $\E (\tau_\varepsilon)^r < \infty$ for all $\varepsilon >0$ and some $r>0$ 
(cf. \cite{Lairquick,TNBbook14}). \end{definition}

The last entry time $\tau_\varepsilon$ plays an important role in the strong law of large numbers (SLLN). Indeed, $\Ps(\tau_\varepsilon < \infty)=1$ for all $\varepsilon >0$ implies that $\xi_n \to 0$ $\Ps$-a.s. as $n \to \infty$. Also, by Lemma 2.2 in \cite{TarSISP98}, $\E (\tau_\varepsilon)^r < \infty$ for all $\varepsilon >0$ and some $r>0$ implies
\[
\sum_{n=1}^\infty n^{r-1} \Ps(|\xi_n| > \varepsilon) < \infty,
\]
which defines the rate of convergence in the strong law.  If $\xi_n = (Y_1+\cdots+Y_n)/n$ and $Y_1,\dots,Y_n$ are i.i.d. zero-mean, then the necessary and sufficient condition for the $r-$quick convergence is the finiteness of the $(r+1)$th moment, $\E |Y_1|^{r+1} < \infty$. To study the first order asymptotic optimality
of the Shiryaev change-point detection rule in HMM, we will extend this idea to Markov chains.

Let $S_n^k$ denote the log-likelihood ratio between the hypotheses $\Hyp_k$ and $\Hyp_\infty$,
\begin{equation} \label{LLRgen}
S_n^{k} = \log \brc{\prod_{i=k}^n \Lambda_i }= \sum_{i=k}^n \log\frac{p_{0}(Y_{i}|\Yb_0^{i-1})}{p_{\infty}(Y_{i}|\Yb_0^{i-1})}, \quad k \le n.
\end{equation}
Assuming, for every $k>0$, the validity of a strong law of large numbers, i.e., convergence of $n^{-1} S_{k+n-1}^k$ to a constant $\Kc >0$ as $n\to\infty$, with a suitable rate, 
Tartakovsky and Veeravalli~\cite{TartakovskyVeerTVP05} 
proved that the Shiryaev procedure \eqref{Shirst1} with threshold $A=(1-\alpha)/\alpha$ is first-order asymptotically (as $\alpha\to 0$) optimal. Specifically,  they proved that the Shiryaev procedure minimizes asymptotically as $\alpha\to0$ in class $\classA$ the moments of the detection delay $\E[(T-\nu)^m | T\ge \nu]$ for $m\le r$  whenever $n^{-1} S_{k+n-1}^k$ converges to $\Kc$ $r-$quickly. 
Since this result is fundamental in the following study for HMMs, we now present an exact statement that summarizes the general asymptotic Bayesian theory. Recall that
$T_A$ denotes the Shiryaev change-point detection rule defined in \eqref{Shirst1}. It is easy to show that for an arbitrary general model, $\PFA(T_A) \le 1/(1+A)$ (cf. \cite{TartakovskyVeerTVP05}). Hence, selecting
$A=A_\alpha=(1-\alpha)/\alpha$ implies that $\PFA(T_{A_\alpha}) \le \alpha$ (i.e., $T_{A_\alpha}\in\classA$) for any $\alpha < 1-\omega_0$.
For $\varepsilon>0$, define the last entry time
\begin{equation}\label{LETLLRgen}
\tau_{\varepsilon, k} = \sup\set{n \ge 1: \abs{\frac{1}{n} S_{k+n-1}^k - \Kc} >  \varepsilon} \quad (\sup\{\varnothing\} = 0).
\end{equation}

\begin{theorem}[Tartakovsky and Veeravalli~\cite{TartakovskyVeerTVP05}]\label{Th:FOAOgen}
Let $r\ge 1$. Let the prior distribution of the change point satisfy condition \eqref{expon} and, in the case of $c=0$, let in addition $\sum_{k=0}^\infty |\log \omega_k|^{r} \omega_k<\infty$. 
Assume that $n^{-1}S_{k+n-1}^k$ converges $r-$quickly as $n\to\infty$ under $\Ps_k$ to some positive and finite number $\Kc$, 
i.e., $\E_k(\tau_{\varepsilon, k})^r<\infty$ for all $\varepsilon>0$ and all $k\ge 1$, and that 
\begin{equation}\label{rquickgen}
\sum_{k=0}^\infty \omega_k \E_k(\tau_{\varepsilon, k})^r  <\infty \quad \text{for all}~ \varepsilon >0 .
\end{equation}  

\noindent {\rm \bf (i)} Then for all $m \le r$
\begin{equation} \label{Momentsgen1}
\lim_{A\to\infty}  \frac{\E^\omega[(T_A-\nu)^m | T_A \ge \nu]}{(\log A)^m} =\frac{1}{(\Kc+c)^m}.
\end{equation}

\noindent {\rm \bf (ii)}   If  threshold $A=A_\alpha$ is selected so that $\PFA(T_{A_\alpha}) \le \alpha$ and $\log A_\alpha\sim |\log\alpha|$ as $\alpha\to0$, in particular $A_\alpha=(1-\alpha)/\alpha$, then the Shiryaev rule
is asymptotically optimal as $\alpha\to0$ in class $\classA$ with respect to moments of the detection delay up to order $r$, i.e., for all $0<m \le r$,
\begin{equation} \label{MADDAOgen}
\E^\omega[(T_{A_\alpha}-\nu)^m | T_{A_\alpha} \ge \nu] \sim  \brc{\frac{|\log\alpha|}{\Kc+c}}^m \sim \inf_{T \in \classA} \E^\omega[(T-\nu)^m | T\ge \nu]  \quad \text{as} ~\alpha \to 0.
\end{equation}
\end{theorem}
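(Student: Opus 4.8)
The plan is to prove the result through a matching pair of bounds, with the constant $\Kc + c$ arising as the sum of the asymptotic drift $\Kc$ of the log-likelihood ratio $S_{k+n-1}^k$ and the exponential tail rate $c$ of the prior, the latter entering through the normalization $\Ps(\nu > n) = e^{-(c+o(1))n}$ in the weights $\omega_{k,n}$ from \eqref{wkn}. First I would establish an achievability (upper) bound for the Shiryaev rule $T_A$, and then a converse (lower) bound valid over the whole class $\classA$; combining the two yields both the exact limit in (i) and the asymptotic optimality in (ii). Throughout I treat the geometric-type prior with $c > 0$ as the leading case and indicate where the extra hypothesis $\sum_k |\log\omega_k|^r \omega_k < \infty$ is needed to cover $c = 0$.

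For the achievability bound I would exploit the elementary estimate $R_{n,\omega} \ge \omega_{k,n} e^{S_n^k}$, obtained by keeping only the $k$-th summand in \eqref{Rnp}. Consequently, under $\Ps_k$ the rule $T_A$ is dominated pathwise by the one-sided first passage time $\wtT_{A,k} = \inf\set{n \ge k : S_n^k + \log\omega_{k,n} \ge \log A}$, so that $(T_A - k)^+ \le \wtT_{A,k} - k$. Since $\log\omega_{k,n} = \log\omega_k - \log\Ps(\nu > n) \approx \log\omega_k + c\,n$, the boundary is crossed once $S_n^k + c\,n$ exceeds $\log A - \log\omega_k$; as $n^{-1}S_{k+n-1}^k \to \Kc$ this occurs at $n - k \approx \log A/(\Kc + c)$. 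The $r$-quick convergence hypothesis, expressed through the last-entry times $\tau_{\varepsilon,k}$ of \eqref{LETLLRgen}, is precisely the tool that upgrades this heuristic into a moment bound $\E_k(\wtT_{A,k} - k)^m \le (\log A/(\Kc+c))^m (1 + o(1))$ for every $m \le r$, and condition \eqref{rquickgen} lets me average against $\omega_k$ and pass to $\E^\omega[(T_A - \nu)^m \mid T_A \ge \nu]$. When $c = 0$ the term $-\log\omega_k$ no longer cancels against $ck$, and controlling its contribution to the moment is exactly what $\sum_k |\log\omega_k|^r \omega_k < \infty$ provides.

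The converse is the heart of the argument and the step I expect to be the main obstacle. Fix $0 < \varepsilon < 1$, set $L_\alpha = (1-\varepsilon)|\log\alpha|/(\Kc + c)$, and aim to show that detection within $L_\alpha$ of the change is asymptotically impossible for every $T \in \classA$, i.e. $\Ps^\omega(0 \le T - \nu < L_\alpha \mid T \ge \nu) \to 0$. The key is a change of measure: since $\d\Ps_k/\d\Ps_\infty$ on $\Fc_n$ equals $e^{S_n^k}$ for $n \ge k$, I would write, for $b = (\Kc + \delta)L_\alpha$ with $\delta$ small,
\[
\Ps_k(k \le T < k + L_\alpha) \le e^{b}\,\Ps_\infty(T < k + L_\alpha) + \Ps_k\Bl(\max_{0 \le j < L_\alpha} S_{k+j}^k > b\Br).
\]
Averaging against $\omega_k$, the first term is handled by reindexing and the prior tail \eqref{expon}: $\sum_k \omega_k \Ps_\infty(T < k + L_\alpha) \le e^{(c+o(1)) L_\alpha}\sum_j \omega_j \Ps_\infty(T < j) = e^{(c+o(1)) L_\alpha}\PFA(T) \le e^{(c+o(1)) L_\alpha}\alpha$, so this contribution is at most $\alpha\, e^{(\Kc + c + \delta+o(1))L_\alpha} = \alpha^{\,\varepsilon - O(\delta)} \to 0$ once $\delta$ is small relative to $\varepsilon$. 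The deviation term is where the $r$-quick hypothesis re-enters: the event $\set{\max_{j < L_\alpha} S_{k+j}^k > (\Kc + \delta)L_\alpha}$ forces $\tau_{\delta',k}$ to be of order $L_\alpha$, so by Markov's inequality it is bounded by $\E_k(\tau_{\delta',k})^r/(\mathrm{const}\cdot L_\alpha)^r$, and \eqref{rquickgen} with dominated convergence drives $\sum_k \omega_k \Ps_k(\cdots) \to 0$.

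Having established $\Ps^\omega(0 \le T - \nu < L_\alpha \mid T \ge \nu) \to 0$, the moment lower bound follows from the elementary inequality $\E^\omega[(T - \nu)^m \mid T \ge \nu] \ge L_\alpha^m\,\Ps^\omega(T - \nu \ge L_\alpha \mid T \ge \nu)$, and letting $\varepsilon \downarrow 0$ yields $\inf_{T \in \classA}\E^\omega[(T-\nu)^m \mid T \ge \nu] \ge (|\log\alpha|/(\Kc+c))^m(1 - o(1))$. To finish, I would combine the two bounds: applying the converse to $T_A$ itself, which is legitimate since $\PFA(T_A) \le 1/(1+A)$ so that $T_A \in \classA$ with $|\log\alpha| \sim \log A$, matches the achievability bound and gives the exact limit \eqref{Momentsgen1} of part (i); and taking $A_\alpha = (1-\alpha)/\alpha$, for which $\log A_\alpha \sim |\log\alpha|$ and $T_{A_\alpha} \in \classA$, shows that the Shiryaev rule attains the lower bound, which is \eqref{MADDAOgen} of part (ii). The delicate points throughout are the uniformity in $k$ needed to exchange summation with the limit, and the precise bookkeeping of the prior normalization so that the two rates combine into $\Kc + c$ rather than $\Kc$ alone.
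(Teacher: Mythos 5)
Your overall architecture --- an achievability bound for $T_A$ via the one-sided crossing time of $S_n^k+\log\omega_{k,n}$ controlled by the last-entry times $\tau_{\varepsilon,k}$, plus a class-wide converse via change of measure and a maximal-deviation term, the two halves combined to give both the exact limit in (i) and the optimality in (ii) --- is exactly the architecture of the cited Tartakovsky--Veeravalli proof. The paper does not reprove Theorem~\ref{Th:FOAOgen} itself, but it replays precisely this scheme in the proofs of Theorem~\ref{Th:th2} and Lemma~\ref{Lem:A1}: compare your $\wtT_{A,k}$ with $\eta_A(k)$ there, and your two-term converse decomposition with \eqref{ProbTle}. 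However, two steps in your converse would fail as written.

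First, the reindexing bound $\sum_k\omega_k\Ps_\infty(T<k+L_\alpha)\le e^{(c+o(1))L_\alpha}\,\PFA(T)$ implicitly requires $\omega_k\le e^{(c+o(1))L_\alpha}\,\omega_{k+L_\alpha}$ term by term, which does not follow from the tail condition \eqref{expon} (individual masses $\omega_{k+L_\alpha}$ may even vanish). The standard fix, visible in \eqref{Apkin}, bounds each term separately: since $j\mapsto\Ps_\infty(T<j)$ is nondecreasing, $\PFA(T)\ge\Ps_\infty(T<k+L_\alpha)\,\Ps(\nu\ge k+L_\alpha)$, hence $\Ps_\infty(T<k+L_\alpha)\le\alpha/\Ps(\nu\ge k+L_\alpha)$, and \eqref{expon} controls $-\log\Ps(\nu\ge k+L_\alpha)$. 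Second, your treatment of the deviation term is incorrect: the event $\bigl\{\max_{0\le j<L_\alpha}S_{k+j}^k>(\Kc+\delta)L_\alpha\bigr\}$ does \emph{not} force $\tau_{\delta',k}$ to be of order $L_\alpha$ --- the maximum may be attained at a small index $j^*$, in which case one only learns $\tau_{\delta',k}\ge j^*$, and Markov's inequality applied to $\tau_{\delta',k}$ yields nothing. What is true, and is all that is needed, is that $\beta_k(\alpha,\varepsilon)=\Ps_k\bigl(\max_{1\le n\le M}S_{k+n-1}^k\ge(1+\varepsilon)\Kc M\bigr)\to0$ as $M\to\infty$ for each fixed $k$, which follows from the almost sure SLLN alone (itself implied by the $r$-quick hypothesis); the weighted sum $\sum_k\omega_k\beta_k(\alpha,\varepsilon)$ then vanishes by dominated convergence, or by truncating the prior at $K_{\alpha,\varepsilon}$ exactly as in Lemma~\ref{Lem:A1}. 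With these two repairs your argument goes through; the achievability half, including the role of $\sum_k|\log\omega_k|^r\omega_k<\infty$ when $c=0$ and the deduction of part (i) by applying the converse to $T_A\in\classA$ with $\alpha=1/(1+A)$, is correct as proposed.
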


\begin{remark}
The assertions of Theorem~\ref{Th:FOAOgen} also hold true if the $r-$quick convergence condition \eqref{rquickgen} is replaced by the following two conditions: 
\begin{equation*}
\lim_{M\to\infty} \Ps_k\brc{\frac{1}{M} \max_{1 \le n \le M}S_{k+n-1}^k \ge (1+\varepsilon) \Kc} = 0 \quad \text{for all}~ \varepsilon >0~ \text{and all}~ k \ge 1
\end{equation*}  
and
\begin{equation*}
\sum_{k=0}^\infty \omega_k \E_k(\hat\tau_{\varepsilon, k})^r <\infty \quad \text{for all}~ \varepsilon >0 ,
\end{equation*}  
where $\hat\tau_{\varepsilon, k} = \sup\set{n \ge 1: n^{-1} S_{k+n-1}^k <   \Kc-\varepsilon}$ ($\sup\{\varnothing\} = 0)$. Condition  \eqref{rquickgen} guarantees both these conditions. 
\end{remark}

The first goal of the present paper is to specify these results for hidden Markov models. That is, we prove that the assertions of the above theorem hold for HMMs under some regularity conditions.
Moreover, by making use the specific structure of HMM and Markov nonlinear 
renewal theory, we also give the higher order asymptotic approximation to $\ADD(T)$. This is also our second goal.

\section{Problem Formulation for Hidden Markov Models}\label{s:FormulationHMM}

In this section, we define a finite state hidden 
Markov model as a Markov chain in a Markovian random environment, in which the underlying
environmental Markov chain can be viewed as latent variables. To be
more precise, let ${\bf X}= \{X_n, n \geq 0 \}$ be an ergodic (positive recurrent, irreducible and aperiodic) Markov chain on
a finite state space $D  = \{1, 2, \cdots, d\}$ with transition
probability matrix $ [p(x,x')]_{x,x'=1,\cdots,d}$ 
and stationary distribution $\pi=(\pi(x))_{x=1,\cdots,d}$. Suppose that a random sequence
$\{Y_n\}_{n=0}^{\infty},$ taking values in ${\bf R}^m$, is adjoined
to the chain such that $\{(X_n,Y_n), n \geq 0\}$ is a Markov chain
on $D \times {\bf R}^m$ satisfying $\Ps \{ X_1 \in A |
X_0=x,Y_0=y_0 \} = \Ps \{ X_1 \in A | X_0=x \}$ for $A \in 
{\mathscr B}(D)$, the Borel $\sigma$-algebra of $D$. Moreover, conditioning
on the full ${\bf X}$ sequence, we have
\begin{eqnarray} \label{ssm}
 \Ps \{Y_{n+1} \in  B | X_0,X_1,\ldots;Y_0,Y_1,\ldots, Y_n \}
= \Ps \{Y_{n+1} \in  B | X_{n+1}, Y_n \}
\end{eqnarray}
for each $n$ and $B \in {\mathscr B}({\bf R}^m),$ the Borel
$\sigma$-algebra of ${\bf R}^m$. Furthermore, 
let $f(\cdot|X_k, Y_{k-1})$ be the
transition probability density of $Y_k$ given $X_k$ and $Y_{k-1}$ with respect
to a $\sigma$-finite measure $Q$ on ${\bf R}^m$, such that
\begin{eqnarray}\label{ssmden}
 \Ps \{X_1 \in A, Y_{1} \in  B | X_0=x, Y_0=y  \}
= \sum_{x' \in A} \int_{y' \in B} p(x,x') f(y'|x',y)  Q(\d y'),
\end{eqnarray}
for $B \in{\mathscr B}({\bf R}^m)$.  We also assume that
the Markov chain $\{(X_n,Y_n), n \ge 0\}$ has a stationary
probability $\Gamma$ with probability density function $\pi(x)f(\cdot|x)$. 

Now we give a formal definition of the hidden Markov model.

\begin{definition}\label{def1}
The sequence $\{Y_n,n \ge 0\}$ is called a finite state hidden Markov model
(HMM) if there is an unobserved Markov chain $\{X_n,n \ge 0\}$
such that the process $\{(X_n,Y_n),n \ge 0\}$ satisfies {\rm
(\ref{ssm}) and (\ref{ssmden})}. 
\end{definition}

We are interested in the change-point detection problem for the HMM, which is of course a particular case of the general stochastic
model described in  \eqref{noniidmodel}. In other words, for 
$j =\infty, 0$, let $p_j(x,x')$ be the transition probability, 
$\pi_j(x)$ be the stationary probability, and $f_j(y'|x,y)$ be the 
transition probability density of the HMM in Definition~\ref{def1}. In the change-point problem, we suppose that the conditional density $f(y'|x,y)$
and the transition probability $p(x,x')$ change at an unknown time $\nu$ from $(p_\infty,f_\infty)$ to $(p_0,f_0)$.

Let $\Yb_0^n=(Y_0,Y_1,\ldots,Y_n)$ be the sample obtained from the HMM $\{Y_n, n \ge 0\}$ and denote
\begin{eqnarray}\label{LRn}
~~~LR_n  &:=& \frac{p_0(Y_0,Y_{1},\cdots,Y_n)}
{p_\infty(Y_0,Y_{1},\cdots,Y_n)}    \\
 &=& \frac{\sum_{x_0 \in D, \ldots, x_n \in D} \pi_0(x_0)f_0(Y_0|x_0)
   \prod_{l=1}^n  p_0(x_{l-1},x_l) f_0(Y_l|x_l,Y_{l-1})}
{\sum_{x_0 \in D, \ldots, x_n \in D} \pi_\infty(x_0)f_\infty(Y_0|x_0)
   \prod_{l=1}^n  p_\infty(x_{l-1},x_l) f_\infty(Y_l|x_l,Y_{l-1})} \nonumber
\end{eqnarray}
as the likelihood ratio. By \eqref{noniidmodel}, for $0 \le k \le n$,  the likelihood ratio of the hypothesis $\Hyp_k: \nu=k$ against $\Hyp_\infty: \nu=\infty$
for the sample $\Yb_0^n$ is given by  
\begin{equation} \label{LRkYn}
LR_n^k := \frac{p_k(\Yb_0^n)}{p_\infty(\Yb_0^n)} =  \frac{p_0(\Yb_k^n|\Yb_0^{k-1})}{p_\infty(\Yb_k^n|\Yb_0^{k-1})}  
= \prod_{i=k}^n\Lambda_i,
\end{equation}
where $\Lambda_i=p_0(Y_i|\Yb_0^{i-1})/p_\infty(Y_i|\Yb_0^{i-1})$.

Recall that in Section~\ref{s:Overview} we assumed that only the sample $\Yb_1^n=(Y_1,\dots,Y_n)$ can be observed and 
the initial value $Y_0$ is used for producing the observed sequence $\{Y_n, n =1,2,\dots\}$ with the desirable property. 
The initialization $Y_0$ affects the initial value of the likelihood ratio, $LR_0=\Lambda_0$, which can be either random or deterministic. 
In turn, this influences the behavior of $LR_n$ for $n \ge 1$.   Using the sample $\Yb_0^n$ in \eqref{LRn} and \eqref{LRkYn}  is convenient for Markov and hidden 
Markov models which can be initialized either randomly or deterministically.  If  $Y_0$ cannot be observed (or properly generated), then we assume $\Lambda_0=LR_0=1$, which is  equivalent to $f_0(Y_0|x)/f_\infty(Y_0|x)=1$ for all $x \in D$ in \eqref{LRn}. This is also the case when the change cannot occur before the observations become 
available, i.e., when $\omega_0=0$.

Of course, the probability measure (likelihood ratio) defined in (\ref{LRkYn}) is  one of several possible ways of representing the LR, when the change occurs at time $\nu=k.$ For instance,
when the post-change hidden state $X_{k}$ comes from
the pre-change hidden state $X_{k-1}$ with new transition probability, then the joint
marginal $\Ps_{k}$-distribution of $\Yb_0^n$  (with $n \ge k$) becomes
\begin{eqnarray}\label{LRnk}
LR_n^k  &:=& \frac{p_{0}(\Yb_0^n) / p_{0}(\Yb_0^{k-1})}{p_{\infty}(\Yb_0^n) /  p_{\infty}(\Yb_0^{k-1})} \nonumber  \\
  &\approx& \frac{\sum_{x_k \in D, \ldots, x_n \in D} \pi_0(x_k) f_0(Y_k|x_k) \prod_{l=k+1}^{n}   p_0(x_{l-1},x_l) f_0(Y_l|x_l,Y_{l-1})} 
  {\sum_{x_k \in D, \ldots, x_n \in D} \pi_\infty(x_k)f_\infty(Y_k|x_k)   
  \prod_{l=k+1}^{n}  p_\infty(x_{l-1},x_l) f_\infty(Y_l|x_l,Y_{l-1})}.
\end{eqnarray}
Note that the first equation in (\ref{LRnk}), the joint
marginal distributions formulation, is an alternative expression of (\ref{LRkYn}). 
In the second equation of (\ref{LRnk}), we approximate $p_j(x_{k-1},x_k) f_j(Y_k|x_k,Y_{k-1})$ by the associated stationary distribution $ \pi_j(x_k) f_j(Y_k|x_k)$ for  $j=\infty, 0$.

\begin{remark}
 As noted in Section~\ref{s:Intro}, in this paper, we consider the 
case of $\nu$ independent of the observed random variables. In the case that $\nu$ depends on the observed random variables,
a device of using pseudo-probability measures can be found in Fuh and Mei \cite{FuhMei15}.
\end{remark}

In the following sections, we 
investigate the Shiryaev change point detection rule defined in \eqref{Rnp} and \eqref{Shirst1}. We now give certain preliminary results required for this study.
Since the detection statistic 
$R_{n,\omega}$ involves $LR_n^k$ defined in (\ref{LRkYn}) and 
(\ref{LRnk}), we explore the structure of the likelihood ratio
$LR_n$ in (\ref{LRn}) first.
For this purpose, we represent (\ref{LRn}) as the ratio of
$L_1$-norms of products of Markovian random matrices. This
device has been proposed by Fuh \cite{FuhAS03} to study a recursive CUSUM change-point detection procedure in HMM. 
Here we carry out the same idea
to have a representation of the likelihood ratio $LR_n$.
Specifically, given a column vector $u=(u_1,\cdots,u_d)^t \in 
{\bf R}^d$, where $t$ denotes the transpose of the underlying 
vector in ${\bf R}^d$, define the $L_1$-norm
of $u$ as $\| u \|  = \sum_{i=1}^d |u_i|$.
The likelihood ratio $LR_n$ then can be represented as 
\begin{eqnarray}\label{proc}
LR_n= \frac{p_0(Y_0, Y_1,\cdots,Y_n)}{p_\infty(Y_0,Y_1,\cdots,Y_n)}
=\frac{ \| M^0_n \cdots M^0_1 M^0_0 \pi_0\|}
{ \| M^\infty_n \cdots M^\infty_1 M^\infty_0 \pi_\infty\|},
\end{eqnarray}
where 
\begin{equation}\label{rm1}
M^j_0 = \left[ \begin{array}{cccc}
f_j(Y_0|X_0=1) & 0 & \cdots & 0 \\
\vdots  &\ddots & \vdots  & \vdots\\
0 & 0 & \cdots & f_j(Y_0|X_0=d) \end{array} \right], 
\end{equation}
\begin{equation}\label{rm2}
M^j_k= \left[ \begin{array}{ccc}
p_j(1,1)f_j(Y_k|X_{k}=1,Y_{k-1}) & \cdots &
p_j(d,1)f_j(Y_k|X_{k}=1,Y_{k-1})\\
\vdots & \ddots  & \vdots\\
p_j(1,d)f_j(Y_k|X_{k}=d,Y_{k-1}) & \cdots &
p_j(d,d)f_j(Y_k|X_{k}=d,Y_{k-1}) \end{array} \right]
\end{equation}
for $j=0,\infty$, $k=1,\cdots,n$, and
\begin{eqnarray}
 \pi_j = \big(\pi_j(1),\cdots,\pi_j(d) \big)^t.
\end{eqnarray}

Note that each component $p_j(x,x')f_j(Y_k|X_{k}=x',Y_{k-1})$ 
in $M_k^j$ represents $X_{k-1} =x$ and $X_k=x'$, and $Y_k$ is a 
random variable with transition probability density $f_j(Y_k|x',Y_{k-1})$
for $j=0,\infty$ and $k=1,\cdots,n$. Therefore the $M_k^j$ are random
matrices. Since $\{(X_n,Y_n), n \ge 0 \}$ is a Markov chain
by definition (\ref{ssm}) and (\ref{ssmden}), this implies that 
$\{M_k^j,k =1,\cdots,n\}$ is a sequence of Markov random matrices 
for $j=0,\infty.$ 
Hence, $LR_n$ is the ratio of the $L_1$-norm of the products of Markov random matrices via 
representation (\ref{proc}). Note that $\pi_j(\cdot)$ is fixed in (\ref{proc}).

Let $\{(X_n,Y_n), n \ge 0 \}$ be the Markov chain defined in (\ref{ssm}) and (\ref{ssmden}).
Denote $Z_n :=(X_n,Y_n)$ and $\bar{D}:=D \times {\bf R}^m$. 
Define $Gl(d,{\bf R})$ as the set of invertible $d \times d$ matrices with real entries.  
For given $k=0,1,\cdots,n$ and $j=\infty, 0$, let $M_k^j$ be the random matrix from
$\bar{D} \times \bar{D}$ to $Gl(d,{\bf R})$, as defined in (\ref{rm1}) and (\ref{rm2}). 
For each $n$, let
\begin{eqnarray} \label{rfs}
 {\bf M}_n =  ({\bf M}_n^\infty, {\bf M}_n^0) 
= \big(M^\infty_n \circ \cdots \circ M^\infty_1\circ M^\infty_0,
M^0_n \circ \cdots \circ M^0_1 \circ M^0_0\big) .
\end{eqnarray}
Then the system $\{(Z_n,{\bf M}_n), n \ge 0 \}$ is
called a product of Markov random matrices on $\bar{D} \times Gl(d,{\bf R})\times Gl(d,{\bf R})$.
Denote ${\Ps}^z$ as the probability distribution of $\{(Z_n,{\bf M}_n), n \ge 0 \}$ 
with $Z_0 =z$, and ${\E}^z$ as the expectation under ${\Ps}^z$.

Let $u \in {\bf R}^d$ be a $d$-dimensional vector, $\overline{u}:=u/||u||$ the normalization of 
$u$ ($||u|| \neq 0$), and denote $P({\bf R}^d)$ as the projection space of ${\bf R}^d$
which contains all elements $\overline{u}$.
For given $\overline{u} \in P({\bf R}^d)$ and $M \in Gl(d,{\bf R})$, denote
$M \cdot \overline{u} = \overline{Mu}$ and $\overline{{\bf M}_k u} = 
\big(\overline{{\bf M}_k^\infty u}, \overline{{\bf M}_k^0 u}\big)$, for $k=0,\cdots,n$. Let
\begin{eqnarray}\label{tk}
W_0 = ( Z_0, \overline{{\bf M}_0u}),~W_1 =(Z_1, \overline{{\bf M}_1 u}), \cdots,
W_n = ( Z_n ,\overline{{\bf M}_n u}).
\end{eqnarray}
Then, $\{W_n, n \ge 0\}$ is a Markov chain on the state space
$\bar{D} \times P({\bf R}^d)\times P({\bf R}^d)$ with the transition kernel
\begin{eqnarray}\label{tk1}
{\Ps}((z,\overline{u}),A \times B):={\E}^z (I_{A \times B} (Z_1,\overline{M_1 u}))
\end{eqnarray}
for all $z \in \bar{D},~\overline{u}:=(\overline{u}, \overline{u})\in P({\bf R}^d) 
\times P({\bf R}^d),~A \in {\cal B}(\bar{D})$,
and $B \in {\cal B}(P({\bf R}^d)\times P({\bf R}^d))$, the $\sigma$-algebra of 
$P({\bf R}^d)\times P({\bf R}^d)$. For simplicity, we let 
${\Ps}^{(z,\overline{u})}:={\Ps}(\cdot,\cdot)$ and
denote ${\E}^{(z,\overline{u})}$ as the expectation under ${\Ps}^{(z,\overline{u})}$.
Since the Markov chain $\{(X_n,Y_n), n \ge 0\}$ has transition
probability density and the random matrix $M_1(\theta)$ is driven by $\{(X_n,Y_n), n \ge 0\}$, it 
implies that the induced transition probability
$\Ps(\cdot,\cdot)$ has a density with respect to $m  \times Q$.
Denote the density as $p$ for simplicity. 
According to Theorem 1(iii) in Fuh \cite{FuhAS03}, under conditions {\bf C1} and {\bf C2} 
given below, the stationary distribution of $\{W_n, n\ge 0\}$ 
exists. Denote it by $\Pi$.

The crucial observation is that the log-likelihood ratio can now be written as an additive
functional of the Markov chain $\{W_n, n \ge 0\}$. That is,
\begin{eqnarray}\label{lrn}
S_n=\log LR_n =\sum_{k=1}^{n}g(W_{k-1},W_k),
\end{eqnarray}
where
\begin{eqnarray}\label{gk}
~~~ g(W_{k-1},W_k)
:= \log \frac{||M^0_k \circ \cdots \circ M^0_1  M^0_0 \pi_0||/
||M^0_{k-1} \circ \cdots \circ M^0_1 M^0_0 \pi_0||}
{||M^\infty_k \circ \cdots \circ M^\infty_1 M^\infty_0 \pi_\infty||/
||M^\infty_{k-1} \circ \cdots \circ M^\infty_1 M^\infty_0 \pi_\infty||}.
\end{eqnarray}

In the following sections, we show that the Shiryaev procedure with a certain
threshold $A=A_\alpha$ is asymptotically first-order optimal as
$\alpha \to 0$ for a large class of prior distributions and provide
a higher order approximation to the average detection delay 
for the geometric prior.

Regarding prior distributions $\omega_k=\Ps(\nu=k)$, we will assume throughout that condition 
\eqref{expon} holds for some $c \ge 0$.
A case where a fixed positive $c$ is replaced with the value of $c_\alpha$ that depends on $\alpha$ and vanishes when $\alpha \to 0$ with a certain appropriate rate will also be handled.

\section{First Order Asymptotic Optimality}\label{s:FOAO}

For ease of notation, let ${\cal X} := \bar{D} \times P({\bf R}^d)\times P({\bf R}^d)$ be the 
state space of the Markov chain $\{W_n, n \ge 0\}$. Denote $w:=(z, \bar{u}, \bar{u})$ 
and $\bar{w} := (z_0, \pi_\infty, \pi_0)$, where $z_0$ is the initial state of 
$Z_0$ taken from $\pi_\infty f_\infty(\cdot|x)$. To prove first-order asymptotic 
optimality of the Shiryaev rule and to derive a high-order asymptotic approximation to the average 
detection delay for HMMs, the conditions C1--C2 set below are assumed
throughout this paper. Before that we need the following definitions and notations.

Abusing the notation a little bit, a Markov chain $\{X_n,n \ge 0\}$ on a general state space 
${\cal X}$  is called $V$-uniformly ergodic if there exists a measurable function
$V: {\cal X} \rightarrow [1,\infty)$, with $\int V(x) m(dx) < \infty$, such that 
\begin{eqnarray}\label{3.11}
~~~ \lim_{n \rightarrow \infty} \sup_{x \in {\cal X}} \bigg\{\frac{\big|\E[h(X_n)|X_0=x]
 - \int h(y)m(dy)\big|}{V(x)}: |h| \le V \bigg\} =0.
\end{eqnarray}
Under irreducibility and aperiodicity assumption, $V$-uniform   
ergodicity implies that $\{X_n,n \ge 0\}$ is Harris recurrent in
the sense that there exist a recurrent set ${\cal A} \in \Bmc({\cal X})$, a probability measure $\varphi$ on ${\cal A}$ and an
integer $n_0$ such that $\Ps\{ X_n \in {\cal A}~{\rm for~some}~n \ge
n_0 |X_0=x\} =1$ for all $x \in {\cal X}$, and there exists
$\lambda > 0$ such that
\begin{eqnarray}\label{min}
\Ps\{ X_n \in A |X_0=x\} \ge \lambda \varphi(A)
\end{eqnarray}
for all $x \in {\cal A}$ and $A \subset {\cal A}$. Under (\ref{min}), Athreya and Ney 
\cite{AthreyaNeyTrans78} proved that $X_n$ admits a regenerative scheme with i.i.d. 
inter-regeneration times for an augmented Markov chain, which is called the ``split
chain''. Recall that $S_n= \log LR_n$ is defined in (\ref{lrn}). 
Let $\varrho$ be the first time $(>0)$ to reach the
atom of the split chain, and define $u(\alpha,\zeta)= \E^{\mu}e^{\alpha S_\varrho-\zeta \varrho}$ 
for $\zeta \in {\bf R}$, where $\mu$ is an initial distribution on ${\cal X}$.
Assume that
\begin{eqnarray}\label{mom}
 \{(\alpha,\zeta):u(\alpha,\zeta) < \infty\}~~~{\rm is~an~open~subset~on}~{\bf R}^2.
\end{eqnarray}
Ney and Nummelin \cite{NeyNummelinAP87} showed that
${\cal D}=\{\alpha:u(\alpha,\zeta) < \infty~{\rm for~some}~\zeta\}$ is an
open set and that for $\alpha \in {\cal D}$ the transition kernel
$\hat{\Ps}_{\alpha}(x,A ) = \E^{x}\{ e^{\alpha S_1} I_{\{X_1 \in A
\}}\}$ has a maximal simple real eigenvalue $e^{\Psi(\alpha)}$,
where $\Psi(\alpha)$ is the unique solution of the equation
$u(\alpha,\Psi(\alpha))=1$ with the corresponding eigenfunction
$r^*(x;\alpha):= \E^{x} \exp\{\alpha S_\varrho - \Psi(\alpha)
\varrho\}$. Here $\E^x(\cdot)=\E(\cdot |X_0=x)$. For a measurable subset $A \in \Bmc({\cal X})$ and
$x \in {\cal X}$, define
\begin{eqnarray}\label{eignmea}
~~~~~~~{\cal L}(A ;\alpha) =  \E^{\mu}\bigg[\sum_{n=0}^{\varrho -1} e^{\alpha S_n- n \Psi(\alpha)}
I_{\{X_n \in A \}}\bigg],~~~
{\cal L}_{x}(A;\alpha) =  \E^{x}\bigg[\sum_{n=0}^{\varrho -1} e^{\alpha S_n- n \Psi(\alpha)}
I_{\{X_n \in A \}}\bigg].
\end{eqnarray}
Note that the finiteness of the state space ensures the finiteness of the eigenfunction $r(x;\alpha)$ and the eigenmeasure ${\cal L}(A ;\alpha)$.

For given ${\Ps}_{\infty}$ and ${\Ps}_{0}$, define the Kullback--Leibler information number as
\begin{eqnarray}\label{kl}
 \Kc:=K({\Ps}_{0}, {\Ps}_{\infty}) = {\E}_{{\Ps}_{0}}^{\Pi} \bigg( \log \frac{\|M_1^0  M_0^0 \pi_{0} \| }
 {\|M_1^\infty M_0^\infty \pi_{\infty} \|}  \bigg), 
\end{eqnarray}
where ${\Ps}_{\infty}$ (${\Ps}_{0}$) denotes the probability
of the Markov chain $\{W_n^{\infty}, n \ge 0\}$ ($\{W_n^{0}, n \ge
0\}$), and ${\E}_{{\Ps}_{\infty}}^{\Pi}$ (${\E}_{{\Ps}_{0}}^{\Pi}$) refers to the expectation 
for ${\Ps}_{\infty}$ (${\Ps}_{0}$) under the invariant probability $\Pi$.

The following conditions {\bf C} are assumed throughout this paper.

{\bf C1.} For each $j=\infty,0$, the Markov chain $\{X_n, n \ge 0 \}$ 
defined in (\ref{ssm}) and (\ref{ssmden}) is ergodic (positive recurrent, irreducible and aperiodic) 
on a finite state space $D=\{1,\cdots,d\}$. Moreover,
the Markov chain $\{(X_n,Y_n), n \ge 0\}$ is irreducible, aperiodic and
$V$-uniformly ergodic for some $V$ on $\bar{D}$ with 
$$
 \sup_{(x,y)\in D \times {\bf R}^m} \set{\frac{\E^{(x,y)}[V((X_1,Y_1))]}{V((x,y))}} < \infty.
 $$
We also assume that the Markov chain
$\{(X_n,Y_n),n \ge 0\}$ has stationary probability $\Gamma$ with probability density 
$\pi_xf(\cdot|x)$ with respect to a $\sigma$-finite measure.

{\bf C2.} Assume that $0< \Kc < \infty$. For each $j=\infty,0$, assume that the random matrices 
$M_0^j$ and $M_1^j$, defined in (\ref{rm1}) and (\ref{rm2}), are invertible 
$\Ps_j$ almost surely and for some $r \ge 1$
\begin{eqnarray} \label{C22}
 \sup_{(x, y) \in D \times {\bf R}^m} \E^{(x,y)} \bigg| \frac{|Y_1|^r V((X_1,Y_1))}{V((x,y))}
 \bigg|  < \infty.
\end{eqnarray}

\begin{remark} \label{Rem1}
The ergodicity condition C1 for Markov chains is quite general and covers several interesting examples of HMMs in Section~\ref{s:Examples}. 
Condition C2 is a standard constraint imposed on the Kullback--Leibler information number and a moment condition under the variation norm. 
Note that positiveness of the Kullback--Leibler information number is not at all restrictive since it holds whenever the probability density functions of
${\Ps}_{0}$ and ${\Ps}_{\infty}$ do not coincide almost everywhere. The finiteness condition is quite natural and holds in most cases. 
Moreover, the cases where it is infinite are easy to handle and can be viewed as degenerate from the asymptotic theory standpoint. 
\end{remark}

Recall that the Markov chain $\{W_n, n \ge 0\}$ on ${\cal X}:= \bar{D} \times P({\bf R}^d)\times P({\bf R}^d)$ is induced by the products of random matrices.
A positivity hypothesis of the elements of $M_k^j$, which implies the positivity on the functions in the support of the Markov chain, leads to contraction properties which are the basis of the 
spectral theory developed by Fuh \cite{FuhAS03}. Another natural assumption is that the transition
probability possesses a density. This leads to a classical situation in the context of the 
so-called ``Doeblin condition'' for Markov chains. It also leads to precise results of 
the limiting theory and has been used to develop a nonlinear renewal theory in Section 3 of Fuh 
\cite{FuhAS04}. We summarize the properties of $\{W_n,n \ge 0\}$ in the following proposition. 
Since the proof is the same as Proposition~2 of Fuh \cite{FuhAS04}, it is omitted.
Denote $ \chi(M) = \sup ( \log\|M\|, \log\|M^{-1}\|)$.

\begin{proposition}\label{prop1}
Consider a given HMM as in \eqref{ssm} and \eqref{ssmden} satisfying {\rm C1--C2}. Then the induced Markov chain $\{W_n,n \ge 0\}$, 
defined in \eqref{tk} and \eqref{tk1}, is an aperiodic, irreducible and 
Harris recurrent Markov chain under $\Ps_\infty$. Moreover, it is also a
$V$-uniformly ergodic Markov chain for some $V$ on ${\cal X}$. We have
$\sup_{w} \{\E_\infty [V(W_1)|W_0=w]/V(w)\} < \infty,$ and there exist $a,C > 0$ such that 
${\E}_{\infty}(\exp\{a \chi(M_1)\}|W_0=w) \le C$ for all $w=(x_0,\pi,\pi) \in {\cal X}.$
\end{proposition}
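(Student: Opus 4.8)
The plan is to verify the four assertions by placing the induced chain $\{W_n\}$ in the framework for products of Markov random matrices of Fuh~\cite{FuhAS03,FuhAS04}, in which the positivity of the entries of the matrices $M_k^j$ supplies a strict contraction in the projective coordinates and the density of the induced kernel supplies a Doeblin-type minorization. Throughout I would exploit three structural facts: the $Z$-marginal $\{Z_n\}=\{(X_n,Y_n)\}$ is an autonomous Markov chain by \eqref{ssm}--\eqref{ssmden}; $\{W_n\}$ is a skew product over $\{Z_n\}$ whose fibre coordinate is $\overline{{\bf M}_n u}$; and that fibre coordinate lives on the \emph{compact} space $P({\bf R}^d)\times P({\bf R}^d)$.

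For irreducibility and aperiodicity I would argue that, since the entries of $M_k^j$ are strictly positive, Birkhoff's contraction theorem makes the projective action a strict contraction in the Hilbert metric, so $\overline{{\bf M}_n u}$ forgets its starting value $\overline u$ geometrically fast and uniformly in $\overline u$; combined with the irreducibility and aperiodicity of $\{(X_n,Y_n)\}$ assumed in C1 and the fact (noted after \eqref{tk1}) that the kernel $\Ps(\cdot,\cdot)$ possesses a density $p$ with respect to $m\times Q$, this gives irreducibility and aperiodicity of $\{W_n\}$ on ${\cal X}$. For Harris recurrence and $V$-uniform ergodicity I would run a Foster--Lyapunov drift-and-minorization argument: because the projective factors are compact, I would lift the drift function from $\bar D$ by setting $V(w):=V((x,y))$, so that the geometric drift inequality on a small set reduces to the multiplicative bound $\sup_{(x,y)}\E^{(x,y)}[V((X_1,Y_1))]/V((x,y))<\infty$ provided by C1, while the minorization on that set follows from the density of $p$ together with the projective contraction. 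Invoking the regeneration and spectral machinery already cited through \eqref{min} and in \cite{AthreyaNeyTrans78,NeyNummelinAP87} then delivers $V$-uniform ergodicity in the sense of \eqref{3.11}, and the asserted bound $\sup_w\E_\infty[V(W_1)\mid W_0=w]/V(w)<\infty$ is exactly C1 read on ${\cal X}$.

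For the exponential moment bound I would estimate $\chi(M_1)=\sup(\log\|M_1\|,\log\|M_1^{-1}\|)$ through the factorization $M_1=D\,P^t$, where $D=\mathrm{diag}(f(Y_1\mid x',Y_0))_{x'}$ and $P=[p(x,x')]$. Since the $p(x,x')$ are bounded above and $P$ is invertible (forced by the invertibility of $M_1$ assumed in C2 together with the almost-sure invertibility of $D$), one obtains $\|M_1\|\le C_1\max_{x'}f(Y_1\mid x',Y_0)$ and $\|M_1^{-1}\|\le C_2/\min_{x'}f(Y_1\mid x',Y_0)$ with $C_1,C_2$ depending only on $P$. Hence $\E_\infty\exp\{a\chi(M_1)\}\le C$ reduces, by finiteness of $D$, to the two $\Ps_\infty$-integrability estimates $\E_\infty[\max_{x'}f(Y_1\mid x',Y_0)]^{a}<\infty$ and $\E_\infty[\min_{x'}f(Y_1\mid x',Y_0)]^{-a}<\infty$, which I would extract from the moment hypothesis \eqref{C22} and the calibration of the drift function $V$ in C1.

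The main obstacle is the second of these two estimates, i.e.\ the exponential control of $\|M_1^{-1}\|$. Whereas $\|M_1\|$ is governed by the largest emission density and is therefore tame, $\|M_1^{-1}\|$ is governed by the \emph{smallest} density $\min_{x'}f(Y_1\mid x',Y_0)$, so $\E_\infty\exp\{a\chi(M_1)\}<\infty$ demands that, under $\Ps_\infty$, the law of $Y_1$ place little mass where some $f(\cdot\mid x',Y_0)$ is tiny. This is precisely the integrability requirement that dictates the choice of $V$ and the exponent $r$ in \eqref{C22}, and it is the reason the proof can be inherited verbatim from Proposition~2 of Fuh~\cite{FuhAS04} rather than being merely routine.
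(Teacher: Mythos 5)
First, note that the paper does not actually prove this proposition: it states that the proof is identical to Proposition~2 of Fuh \cite{FuhAS04} and omits it. Your architecture --- a skew product over the autonomous chain $\{(X_n,Y_n)\}$, Birkhoff/Hilbert-metric contraction of the projective fibre, a density-based minorization, and a drift function lifted from $\bar D$ because the fibre $P({\bf R}^d)\times P({\bf R}^d)$ is compact --- is exactly the machinery behind that cited result, so on the structural assertions (irreducibility, aperiodicity, Harris recurrence, $V$-uniform ergodicity, and $\sup_w \E_\infty[V(W_1)\mid W_0=w]/V(w)<\infty$) you are reconstructing the intended argument. Two small imprecisions there: C1 does not assert that every entry $p(x,x')f(Y_k\mid x',Y_{k-1})$ of $M_k^j$ is strictly positive, only that the finite chain is irreducible and aperiodic, so the transition matrix is merely primitive and the Birkhoff contraction applies to blocks $M_{k+n_0}\circ\cdots\circ M_{k+1}$ of some fixed length rather than to a single matrix; and the multiplicative bound $\sup_w\E[V(W_1)]/V(w)<\infty$ is not by itself a geometric drift inequality --- what actually transfers to the lifted chain is the $V$-uniform ergodicity of the base chain assumed in C1, combined with the uniform contraction of the fibre coordinate.

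The genuine gap is the final assertion, $\E_{\infty}(\exp\{a \chi(M_1)\}\mid W_0=w) \le C$. Your reduction to the two estimates $\E_\infty[\max_{x'}f(Y_1\mid x',Y_0)]^{a}<\infty$ and $\E_\infty[\min_{x'}f(Y_1\mid x',Y_0)]^{-a}<\infty$ is correct, and you rightly flag the second as the crux; but the claim that it can be ``extracted from \eqref{C22}'' does not hold. Condition \eqref{C22} controls the positive polynomial moment $|Y_1|^r$ weighted by the drift function; it carries no information about how much $\Ps_\infty$-mass $Y_1$ places in regions where some emission density $f(\cdot\mid x',Y_0)$ is small, which is what a negative moment of $\min_{x'}f$ requires. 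The two are logically independent: one can have all polynomial moments of $|Y_1|$ finite while $\int[\min_{x'}f(y\mid x',y_0)]^{-a}f(y\mid x'',y_0)\,Q(\d y)=\infty$ for every $a>0$ (take emission densities with sufficiently different decay rates). The bound does hold in the paper's examples (e.g.\ Gaussian emissions with distinct means, for $a<1$ small enough), but in general it is an additional integrability hypothesis on the emission densities --- in Fuh's framework it sits, in effect, among the standing assumptions rather than following from them --- so your proof cannot close this step from C1--C2 as stated. A self-contained argument would have to either impose $\E_\infty\exp\{a\chi(M_1)\}<\infty$ explicitly or verify it model by model.
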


Recall (see \eqref{LLRgen}) that by $S_n^k = \log  LR_n^k$, $k \le n$ ($S_0^k=0$, $S_n^{n+j} =0$),
we denote the log-likelihood ratio (LLR) of the hypotheses that the change 
takes place at $\nu=k $ and there is never a change ($\nu=\infty$).
By Theorem~\ref{Th:FOAOgen}(ii), the $r-$quick convergence of the normalized LLR processes $n^{-1}S_{k+n-1}^k$, $k=1,2,\dots,$ to the Kullback--Leibler information number $\Kc$ 
(see \eqref{rquickgen}) is sufficient for asymptotic optimality of the Shiryaev procedure. Thus, to establish asymptotic optimality for HMMs, it suffices to show that
condition \eqref{rquickgen} is satisfied under conditions C1--C2. This is the subject of the next lemma, which is then used for proving asymptotic optimality of Shiryaev's procedure for HMMs.

Let $\tau_{\varepsilon, k}$ be the last entree time defined in \eqref{LETLLRgen}.
Denote $\Ps^{(\pi,f)}_k$ as the probability of the Markov chain $\{(X_n,Y_n), n \ge 0\}$ 
starting with initial distribution $(\pi,f)$, the stationary distribution, and conditioned on the change point $\nu=k$. Let $\E^{(\pi,f)}_k$ be the corresponding expectation. For notational simplicity, we omit $(\pi,f)$ and simply denote $\Ps_{k}$ and $\E_{k}$ from now on. 

\begin{lemma} \label{lem1} 
Assume that conditions {\rm C1--C2} hold.  

{\rm \bf (i)} As $n\to\infty$,
\begin{equation}\label{as}
\frac{1}{n}S_{k+n-1}^{k} \longrightarrow  \Kc \quad  \Ps_{k}-\text{a.s. for every} ~ k<\infty.
\end{equation}

{\rm \bf (ii)} Let $\E_{1}|S_1^1|^{r+1} < \infty$ for some $r\ge 1$. Then 
$\E_{k} (\tau_{\varepsilon, k})^r < \infty$ 
for all $\varepsilon > 0$ and $k \ge 1$, i.e., as $n\to\infty$,
\begin{equation}\label{cas}
\frac{1}{n}S_{k+n-1}^{k} \longrightarrow  \Kc \quad \Ps_{k}-r-\text{quickly for every}~k<\infty.
\end{equation}
\end{lemma}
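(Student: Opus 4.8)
The plan is to read off both assertions from the additive-functional representation \eqref{lrn}--\eqref{gk}. Under $\Ps_k$ the $i$-th increment of the log-likelihood ratio is $\log\Lambda_i=g(W_{i-1},W_i)$, so that $S_{k+n-1}^{k}=\sum_{i=k}^{k+n-1}g(W_{i-1},W_i)$ is a Markov random walk driven, for $i\ge k$, by the post-change transition law of $\{W_n,n\ge 0\}$. Applying the argument behind Proposition~\ref{prop1} to the post-change measure $\Ps_0$ shows that this chain is positive Harris recurrent and $V$-uniformly ergodic with a stationary law $\Pi$ and with the exponential control $\E_0\bl(\exp\{a\chi(M_1)\}\mid W_0=w\br)\le C$. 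Both parts of the lemma thereby become limit theorems for an additive functional of such a chain, started from the (arbitrary but moment-controlled) law of $W_{k-1}$.

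For part (i), definition \eqref{kl} identifies the stationary mean of the increment as $\E_{\Ps_0}^{\Pi}g(W_0,W_1)=\Kc$, and the assumption $0<\Kc<\infty$ in C2 forces $\E^{\Pi}\abs{g(W_0,W_1)}<\infty$. The strong law of large numbers for additive functionals of a positive Harris recurrent chain then gives $n^{-1}S_{k+n-1}^{k}\to\Kc$ $\Ps_k$-a.s. Because the post-change chain is $V$-uniformly ergodic it forgets the initial state $W_{k-1}$ geometrically fast, so the limit does not depend on $k$ and \eqref{as} holds for every $k<\infty$.

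For part (ii), I would invoke the regeneration (split-chain) scheme recalled before Proposition~\ref{prop1} (Athreya and Ney~\cite{AthreyaNeyTrans78}, Ney and Nummelin~\cite{NeyNummelinAP87}), applied to the post-change chain. This breaks the centered walk $S_{k+n-1}^{k}-n\Kc$ into i.i.d.\ blocks: the inter-regeneration times $\varrho_1,\varrho_2,\dots$ are i.i.d., as are the block sums $\zeta_j=\sum_{i\in\text{block }j}\bl(g(W_{i-1},W_i)-\Kc\br)$. The $V$-uniform ergodicity together with the exponential bound on $\chi(M_1)$ in Proposition~\ref{prop1} gives geometric tails for the $\varrho_j$, hence finite moments of all orders. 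The hypothesis $\E_1\abs{S_1^1}^{r+1}<\infty$ is a one-step moment bound on $g(W_0,W_1)$; combined with the domination by $V$ in C2 it propagates to $\E^{\Pi}\abs{g(W_0,W_1)}^{r+1}<\infty$, and then, by controlling the random block length through H\"older's inequality against the geometric tails of $\varrho_j$, to $\E\abs{\zeta_1}^{r+1}<\infty$. For an i.i.d.\ centered sequence the finiteness of the $(r+1)$-th moment is equivalent to $r$-quick convergence of the normalized partial sums, the classical criterion recalled in Section~\ref{s:Overview}; hence the last entry time of the embedded random walk on the $\zeta_j$ has finite $r$-th moment. Dominating $\tau_{\varepsilon,k}$ of \eqref{LETLLRgen} by the last entry time of the embedded walk plus the maximal block overshoot, and absorbing the a.s.\ finite contribution of the initial segment before the first regeneration, yields $\E_k(\tau_{\varepsilon,k})^{r}<\infty$ for every $\varepsilon>0$ and $k\ge1$. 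This is exactly the $r$-quick convergence theorem for Markov random walks of \citet{FuhZhang00}, whose hypotheses will have been checked under C1--C2.

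The step I expect to be the main obstacle is the passage from the one-step moment $\E_1\abs{S_1^1}^{r+1}<\infty$ to the $(r+1)$-th moment of the random-length block sums $\zeta_j$: since the blocks have random length, $\E\abs{\zeta_1}^{r+1}$ must be controlled by combining the increment moment with the exponential tail of $\varrho_j$, and this estimate is where the uniform bounds of Proposition~\ref{prop1} are essential. A secondary point is that the initial condition $W_{k-1}$ under $\Ps_k$ is non-stationary; its relevant $V$-moments are, however, uniformly bounded in $k$ (the pre-change segment is run from the pre-change stationary start), so the geometric ergodicity lets the transient be absorbed without affecting finiteness, and in fact renders the bound on $\E_k(\tau_{\varepsilon,k})^{r}$ uniform in $k$, which is what later makes \eqref{rquickgen} available once the prior-tail condition \eqref{expon} is imposed.
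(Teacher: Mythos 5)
Your part (i) is essentially the paper's proof: both identify the stationary mean of the additive functional $g(W_{i-1},W_i)$ with $\Kc$ via \eqref{kl} and invoke the SLLN for Markov random walks under the ergodicity supplied by Proposition~\ref{prop1}. For part (ii) you arrive at the same destination but by a genuinely different route. The paper does not open up the regeneration structure at all: it reduces $\E_k(\tau_{\varepsilon,k})^r<\infty$ to the tail-sum condition \eqref{casra} and then cites Theorem~6 of Fuh and Zhang \cite{FuhZhang00}, whose hypotheses are only the \emph{one-step} moment bounds $\E_1|S_1^1|^2<\infty$ and $\E_1[(S_1^1-\Kc)^+]^{r+1}<\infty$ (immediate from your assumption) plus solvability of the Poisson equation \eqref{6.6a}, which the paper verifies via Proposition~\ref{prop1} and Theorem~17.4.2 of Meyn and Tweedie \cite{MeynTweedie09}. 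You instead sketch the split-chain proof that underlies such a theorem: i.i.d.\ blocks, geometric tails of the regeneration times, the i.i.d.\ criterion equating $r$-quick convergence with finiteness of the $(r+1)$-th moment, and domination of $\tau_{\varepsilon,k}$ by the embedded last entry time. What the paper's route buys is precisely the avoidance of the step you yourself flag as the main obstacle: propagating the one-step $(r+1)$-th moment to the $(r+1)$-th moment of the random-length block sums. Your proposed H\"older estimate against the geometric tails of $\varrho_j$ does not close as written with only an $(r+1)$-th moment of the increments under the initial law --- one needs either a uniform conditional bound $\sup_w\E[|g(W_0,W_1)|^{r+1}\,|\,W_0=w]<\infty$ or the martingale/Poisson-equation machinery of \cite{FuhZhang00}; this is exactly the difficulty the Poisson-equation hypothesis is designed to absorb, and it is the one hypothesis of the cited theorem your write-up never addresses. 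Since you do ultimately anchor the argument in the Fuh--Zhang theorem, the proposal is sound, but a complete version should either verify the Poisson equation (as the paper does) or supply the missing uniform moment control in the block estimate.
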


\begin{remark}
Since the Markov chain $\{W_n, n \ge 0\}$ is stationary if the initial distribution is $(\pi, f)$,
Lemma \ref{lem1}(ii) implies that a stronger uniform version holds, i.e., 
$\sup_{k\ge 1}  \E_{k} (\tau_{\varepsilon, k})^r < \infty$ for all $\varepsilon > 0$. 
This implies that 
\begin{equation}\label{cas1}
\sum_{k=0}^\infty \omega_k \E_{k} (\tau_{\varepsilon, k})^r<\infty \quad \text{for all $\vae>0$}.
\end{equation}
\end{remark}

\proof

(i) By Proposition \ref{prop1}, Proposition 1 in Fuh \cite{FuhAS03}, and the ergodic theorem for Markov chains, it is easy to see that  the upper Lyapunov exponent for the Markov chain $\{W_n, n\ge 0\}$ 
under the probability $\Ps_0$ is nothing but the relative entropy defined as 
\[
\begin{aligned}
H(\Ps_0,\Ps_j) &= {\E}_{\Ps_0}\brcs{\log p_j(Y_1|Y_0, Y_{-1},\cdots)} \\
  &= {\E}_{\Ps_0}\brcs{\log \sum_{x=1}^d \sum_{x'=1}^d f_j(Y_1|Y_0;X_1=x')   p_j(x,x') \Ps_j (X_0=x|Y_0, Y_{-1},\cdots)}\\
&= {\E}_{\Ps_0}\brcs{\log \|M_1^j \Ps_j (X_0=x|Y_0, Y_{-1},\cdots)\|}. 
\end{aligned}
\]
Therefore, the Kullback--Leibler information number can be defined as that in \eqref{kl}. Hence (i) is proved via the standard law of large numbers 
argument for Markov random walks.

(ii) Note that
\[
\Ps_{k} (\tau_{\vae,k} \ge n) \le  \Ps_{k}\left \{\sup_{j \ge n} \left | \frac{S_{k+j-1}^k}{j} - \Kc  \right |>\vae \right\},
\]
so that
\[
\E_{k}(\tau_{\vae,k})^r \le r \int_0^\infty t^{r-1} \Ps_{k}\left\{\sup_{j \ge t} \left| \frac{S_{k+j}^k}{j} - \Kc \right|>\vae
\right\} \, \d t,
\]
and, therefore, in order to prove (ii) it suffices to prove that
\begin{equation}\label{casra}
\sum_{k=1}^\infty \omega_k \sum_{n=1}^\infty n^{r-1} \Ps_k\set{\sup_{j \ge n} \Bigl| \frac{S_{k+j-1}^k}{j} - \Kc \Bigr|>\vae}<\infty \quad \text{for all $\vae>0$}.
\end{equation}

By Proposition~\ref{prop1} above, we can apply Theorem 6 of
Fuh and Zhang \cite{FuhZhang00} to obtain that \eqref{casra} holds whenever 
$\E_{1} | S_{1}^1|^2 < \infty$ and $\E_{1} [(S_{1}^1 - \Kc)^+]^{r+1}<\infty$ for some $ r \ge 1$, 
and the following Poisson equation has solution $\Delta : {\cal X} \rightarrow {\bf R}$
\begin{eqnarray}\label{6.6a}
\E^{w} \Delta(W_{1}) - \Delta(w) = \E^{w} S_{1}^1 - \E^\Pi S_1^1
\end{eqnarray}
for almost every $w \in {\cal X}$ with $\E^\Pi \Delta (W_{1}) = 0$, where $\E^w(\cdot)=
\E_1(\cdot|W_0=w)$ is the conditional expectation when the change occurs at $\nu=1$ 
conditioned on $W_0=w$, i.e., when the Markov chain $\{W_n\}_{n\ge 0}$ is initialized from the point $w$, and $\E^{\Pi}(\cdot) = \int \E^w(\cdot) \, \d \Pi $.

To check the validity of (\ref{6.6a}), we first note that using Proposition \ref{prop1} and Theorem 17.4.2 of Meyn and Tweedie \cite{MeynTweedie09}, we have the existence of a solution for (\ref{6.6a}). 
Moreover, $\sup_{j\ge 0} \E^{w} |\Delta(W_{j})|^r < \infty$ for some $r \ge 1$ follows from the boundedness property in Theorem 17.4.2 of Meyn and Tweedie \cite{MeynTweedie09} 
under conditions C1--C2.
Next, by C2 and the moment assumption $\E_{1}|S_1^1|^{r+1} < \infty$ in (ii), 
the moment conditions $\E_{1} | S_{1}^1|^2 < \infty$ and $\E_{1} [(S_{1}^1 - \Kc)^+]^{r+1}<\infty$ hold for some $ r \ge 1$. This completes the proof.
\endproof

\begin{remark}
The assertions of Lemma~\ref{lem1} hold true even if the Markov chain $W_n$ is initialized from any deterministic or random point with ``nice'' distribution. 
However, the proof in this case becomes more complicated.
\end{remark}

Now everything is prepared to prove the first-order optimality property of the Shiryaev changepoint detection procedure.

\begin{theorem} \label{Th:th1} 
Let conditions  {\rm C1--C2} hold. Furthermore, let $r\ge 1$ and assume $\E_1|S_1^1|^{r+1} < \infty.$   
Let the prior distribution of the change point satisfy condition \eqref{expon} and, in the case of $c=0$, let in addition 
$\sum_{k=0}^\infty |\log \omega_k|^{r} \omega_k<\infty$. 

{\rm \bf (i)}  Then for all $m \le r,$ 
\begin{equation}\label{Momentsgen}
\E^\omega[(T_A-\nu)^m | T_A \ge \nu] \sim  \brc{\frac{\log A}{\Kc + c}}^m \quad \text{as} ~ A \to \infty.
\end{equation}

{\rm \bf (ii)} Let $A=A_\alpha=(1-\alpha)/\alpha$.
Then $\pfa(T_{A_\alpha} )\le \alpha$, i.e., the Shiryaev detection
procedure $T_{A_\alpha}$ belongs to  class $\classA$, and
\begin{equation}\label{MomentsAOgen}
\inf_{T \in \classA} \E^\omega[(T - \nu)^m | T \ge \nu]  \sim \brc{\frac{|\log \alpha|}{\Kc + c}}^m \sim
\E^\omega[(T_{A_\alpha}-\nu)^m | T_{A_\alpha} \ge \nu]   \quad \text{as}~ \alpha\to 0.
\end{equation}
This assertion also holds if threshold $A_\alpha$ is selected so that $\PFA(T_{A_\alpha})\le \alpha$ and $-\log\PFA(T_{A_\alpha})\sim \log A_\alpha$ as $\alpha\to0$.
\end{theorem}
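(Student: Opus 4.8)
The plan is to deduce this theorem directly from the general asymptotic theory in Theorem~\ref{Th:FOAOgen} by verifying its hypotheses for the HMM setting, so that essentially all the heavy lifting is delegated to results already established. The key observation is that the log-likelihood ratio $S_{k+n-1}^k$ for the HMM admits the additive functional representation \eqref{lrn}--\eqref{gk} over the induced Markov chain $\{W_n, n\ge 0\}$, and that Proposition~\ref{prop1} guarantees this chain is $V$-uniformly ergodic and Harris recurrent under conditions C1--C2. The crucial regularity condition \eqref{rquickgen} required by Theorem~\ref{Th:FOAOgen}, namely $\sum_{k=0}^\infty \omega_k\,\E_k(\tau_{\varepsilon,k})^r < \infty$ for all $\varepsilon>0$, has already been secured: Lemma~\ref{lem1}(ii) establishes the $r$-quick convergence of $n^{-1}S_{k+n-1}^k$ to $\Kc$ for each $k$, and the remark following Lemma~\ref{lem1} upgrades this to the uniform bound $\sup_{k\ge 1}\E_k(\tau_{\varepsilon,k})^r<\infty$, whence \eqref{cas1} gives precisely \eqref{rquickgen}.

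First I would fix $r\ge 1$ and the moment assumption $\E_1|S_1^1|^{r+1}<\infty$, and verify that all hypotheses of Theorem~\ref{Th:FOAOgen} are met. The prior-distribution conditions (existence of the exponential rate $c$ via \eqref{expon}, plus $\sum_k |\log\omega_k|^r\omega_k<\infty$ when $c=0$) are assumed verbatim in the statement, so they transfer immediately. The constant $\Kc$ is positive and finite by condition C2, and Lemma~\ref{lem1}(i) confirms the a.s.\ convergence $n^{-1}S_{k+n-1}^k\to\Kc$. The $r$-quick convergence $\E_k(\tau_{\varepsilon,k})^r<\infty$ for every $k$ and the summability \eqref{rquickgen} follow from Lemma~\ref{lem1}(ii) and its remark as noted above. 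With every hypothesis in place, part~(i) of Theorem~\ref{Th:FOAOgen} yields \eqref{Momentsgen1}, which upon taking $m$-th powers and rewriting is exactly the claimed asymptotic \eqref{Momentsgen} for all $m\le r$ as $A\to\infty$.

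For part~(ii), I would invoke the universal false-alarm bound $\PFA(T_A)\le 1/(1+A)$ recorded in the excerpt just before Theorem~\ref{Th:FOAOgen}; choosing $A=A_\alpha=(1-\alpha)/\alpha$ gives $\PFA(T_{A_\alpha})\le\alpha$, so $T_{A_\alpha}\in\classA$. Since $\log A_\alpha = \log\frac{1-\alpha}{\alpha}\sim|\log\alpha|$ as $\alpha\to 0$, Theorem~\ref{Th:FOAOgen}(ii) applies and delivers both the asymptotic equivalence of $\E^\omega[(T_{A_\alpha}-\nu)^m\mid T_{A_\alpha}\ge\nu]$ to $(|\log\alpha|/(\Kc+c))^m$ and its matching of the infimum over $\classA$, which is \eqref{MomentsAOgen}. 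The final sentence, allowing any threshold with $\PFA(T_{A_\alpha})\le\alpha$ and $-\log\PFA(T_{A_\alpha})\sim\log A_\alpha$, follows by the same appeal to Theorem~\ref{Th:FOAOgen}(ii), whose hypothesis is phrased precisely in terms of $\log A_\alpha\sim|\log\alpha|$; one only needs to observe that $-\log\PFA(T_{A_\alpha})\sim\log A_\alpha$ together with $\PFA\le\alpha$ forces $\log A_\alpha\sim|\log\alpha|$.

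The one genuine obstacle is not in this theorem itself but in its prerequisite, Lemma~\ref{lem1}(ii): the $r$-quick convergence rests on solving the Poisson equation \eqref{6.6a} and controlling the moments of its solution $\Delta$ uniformly, which in turn invokes the $V$-uniform ergodicity from Proposition~\ref{prop1} and the Markov-random-walk machinery of Fuh and Zhang~\cite{FuhZhang00}. Since that lemma and proposition are already established in the excerpt, the present theorem reduces to checking that the abstract hypotheses of Theorem~\ref{Th:FOAOgen} coincide with the conditions we have verified for the HMM, a matter of bookkeeping rather than new analysis; I would therefore keep the proof short and largely a citation of the preceding results.
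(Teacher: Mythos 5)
Your proposal is correct and follows essentially the same route as the paper: both reduce the theorem to Theorem~\ref{Th:FOAOgen} by invoking Lemma~\ref{lem1} and the uniform bound \eqref{cas1} to verify the $r$-quick convergence condition \eqref{rquickgen}, and both use the bound $\PFA(T_A)\le 1/(1+A)$ for part (ii). If anything, your citation of Theorem~\ref{Th:FOAOgen}(i) for the first assertion is more precise than the paper's reference to part (ii) there, and your closing remark that $-\log\PFA(T_{A_\alpha})\sim\log A_\alpha$ together with $\PFA\le\alpha$ forces $\log A_\alpha\sim|\log\alpha|$ makes explicit a step the paper leaves implicit.
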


\proof
Part (i) follows from Theorem~\ref{Th:FOAOgen}(ii) and (\ref{cas})--\eqref{cas1}. 

(ii) It is straightforward to show that  $\pfa(T_{A_\alpha}) \le \alpha$ when threshold $A_\alpha=(1-\alpha)/\alpha$
(see, e.g., Tartakovsky and Veeravalli \cite{TartakovskyVeerTVP05}). 
Asymptotic formulas \eqref{MomentsAOgen}
follow from Theorem~\ref{Th:FOAOgen}(ii) and (\ref{cas})--\eqref{cas1}.
\endproof

Theorem~\ref{Th:th1} covers a large class of prior distributions both with exponential tails and heavy tails. However, condition \eqref{expon} does not include the case with positive exponent 
which can vanish ($c\to 0$). 
Indeed, in this case, the sum $\sum_{k=0}^\infty |\log \omega_k|^{r} \omega_k$ becomes infinitely large and the results of the theorem are applicable only under an additional restriction 
on the rate with which this sum goes to infinity. The following theorem addresses this issue when $\omega_k=\omega_k^\alpha$ depends on the $\PFA(T_{A_\alpha})$ and $c=c_\alpha\to0$ as $\alpha\to0$.  

\begin{theorem}\label{Th:th2}
Let  the prior distribution $\{\omega_k^\alpha\}_{k\ge 0}$ satisfy condition \eqref{expon} with 
$c=c_\alpha\to 0$ as $\alpha\to 0$ in such a way that 
\begin{equation}\label{Prior3hmm}
\lim_{\alpha\to 0} \frac{{\sum_{k=0}^\infty |\log \omega_k^\alpha|^r \omega_k^\alpha}}{|\log \alpha|^r} = 0.
\end{equation} 
Assume that conditions {\rm C1--C2} are satisfied and that, in addition, $\E_1|S_1^1|^{r+1} < \infty$ for some $r\ge 1$.  If $A=A_\alpha$ is so selected that $\PFA(T_{A_\alpha}) \le \alpha$ and 
$\log A_\alpha\sim |\log\alpha|$ as $\alpha\to0$, in particular $A_\alpha=(1-\alpha)/\alpha$, then the Shiryaev procedure $T_{A_\alpha}$ is asymptotically optimal as $\alpha\to0$ in class $\classA$, 
minimizing moments of the detection delay up to order $r$: for all $0<m \le r$
\begin{equation} \label{MADDAOhmm}
 \E^\omega[(T_{A_\alpha}-\nu)^m | T_{A _\alpha}\ge \nu] \sim  \brc{\frac{|\log\alpha|}{\Kc}}^m \sim \inf_{T \in \classA} \E^\omega[(T-\nu)^m | T \ge  \nu]  \quad \text{as}~ \alpha \to 0.
\end{equation}
The assertion also holds true for heavy-tailed prior distributions, i.e., when $c=0$ in \eqref{expon}, if condition \eqref{Prior3hmm} is satisfied. 
\end{theorem}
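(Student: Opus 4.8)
The plan is to reduce the assertion to the vanishing-exponent companion of Theorem~\ref{Th:FOAOgen} (the version of the general Bayesian theory that permits $c=c_\alpha\to0$ and an $\alpha$-dependent prior), whose only nontrivial hypothesis in the HMM setting is the $r$-quick convergence of the normalized log-likelihood ratio. That hypothesis is already delivered by Lemma~\ref{lem1}(ii) together with its uniform form \eqref{cas1}. The substance of the proof is therefore to verify that condition \eqref{Prior3hmm} is exactly the quantitative control that lets the $\alpha$-dependence of the prior pass harmlessly through the two one-sided bounds, and that the replacement of the denominator $\Kc+c$ by $\Kc$ is an automatic consequence of $c_\alpha\to0$.

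First I would establish the asymptotic lower bound: for every $T\in\classA$,
\[
\E^\omega[(T-\nu)^m\mid T\ge\nu]\ge(1+o(1))\brc{\frac{|\log\alpha|}{\Kc}}^m\qquad\text{as }\alpha\to0.
\]
The argument is the standard change-of-measure bound exploiting the constraint $\PFA(T)\le\alpha$: on $\{\nu=k\}$, a detection delay shorter than $(1-\delta)|\log\alpha|/\Kc$ forces the running maximum of $S_{k+n-1}^k$ to exceed a level of order $|\log\alpha|$ faster than the strong law permits, and the $\Ps_k$-probability of this upward deviation vanishes because the $r$-quick convergence of Lemma~\ref{lem1}(ii) implies the maximal-deviation control in the Remark following Theorem~\ref{Th:FOAOgen}. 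Condition \eqref{Prior3hmm} enters here to guarantee that the $\E^\omega$-averaged prior-fluctuation term $\sum_k\omega_k^\alpha|\log\omega_k^\alpha|^r$ is $o(|\log\alpha|^r)$ and thus cannot shift the leading constant, while the hypothesis $c_\alpha\to0$ is what removes the $c$ appearing in \eqref{MADDAOgen}.

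Next I would prove the matching upper bound for the Shiryaev rule. Using $\log A_\alpha\sim|\log\alpha|$ and the representation \eqref{Rnp}, on $\{\nu=k\}$ one has the lower estimate $\log R_{n,\omega}\ge\log\omega_{k,n}+S_n^k$, so that $T_{A_\alpha}-k$ is dominated by the first time the Markov random walk $S_n^k$, shifted by the slowly varying term $\log\omega_{k,n}$, crosses $\log A_\alpha$. Because $c_\alpha\to0$, the prior term $\log\omega_{k,n}$ contributes negligibly to the crossing rate, and the last-entry-time control $\E_k(\tau_{\varepsilon,k})^r<\infty$ of Lemma~\ref{lem1}(ii) upgrades the a.s.\ linear growth of $S_n^k$ at rate $\Kc$ into convergence of the $m$-th moment of the normalized delay to $\Kc^{-m}$. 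The uniform summability \eqref{cas1} then permits the averaging over the change point $k$ against the weights $\omega_k^\alpha$ without loss, yielding
\[
\E^\omega[(T_{A_\alpha}-\nu)^m\mid T_{A_\alpha}\ge\nu]\le(1+o(1))\brc{\frac{|\log\alpha|}{\Kc}}^m.
\]
Combining the two bounds gives the three-way equivalence \eqref{MADDAOhmm}, and the heavy-tailed case $c=0$ is recovered by simply taking $c_\alpha\equiv0$, for which \eqref{Prior3hmm} is the only extra requirement.

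I expect the main obstacle to be the lower bound under the $\alpha$-dependent prior: since the fixed-prior Theorem~\ref{Th:FOAOgen} cannot be invoked verbatim, one has to re-run the change-of-measure estimate while carrying the prior explicitly, and check that \eqref{Prior3hmm} furnishes exactly the room needed to absorb the $\log\omega_k^\alpha$ contributions uniformly as $\alpha\to0$ and $c_\alpha\to0$ simultaneously. The HMM-specific input that makes this possible --- the transfer from a bare strong law to a quantitative $r$-quick statement with a uniform-in-$k$ moment bound --- is precisely what Lemma~\ref{lem1} and \eqref{cas1} provide.
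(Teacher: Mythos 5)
Your proposal is correct and follows essentially the same route as the paper: a Markov-inequality/change-of-measure lower bound (the paper's Lemma~\ref{Lem:A1}) lifted to $m$-th moments by Jensen, and an upper bound obtained by dominating $(T_{A_\alpha}-k)^+$ by the one-sided crossing time of the prior-shifted log-likelihood ratio over the level $\log(A_\alpha/\omega_k^\alpha)$, controlled via the last-entry times of Lemma~\ref{lem1}(ii) and the uniform summability \eqref{cas1}. The only imprecision is where you locate condition \eqref{Prior3hmm}: it plays no role in the lower bound (which needs only \eqref{expon} with $c_\alpha\to0$), but it is indispensable in the upper bound, where the thresholds $\log(A_\alpha/\omega_k^\alpha)$ produce the averaged contribution $\sum_k\omega_k^\alpha|\log\omega_k^\alpha|^r$ that \eqref{Prior3hmm} forces to be $o(|\log\alpha|^r)$ --- the hypothesis $c_\alpha\to0$ alone would not absorb it.
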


\begin{proof}
We first establish an asymptotic lower bound for moments of the detection delay in class $\classA$. 
For $0<\varepsilon <1$ and small positive $\delta_\alpha$, 
let $N_{\alpha,\varepsilon}= (1-\varepsilon) |\log\alpha|/(\Kc+c_\alpha+\delta_\alpha)$. 
By the Markov inequality, for any stopping time $T$,
\[
\ADD(T) \ge \E^\omega(T-\nu)^+  \ge N_{\alpha,\varepsilon}\brcs{\Ps^\omega(T-\nu \ge 0) 
- \Ps^\omega(0 \le T-\nu < N_{\alpha,\varepsilon})}.
\]
Since for any $T\in\classA$, $\Ps^\omega(T \ge \nu) \ge 1-\alpha$, we have 
\begin{equation} \label{ADDlower}
\inf_{T\in\classA}\ADD(T) \ge N_{\alpha,\varepsilon}\brcs{1- \alpha - \sup_{T\in \classA}
\Ps^\omega(0 \le T-\nu < N_{\alpha,\varepsilon})}.
\end{equation}
It follows from Lemma~\ref{lem1}(i) that $n^{-1}S_{k+n-1}^{k}$ converges to $\Kc$ almost surely under $\Ps_k$, and therefore, for all $\varepsilon >0$ and $k\ge 1$
\begin{equation}\label{Pmax2}
\Ps_k\brc{\frac{1}{M} \max_{1 \le n \le M}S_{k+n-1}^k \ge (1+\varepsilon) \Kc} \to 0 \quad \text{as}~ M\to\infty.
\end{equation}
So we can apply Lemma~\ref{Lem:A1} in the Appendix (see Section \ref{s:Proofs}), according to which for any 
$\varepsilon \in (0,1)$,
\[
\sup_{T\in \classA}\Ps^\omega(0 \le  T-\nu < N_{\alpha,\varepsilon}) \to 0 \quad \text{as}~ \alpha\to 0.
\]
This along with \eqref{ADDlower} yields
\[
\inf_{T\in\classA} \ADD(T) \ge (1-\varepsilon) \frac{|\log\alpha|}{\Kc} (1+o(1)) \quad \text{as}~ \alpha\to0.
\]
Now, by Jensen's inequality, for any $m\ge 1$, $\E^\omega[(T-\nu)^m | T \ge\nu] \ge [\ADD(T)]^m$, which yields 
\[
\inf_{T\in\classA} \E^\omega[(T-\nu)^m | T\ge\nu] \ge (1-\varepsilon)^m \brc{\frac{|\log\alpha|}{\Kc}}^m (1+o(1)).
\]
Since $\varepsilon$ can be arbitrarily small, we obtain the asymptotic lower bound 
\begin{equation}\label{Lowergendalpha}
\inf_{T\in\classA} \E^\omega[(T-\nu)^m | T \ge \nu] \ge \brc{\frac{|\log\alpha|}{\Kc}}^m (1+o(1)) \quad \text{as}~ \alpha\to0,
\end{equation}
which holds for all $m>0$.

We now show that under conditions C1--C2 and $\E_1|S_1^1|^{r+1} < \infty$, the following asymptotic upper bound holds:
\begin{equation}\label{UpperTA}
\E^\omega[(T_{A_\alpha}-\nu)^r | T_{A_\alpha} \ge\nu] \le \brc{\frac{|\log\alpha|}{\Kc}}^r (1+o(1))  \quad \text{as}~ \alpha\to0
\end{equation}
as long as $\log A_\alpha\sim |\log\alpha|$. Obviously, this upper bound together with the previous lower bound proves the assertion of the theorem.

Evidently, for any $A>0$ and $k\ge 0$,
\[
(T_A-k)^+ \le \eta_A(k) = \inf\set{n \ge 1: \twS_{k+n-1}^k \ge \log(A/\omega_k^\alpha)},
\]
where $\twS_{k+n-1}^k = S_{k+n-1}^k -\log \Ps(\nu> n+k-1)$. Thus, 
\[
\E^\omega[(T_{A_\alpha}-\nu)^m | T_{A_\alpha} \ge \nu] \le \frac{\sum_{k=0}^\infty \omega_k^\alpha \E_k[\eta_{A_\alpha}(k)]^m}{1-\alpha}
\]
and to establish inequality \eqref{UpperTA} it suffices to show that 
\begin{equation}\label{Uppereta}
\sum_{k=0}^\infty \omega_k^\alpha \E_k[\eta_{A_\alpha}(k)]^r \le \brc{\frac{|\log\alpha|}{\Kc}}^r (1+o(1))  \quad \text{as}~ \alpha\to0.
\end{equation}
Define the last entry time 
\[
\tilde{\tau}_{\varepsilon,k} = \sup \set{n \ge 1: n^{-1} \twS_{k+n-1}^k - \Kc 
-c_\alpha < - \varepsilon} \quad (\sup\{\varnothing\} = 0) .
\]
It is easy to see that 
\[
(\eta_{A_\alpha}(k)-1)(\Kc+c_\alpha-\varepsilon) \le \twS_{k+\eta_{A_\alpha}(k)-2}^k <  \log(A_\alpha/\omega_k^\alpha) \quad \text{on}~ \set{\tilde{\tau}_{\varepsilon,k} +1 < \eta_{A_\alpha}(k) < \infty }.
\]
It follows that for every $0 < \varepsilon < \Kc+c_\alpha$
\[
\begin{aligned}
\brcs{\eta_{A_\alpha}(k)}^r  & \le  \brcs{1+ \frac{\log (A_\alpha/\omega_k^\alpha)}{\Kc+c_\alpha -\varepsilon} \Ind{\eta_{A_\alpha}(k)
> \tilde{\tau}_{\varepsilon,k}+1} + (\tilde{\tau}_{\varepsilon,k}+1) \Ind{\eta_{A_\alpha}(k) \le \tilde{\tau}_{\varepsilon,k}}}^r
\\
& \le  \brc{\frac{\log (A_\alpha/\omega_k^\alpha)}{\Kc+c_\alpha-\varepsilon} +\tilde{\tau}_{\varepsilon,k}+2}^r.
\end{aligned}
\]

Since $-n^{-1} \log \Ps(\nu>k+n-1) \to c_\alpha$ as $n \to \infty$, by \eqref{cas}--\eqref{cas1}
\[
\sum_{k=0}^\infty \omega_k^\alpha \E_k(\tilde\tau_{\varepsilon, k})^r  <\infty \quad \text{for all}~ \varepsilon >0, 
\]
and using condition \eqref{Prior3hmm} and the fact that $\log A_\alpha\sim |\log\alpha|$, we finally obtain that for any $0<\varepsilon < \Kc$
\[
\sum_{k=0}^\infty \omega_k^\alpha \E_k[\eta_{A_\alpha}(k)]^r \le \brc{\frac{|\log\alpha|}{\Kc-\varepsilon}}^r (1+o(1))  \quad \text{as}~ \alpha\to 0.
\]
Since $\varepsilon$ is an arbitrary number, the upper bound \eqref{Uppereta} follows and the proof is complete.
\end{proof}

 \begin{remark}
If the prior distribution is geometric \eqref{Geom}, then Theorem~\ref{Th:th2} holds whenever the parameter $\rho=\rho_\alpha\to0$ so that $|\log \rho_\alpha|/|\log\alpha| \to 0$ as $\alpha\to0$. Indeed, in this case, 
\[
\begin{aligned}
\sum_{k=0}^\infty |\log \omega_k|^r \omega_k & \le \omega_0|\log \omega_0|^r  + (1-\omega_0) \brc{ |\log(1-\omega_0)|^r +  |\log \rho_\alpha|^r + |\log (1-\rho_\alpha)|^r/\rho_\alpha}
\\
& \sim (1-\omega_0) |\log \rho_\alpha|^r.
\end{aligned}
\]
\end{remark}

\section{Asymptotic Performance of the Generalized Shiryaev--Roberts Procedure} \label{s:SR}

In the case where the prior distribution of the change point is geometric \eqref{Geom} with 
$\omega_0=0$, the statistic $R_{\omega,n}/\rho$ converges as $\rho \to0$ to the so-called Shiryaev--Roberts (SR) statistic 
\[
R_n= \sum_{k=1}^n \prod_{i=k}^n \Lambda_i = \sum_{k=1}^n \exp\set{S_n^k}, \quad R_0=0.
\]
More generally, if the prior distribution is zero-modified geometric \eqref{Geom}  with $\omega_0=\omega_0(\rho) >0$ and 
$\lim_{\rho\to0} \omega_0(\rho)/\rho = \ell$, then $R_{\omega,n}/\rho$ converges as $\rho \to0$ to the statistic 
\begin{equation}\label{GSR}
R_n^\ell= \ell \exp\set{S_n^1}  + \sum_{k=1}^n \exp\set{S_n^k}, \quad R_0^\ell = \ell
\end{equation}
that starts from $\ell$ ($\ell \ge 0$). Therefore, consider a generalized version of the SR procedure assuming that the SR statistic $R_n$ is being initialized not from 
zero but from a point $R_0=\ell$, 
which is a basis for the so-called SR-$r$ procedure introduced in \cite{Moustakidesetal-SS11,TNBbook14}.   In the present paper, we refer to this statistic as the 
generalized SR  statistic and to the corresponding stopping time
\begin{equation}\label{SRproc}
\wtT_B^\ell = \inf\set{n\ge 1: R_n^\ell \ge B}, \quad B>0
\end{equation} 
as the {\em generalized SR} (GSR) detection procedure. For the sake of brevity, we omit the superscript $\ell$  and use the notation $R_n$ and $\wtT_B$ in the following.

In contrast to the Shiryaev statistic, the GSR statistic mixes the likelihood ratios according  to the uniform improper prior but not to the given prior. 
So it is intuitively expected that the GSR procedure
is not asymptotically optimal in cases where the exponent $c>0$, but is asymptotically optimal if 
$c\to0$. Tartakovsky and Veeravalli~\cite{TartakovskyVeerTVP05} proved that this is indeed true for the conventional SR procedure with $\ell=0$ in the general non-i.i.d.\ case 
when the prior distribution is geometric. 
In this section, we show that this is true for finite-state HMMs and general prior distributions with finite mean.

Note that the GSR statistic $R_n$ is a $\Ps_\infty$-submartingale with mean $\E_\infty R_n =n+\ell$. Thus,  using the Doob submartingale inequality, we obtain
 \[
 \Ps_\infty(\wtT_B< k) =\Ps_\infty\brc{\max_{1\le i \le k-1} R_i \ge B} \le (k-1+\ell)/B, \quad k\ge 1,
 \]
so that the probability of false alarm of the SR procedure can be upper-bounded as
\begin{equation}\label{PFASR}
\PFA(\wtT_B) =\sum_{k=1}^\infty \omega_k \Ps_\infty(\wtT_B < k) \le (\bar{\nu}-1+\ell) /B ,
\end{equation}
where $\bar{\nu} = \sum_{k=1}^\infty k \, \omega_k$ is the mean of the prior distribution. Therefore, assuming that $\bar{\nu}<\infty$, 
we obtain that setting $B=B_\alpha = (\bar\nu-1+\ell)/\alpha$ implies $\wtT_{B_\alpha} \in \classA$. 
If the prior distribution is zero-modified geometric with $\omega_0=\rho \, \ell$, then $\PFA(\wtT_B) \le (1-\rho)/ \rho B$. 

The following theorem establishes asymptotic operating characteristics of the GSR procedure.

\begin{theorem}\label{Th:SRAO}
Let $r\ge 1$. Assume that conditions {\rm C1--C2} are satisfied and that, in addition, $\E_1|S_1^1|^{r+1} < \infty$. 

{\rm \bf (i)}  Suppose that the mean of the prior distribution is finite, $\bar\nu <\infty$.  If $B=B_\alpha=(\bar\nu-1+\ell)/\alpha$, then $\wtT_{B_\alpha}\in\classA$ and for all $0<m \le r$,
\begin{equation}\label{MomentsSR}
\lim_{\alpha\to0} \frac{\E^\omega[(\wtT_{B_\alpha}-\nu)^m | \wtT_{B_\alpha} \ge \nu]}{|\log \alpha|^m} =\frac{1}{\Kc^m} .
\end{equation}

{\rm \bf (ii)}  Let  the prior distribution $\{\omega_k^\alpha\}_{k\ge 0}$ satisfy condition \eqref{expon} with $c=c_\alpha\to 0$ as $\alpha\to 0$ in such a way that 
\begin{equation}\label{PriorSRhmm}
\lim_{\alpha\to 0} \frac{{\bar\nu_\alpha}}{|\log \alpha|} = 0.
\end{equation} 
Let the initial value $\ell$ be either fixed or $\ell=\ell_\alpha$ depends on $\alpha$ so that $\ell_\alpha/|\log\alpha| =o(1)$ as $\alpha\to0$.
If $B=B_\alpha=(\bar\nu_\alpha-1 +\ell)/\alpha$, then $\wtT_{B_\alpha}\in\classA$ and for all $0<m \le r$.
\begin{equation} \label{MomentsSR1}
\E^\omega[(\wtT_{B_\alpha}-\nu)^m | \wtT_{B _\alpha} \ge  \nu] \sim  \brc{\frac{|\log\alpha|}{\Kc}}^m \sim \inf_{T \in \classA} \E^\omega[(T-\nu)^m | T \ge  \nu] \quad \text{as}~ \alpha \to 0.
\end{equation}

Both assertions {\rm (i)} and {\rm (ii)} also hold if $B=B_\alpha$ is selected so that $\PFA(T_{B_\alpha}) \le \alpha$ and $\log B_\alpha\sim |\log\alpha|$ as $\alpha\to0$.
\end{theorem}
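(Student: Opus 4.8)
The plan is to squeeze the conditional moments $\E^\omega[(\wtT_{B_\alpha}-\nu)^m\mid \wtT_{B_\alpha}\ge\nu]$ between matching upper and lower bounds; the squeeze yields the exact constant in part~(i) and asymptotic optimality in part~(ii). Membership $\wtT_{B_\alpha}\in\classA$ is immediate from the submartingale estimate \eqref{PFASR} together with the choice $B_\alpha=(\bar\nu-1+\ell)/\alpha$ (respectively $B_\alpha=(\bar\nu_\alpha-1+\ell)/\alpha$), so only the moments remain. The feature distinguishing the GSR rule \eqref{GSR}--\eqref{SRproc} from the Shiryaev rule is that it mixes the likelihood ratios with \emph{uniform} weights, so the relevant normalizing level is $\log B$ rather than $\log(B/\omega_k)$; this is exactly why the resulting constant is $\Kc$ and not $\Kc+c$.

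For the upper bound, common to both parts, I would exploit positivity of the summands in \eqref{GSR}: since $R_n^\ell\ge e^{S_n^k}$ for every $k\le n$, one has $(\wtT_{B_\alpha}-k)^+\le \eta_{B_\alpha}(k):=\inf\{n\ge1: S_{k+n-1}^k\ge \log B_\alpha\}$, now \emph{without} any prior correction in the threshold. With the last-entry time $\tau_{\varepsilon,k}$ from \eqref{LETLLRgen}, the last-entry-time bound used in the proof of Theorem~\ref{Th:th2} gives $\eta_{B_\alpha}(k)\le \log B_\alpha/(\Kc-\varepsilon)+\tau_{\varepsilon,k}+C$, whence $[\eta_{B_\alpha}(k)]^r\le \brc{\log B_\alpha/(\Kc-\varepsilon)+\tau_{\varepsilon,k}+C}^r$. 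Averaging over $k$ with the prior weights, invoking the uniform $r$-quick convergence \eqref{cas}--\eqref{cas1} of Lemma~\ref{lem1} to control $\sum_k\omega_k\E_k(\tau_{\varepsilon,k})^r$, and using $\log B_\alpha\sim|\log\alpha|$ (which holds for both threshold choices under the stated growth conditions on $\bar\nu$, $\bar\nu_\alpha$ and $\ell$), I obtain $\E^\omega[(\wtT_{B_\alpha}-\nu)^m\mid\wtT_{B_\alpha}\ge\nu]\le (|\log\alpha|/\Kc)^m(1+o(1))$ upon letting $\varepsilon\downarrow0$.

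For part~(ii) the matching lower bound is free. Since $c_\alpha\to0$, the universal class-$\classA$ bound established in the first half of the proof of Theorem~\ref{Th:th2}, via the maximal-probability estimate and Lemma~\ref{Lem:A1}, yields $\inf_{T\in\classA}\E^\omega[(T-\nu)^m\mid T\ge\nu]\ge(|\log\alpha|/\Kc)^m(1+o(1))$; combining this with the upper bound proves \eqref{MomentsSR1}. The conditions $\bar\nu_\alpha/|\log\alpha|\to0$ and $\ell_\alpha/|\log\alpha|=o(1)$ are needed only to guarantee $\log B_\alpha\sim|\log\alpha|$.

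The genuine obstacle is the lower bound in part~(i), where $c$ may be strictly positive: there the universal bound delivers only the weaker constant $\Kc+c$, so one must show by a \emph{procedure-specific} argument that the GSR rule does no better than $\Kc$. The idea is to bound the statistic from above: since $R_n^\ell\le(n+\ell)\,e^{M_n}$ with $M_n:=\max_{1\le j\le n}S_n^j$, the rule cannot stop before $M_n$ reaches $\log B_\alpha-\log(n+\ell)$. Writing, under $\Ps_k$, $M_n-S_n^k=\max\bl(\max_{1\le j<k}S_{k-1}^j,\,0,\,\max_{k<j\le n}(-S_{j-1}^k)\br)$, both inner maxima are running maxima of Markov random walks with the ``wrong'' drift sign (negative drift for the pre-change segments, which are distributed as under $\Ps_\infty$, and minus the positive-drift post-change segment), hence are dominated by an a.s.\ finite $G$ with $\sup_{k}\E_kG^r<\infty$ under {\rm C1--C2} and $\E_1|S_1^1|^{r+1}<\infty$; this is where the ladder-height moment estimates for Markov random walks of Fuh and Zhang~\cite{FuhZhang00} enter, and this step I expect to be the main difficulty. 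Consequently $(\wtT_{B_\alpha}-k)^+\ge\inf\{n\ge1:S_{k+n-1}^k\ge\log B_\alpha-\log(k+n-1+\ell)-G\}$, and since $\log(k+n-1+\ell)+G=o(\log B_\alpha)$ in the regime $n=O(\log B_\alpha)$, the upper $r$-quick estimate on $n^{-1}S_{k+n-1}^k$ from Lemma~\ref{lem1} forces $(\wtT_{B_\alpha}-k)^+\ge \log B_\alpha/(\Kc+\varepsilon)\,(1+o(1))$ off an event of vanishing probability. Transferring this high-probability bound to the $m$-th conditional moment by a reverse Markov inequality, averaging over the prior (using $\Ps^\omega(\wtT_{B_\alpha}\ge\nu)\ge1-\alpha\to1$), and letting $\varepsilon\downarrow0$ completes the lower bound and hence \eqref{MomentsSR}.
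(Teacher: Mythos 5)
Your proposal is correct in outline and, for most of its components, coincides with the paper's proof: membership in $\classA$ via the submartingale bound \eqref{PFASR}; the upper bound via $(\wtT_{B_\alpha}-k)^+\le\inf\set{n\ge1: S^k_{k+n-1}\ge\log B_\alpha}$ followed by the last-entry-time argument and the uniform $r$-quick convergence \eqref{cas}--\eqref{cas1}; and, for part (ii), reuse of the class-wide lower bound \eqref{Lowergendalpha} from the proof of Theorem~\ref{Th:th2}. You also correctly isolate the one genuinely procedure-specific step, the lower bound in part (i) when $c>0$. There, however, your route differs from the paper's. You dominate the statistic pathwise, $R_n^\ell\le(n+\ell)e^{M_n}$ with $M_n=\max_{1\le j\le n}S_n^j$, and then must control $M_n-S_n^k$ under $\Ps_k$ by the all-time maxima of two negative-drift Markov random walks; this requires an auxiliary lemma (uniform tightness of $\max_{j\le k}S^j_{k-1}$ and $\sup_{j>k}(-S^k_{j-1})$ in $k$ -- note that for the probability estimate you only need tightness, not the $r$-th moments you invoke) which you flag as the main difficulty and leave unproved. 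The paper instead adapts the change-of-measure argument of Lemma~\ref{Lem:A1}: on the event where $\max_{1\le n\le\widetilde{N}_{\alpha,\varepsilon}}S^k_{k+n-1}<(1+\varepsilon)\Kc\widetilde{N}_{\alpha,\varepsilon}$ the likelihood ratio $\d\Ps_k/\d\Ps_\infty$ on $\Fc_{\wtT_{B_\alpha}}$ is at most $e^{(1+\varepsilon)\Kc\widetilde{N}_{\alpha,\varepsilon}}$, while Doob's inequality for the $\Ps_\infty$-submartingale $R_n^\ell$ (the same fact behind \eqref{PFASR}) gives $\Ps_\infty(\wtT_{B_\alpha}<k+\widetilde{N}_{\alpha,\varepsilon})\le(k+\widetilde{N}_{\alpha,\varepsilon}-1+\ell)/B_\alpha$; combining these yields \eqref{ProbTleSR}--\eqref{pkle} and then Jensen's inequality finishes. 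The paper's version is more economical, needing nothing beyond the submartingale property and the maximal SLLN \eqref{Pmax3}; yours is a legitimate alternative but hinges on the unproved ladder/maximum estimate. One further small point: in your part (i) lower bound the averaging over the prior still requires truncating at some $K_\alpha\to\infty$ with $\Ps(\nu>K_\alpha)\to0$, so $\bar\nu<\infty$ is used there as well, not only to ensure $\log B_\alpha\sim|\log\alpha|$.
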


\begin{proof}
(i) First, we establish the following lower bound for moments of the detection delay of arbitrary order $m\ge 1$:
\begin{equation}\label{LowerSR}
\E^\omega[(\wtT_{B_\alpha}-\nu)^m | \wtT_{B_\alpha} \ge\nu]  \ge \brc{\frac{|\log\alpha|}{\Kc}}^m (1+o(1))  \quad \text{as}~ \alpha\to0,
\end{equation}
assuming that $\log B_\alpha \sim |\log\alpha|$. In particular, we may select $B_\alpha=(\bar\nu-1+\ell)/\alpha$.

Let $\widetilde{N}_{\alpha,\varepsilon}=(1-\varepsilon)|\log\alpha|/\Kc$. Similarly to \eqref{ADDlower} we have 
\begin{equation} \label{ADDlowerSR}
\ADD(\wtT_{B_\alpha}) \ge \widetilde{N}_{\alpha,\varepsilon}\brcs{1- \alpha - \Ps^\omega
(0 \le \wtT_{B_\alpha}-\nu < \widetilde{N}_{\alpha,\varepsilon})}.
\end{equation}
It follows from Lemma~\ref{lem1}(i) that $n^{-1}S_{k+n-1}^{k}\to\Kc$ almost surely under $\Ps_k$, and therefore, for all $\varepsilon >0$ and $k\ge 1$
\begin{equation}\label{Pmax3}
\tilde\beta_k(\alpha,\varepsilon) = \Ps_k\brc{\frac{1}{\widetilde{N}_{\alpha,\varepsilon}} \max_{1 \le n \le \widetilde{N}_{\alpha,\varepsilon}}S_{k+n-1}^k \ge (1+\varepsilon) \Kc} \to 0 \quad \text{as}~ \alpha\to0.
\end{equation}
By analogy with \eqref{ProbTle} in the proof of Lemma~\ref{Lem:A1} (see the appendix, Section~\ref{ss:AppA1}),
\begin{equation}\label{ProbTleSR}
\Ps_k\brc{0 \le \wtT_{B_\alpha}-k < \widetilde{N}_{\alpha,\varepsilon}} \le  \tilde{p}_k(\alpha,\varepsilon)  + \tilde{\beta}_k(\alpha,\varepsilon) ,
\end{equation}
where $\tilde{p}_k(\alpha,\varepsilon)$ can be shown to be upper-bounded as
\begin{equation}\label{pkle}
\tilde{p}_k(\alpha,\varepsilon) \le  \frac{k+ \ell+\widetilde{N}_{\alpha,\varepsilon}}{B_\alpha^{\varepsilon^2}}= e^{-\varepsilon^2\log B_\alpha} \brcs{k +\ell +\frac{(1-\varepsilon)|\log\alpha|}{\Kc}}.
\end{equation}
Let $K_\alpha$ be an integer that goes to infinity as $\alpha\to0$. Using \eqref{ProbTleSR} and \eqref{pkle}, we obtain
\[
\begin{aligned}
 & \Ps^\omega(0 < \wtT_{B_\alpha}-\nu < \widetilde{N}_{\alpha,\varepsilon})  =\sum_{k=0}^\infty \omega_k \Ps_k\brc{0 \le  \wtT_{B_\alpha}-k < \widetilde{N}_{\alpha,\varepsilon}}
 \\
 & \le  \Ps(\nu > K_{\alpha}) + \sum_{k=0}^{K_{\alpha}} \omega_k \Ps_k\brc{0 \le  \wtT_{B_\alpha}-k < \widetilde{N}_{\alpha,\varepsilon}}
 \\
 &  \le \Ps(\nu > K_{\alpha}) +  \sum_{k=0}^{K_{\alpha}}  \omega_k  \tilde\beta_k(\alpha,\varepsilon) 
 + e^{-\varepsilon^2\log B_\alpha} \brcs{\bar\nu +\ell +\frac{(1-\varepsilon)|\log\alpha|}{\Kc}} .
  \end{aligned}
 \]
 The first term goes to zero since $\bar\nu$ is finite. The second term goes to zero due to \eqref{Pmax3}, and the third term also goes to zero since $\lim_{\alpha\to0}\log B_\alpha/|\log\alpha|=1$. Therefore,
\[
\lim_{\alpha\to0} \Ps^\omega(0 \le  \wtT_{B_\alpha}-\nu < \widetilde{N}_{\alpha,\varepsilon})=0,
\]
and using \eqref{ADDlowerSR} and Jensen's inequality, we obtain
\[
\E^\omega[( \wtT_{B_\alpha}-\nu)^m |  \wtT_{B_\alpha} \ge\nu] \ge (1-\varepsilon)^m \brc{\frac{|\log\alpha|}{\Kc}}^m (1+o(1)).
\]
Since $\varepsilon$ can be arbitrarily small, the asymptotic lower bound \eqref{LowerSR} follows.

We now show that if $\log B_\alpha \sim |\log\alpha|$ (in particular, we may set $B_\alpha=(\bar\nu-1+\ell)/\alpha$), then for $m \le r$
\begin{equation}\label{UpperTB}
\E^\omega[(\wtT_{B_\alpha}-\nu)^m | \wtT_{B_\alpha} \ge \nu]  \le \brc{\frac{|\log\alpha|}{\Kc}}^m (1+o(1))  \quad \text{as}~ \alpha\to0
\end{equation}
whenever conditions {\rm C1--C2}  hold and $\E_1|S_1^1|^{r+1} < \infty$. This, obviously, will complete the proof of \eqref{MomentsSR}.

Observe that  for any $B>0$, $\ell \ge 0$, and $k\ge 0$,
\[
(\wtT_B-k)^+ \le \tilde\eta_B(k) = \inf\set{n \ge 1: S_{k+n-1}^k \ge \log B},
\]
Therefore, 
\[
\E^\omega[(\wtT_{B_\alpha}-\nu)^m | \wtT_{B_\alpha} \ge \nu] \le \frac{\sum_{k=0}^\infty \omega_k \E_k[\tilde\eta_{B_\alpha}(k)]^m}{1-\alpha},
\]
so that inequality \eqref{UpperTB} holds whenever 
\begin{equation}\label{UpperetaB}
\sum_{k=0}^\infty \omega_k \E_k[\tilde\eta_{B_\alpha}(k)]^r \le \brc{\frac{|\log\alpha|}{\Kc}}^r (1+o(1))  \quad \text{as}~ \alpha\to0.
\end{equation}
Define the last entry time 
\[
\hat{\tau}_{\varepsilon,k} = \sup \set{n \ge 1: n^{-1} S_{k+n-1}^k - \Kc -c < - \varepsilon} \quad (\sup\{\varnothing\} = 0) .
\]
Evidently, 
\[
(\tilde\eta_{B_\alpha}(k)-1)(\Kc+c-\varepsilon) \le S_{k+\tilde\eta_{B_\alpha}(k)-2}^k <  \log B_\alpha \quad \text{on}~ \set{\hat{\tau}_{\varepsilon,k} +1 < \tilde\eta_{B_\alpha}(k) < \infty} ,
\]
so that for every $0 < \varepsilon < \Kc+c$
\begin{equation}\label{Upperforeta}
\begin{aligned}
\brcs{\tilde\eta_{B_\alpha}(k)}^r  & \le  \brcs{1+ \frac{\log B_\alpha}{\Kc+c -\varepsilon} \Ind{\tilde\eta_{B_\alpha}(k)
> \hat{\tau}_{\varepsilon,k}+1} + (\hat{\tau}_{\varepsilon,k}+1) \Ind{\tilde\eta_{B_\alpha}(k) \le \hat{\tau}_{\varepsilon,k}}}^r
\\
& \le  \brc{\frac{\log B_\alpha}{\Kc+c-\varepsilon} +\hat{\tau}_{\varepsilon,k}+2}^r.
\end{aligned}
\end{equation}
By \eqref{cas}--\eqref{cas1},
\[
\sum_{k=0}^\infty \omega_k \E_k(\hat\tau_{\varepsilon, k})^r  <\infty \quad \text{for all}~ \varepsilon >0, 
\]
so using condition \eqref{Prior3hmm} and the fact that $\log B_\alpha\sim |\log\alpha|$, we conclude that for any $0<\varepsilon < \Kc$
\[
\sum_{k=0}^\infty \omega_k \E_k[\tilde\eta_{B_\alpha}(k)]^r \le \brc{\frac{|\log\alpha|}{\Kc-\varepsilon}}^r (1+o(1))  \quad \text{as}~ \alpha\to 0.
\]
Since $\varepsilon$ can be arbitrarily small, the upper bound \eqref{UpperetaB} follows, which implies the upper bound \eqref{UpperTB}.

(ii) Recall first that in the proof of Theorem~\ref{Th:th2} we established the asymptotic lower bound \eqref{Lowergendalpha}, which holds as long as conditions C1 and C2 are satisfied. 
Therefore, we need only to prove the asymptotic upper bound \eqref{UpperTB} under conditions postulated in (ii). Similarly to \eqref{Upperforeta} we have
\begin{equation*}
[(\wtT_{B_\alpha}-k)^+]^r \le  \brc{\frac{\log B_\alpha}{\Kc+c_\alpha-\varepsilon} +\hat{\tau}_{\varepsilon,k}+2}^r = 
\brc{\frac{\log [(\bar\nu_\alpha-1+\ell)/\alpha]}{\Kc+c_\alpha-\varepsilon} +\hat{\tau}_{\varepsilon,k}+2}^r.
\end{equation*}
Again, by \eqref{cas}--\eqref{cas1},
\[
\sum_{k=0}^\infty \omega_k^\alpha \E_k(\hat\tau_{\varepsilon, k})^r  <\infty \quad \text{for all}~ \varepsilon >0.
\]
Using condition \eqref{PriorSRhmm} and the fact that $\lim_{\alpha\to0} \ell/|\log\alpha| =0$, we obtain that for any $0<\varepsilon < \Kc$
\[
\sum_{k=0}^\infty \omega_k^\alpha [(\wtT_{B_\alpha}-k)^+]^r \le \brc{\frac{|\log\alpha|}{\Kc-\varepsilon}}^r (1+o(1))  \quad \text{as}~ \alpha\to 0.
\]
Since $\varepsilon$ can be arbitrarily small, the upper bound \eqref{UpperTB} follows and the proof is complete.
\end{proof}

Theorem~\ref{Th:SRAO} shows that the GSR procedure is asymptotically optimal as $\alpha\to0$ in class
$\classA$, minimizing moments of the detection delay up to order $r$, 
only for heavy-tailed priors when $c=0$ or 
for priors with exponential tails $(c>0$) when the exponent $c=c_\alpha$ vanishes as $\alpha\to0$. As mentioned above, this is expected since the GSR procedure exploits the uniform 
improper prior distribution of the change point over positive integers.

\section{ Higher Order Asymptotic Approximations to the Average Detection Delay }\label{s:HOAO}

Note that when $m=1$ in \eqref{Momentsgen} and \eqref{MomentsSR}, we obtain the following first-order asymptotic approximations to the average detection delay of the Shiryaev and 
GSR procedures
\begin{equation}\label{ADDgen}
\ADD(T_A) = \brc{\frac{\log A}{\Kc + c}} (1+o(1)), \quad \ADD(\wtT_B) = \brc{\frac{\log B}{\Kc}} (1+o(1)) \quad  \text{as $A, B \to \infty$}.
\end{equation}
These approximations hold as long as conditions C1--C2 and the moment condition 
$\E_1|S_1^1|^{2} < \infty$ are satisfied. 

In this section, we  derive high-order approximations to the ADD of these procedures up to a vanishing term $o(1)$ based on the Markov nonlinear renewal theory, 
assuming that the prior distribution of the change point is  zero-modified geometric  \eqref{Geom}. 

\subsection{The Shiryaev Procedure}\label{HOAShiryaev}

 Define the statistic $R_n^\rho= R_{n,\omega}/\rho$, which is given by the recursion 
\begin{equation}\label{Rrecursion}
R_n^\rho= (1+R_{n-1}^\rho) \Lambda_n^\rho, \quad n \ge 1, ~~ R_0^\rho= \omega_0/(1-\omega_0)\rho,
\end{equation}
where 
\[
\Lambda_n^\rho= \frac{e^{g(W_{n-1},W_n)}}{1-\rho} = e^{g_\rho(W_{n-1},W_n)}, \quad g_\rho(W_{i-1},W_{i}) = g(W_{i-1},W_{i}) + |\log(1-\rho)| . 
\]
Obviously, the Shiryaev procedure can be written as
\[
T_A = \inf\set{n\ge 1: R_n^\rho \ge A/\rho}.
\]

Note that we have 
\begin{align*}
R_{n}^\rho & = (1 + R_{0}^\rho) \prod_{i=1}^{n} \Lambda_i^\rho + \brc{\prod_{i=1}^{n} \Lambda_i^\rho}  \sum_{j=1}^{n-1} \prod_{s=1}^{j} (\Lambda_s^\rho)^{-1} \\
& =\brc{1 + R_{0}^\rho + \sum_{j=1}^{n-1} (1-\rho)^{j} e^{-\sum_{s=1}^j g(W_{s-1},W_{s})}} \,  \prod_{i=1}^{n} \Lambda_i^\rho ,
\end{align*}
and hence,
\begin{equation} \label{logRn}
\log R_{n}^\rho = \sum_{i=1}^{n} g_\rho(W_{i-1},W_{i}) + \log \brc{1 + R_{0}^\rho + V_{n}} ,
\end{equation}
where 
\[
V_{n} = \sum_{j=1}^{n-1}  (1-\rho)^{j} e^{-\sum_{s=1}^j g(W_{s-1},W_{s})}.
\]

Let $a=\log(A/\rho)$. Clearly, the stopping time $T_A=T_a$ can be equivalently written as
\[
T_a =\inf\set{n\ge 1: \log R_{n}^\rho \ge a},  
\]
which by \eqref{logRn} can be also written as 
\begin{equation}\label{Tn1}
T_a = \inf \{n\ge 1: \widetilde{S}_{n}^\rho + \eta_{n} \ge a\},
\end{equation}
where $\eta_{n} = \log \brc{1+ R_{0}^\rho +V_{n}}$  and $\widetilde{S}_{n}^{\rho} =\sum_{i=1}^{n} g_\rho(W_{i-1},W_{i})=S_n +n |\log (1-\rho)|$, $n\ge 1$.  Here $S_n$ denotes the partial sums $\sum_{i=1}^{n} g(W_{i-1},W_{i})$, $n \ge 1$. 
Note that in \eqref{Tn1} the initial condition $W_0$ can be an arbitrary fixed number or a random variable.
Obviously, $\{\widetilde{S}_{n}^{\rho}\}_{n\ge 1}$  is a Markov random walk with stationary 
mean $\E^{\Pi} \widetilde{S}_{1}^{\rho} = \Kc+|\log (1-\rho)|$, where   $\E^{\Pi}(\cdot)= \int \E_1(\cdot | W_0=w) d\Pi(w)$ denotes the expectation of the extended Markov chain $\{W_n\}_{n\ge 0}$ 
under the invariant measure $\Pi$. Let $\chi_a = \widetilde{S}_{n}^{\rho} + \eta_{T_a} -a$ be a corresponding overshoot. Then we have
\begin{equation}\label{ExpS}
\E_1 [\widetilde{S}^{\rho}_{T_a}|W_{0}=w] = a + \E_1 [\chi(a)|W_{0}=w] - \E_1[\eta_{T_a}|W_{0}=w].
\end{equation}

For $b>0$, define
\begin{equation}\label{5.5}
N_b=\inf \{n \ge 1: \widetilde{S}_{n}^{\rho} \ge b\},
\end{equation}
and let $\kappa_b= \widetilde{S}_{n}^{\rho} - b$ (on $\{N_b < \infty\}$) 
denote the overshoot of the statistic
$\widetilde{S}_{n}^{\rho}$ crossing the threshold $b$ at time $n=N_b$. When $b=0$, 
we denote $N_b$ in (\ref{5.5}) as $N_+$. For a given $w \in {\cal X}$, let
\begin{equation}
G(y,\rho, \Kc)=\lim_{b\rightarrow \infty} {\Ps}_1 \{ \kappa_b \le y |W_0= w \}
\end{equation}
be the limiting distribution of the overshoot. Note that this distribution does not depend on $w$.  

To approximate the expected value of $\kappa_b$, we need the following notation first.
Let ${\Ps}_{1,+}(w, B) = {\Ps}_{1,+}\{W_{N_b} \in B | W_0 = w \}$ denote the transition probability associated with the
Markov chain $\{W_n, n \ge 0\}$ generated by the ascending ladder variable
$\widetilde{S}_{n}^{\rho}$.  Under the $V$-uniform ergodicity condition (to be proved in the appendix) and $\E^\pi Y_1> 0$, a similar argument as on page 255 of Fuh and Lai \cite{FuhLai01}
yields that the transition probability ${\Ps}_{1,+}(w,  \cdot)$ has an invariant measure
$\Pi_+$. Let ${\E}^{\Pi_+}$ denote expectation under $W_0$ having the distribution $\Pi_+$.

It is known  that
\[ 
\lim_{b\rightarrow \infty} \E_1 (\kappa_b|W_0= w)
 = \int_0^\infty y \, \d G(y,\rho, \Kc) = \frac{{\E}^{\Pi_+} 
 (\widetilde{S}_{N_+}^{\rho})^2}{2 {\E}^{\Pi_+} \widetilde{S}_{N_+}^{\rho}}.
 \]
(cf. Theorem 1 of Fuh \cite{FuhAS04} and Proposition~\ref{P2} in the appendix).

Let us also define
\[ 
\zeta(\rho,\Kc) =\lim_{b\rightarrow \infty} \E_1 ( e^{- \kappa_b}|W_0= w )  = \int_0^\infty e^{-y} \, \d G(y,\rho, \Kc),
 \]
and 
\begin{equation}\label{ConstantC}
C(\rho,\Kc)=\E_1\set{\log \brcs{1+R_0^\rho+ \sum_{k=1}^{\infty}(1-\rho)^k e^{-S_k}}}.
\end{equation}

Note that by \eqref{Tn1},
\[ 
\widetilde{S}_{T_a}^{\rho} = a - \eta_{T_a} + \chi_a~~~\mbox{ on } \{T_a < \infty\},  
\]
where $\chi_a= \widetilde{S}_{T_a}^{\rho} + \eta_{T_a}-a$ is the overshoot of 
$\widetilde{S}_{n}^{\rho} + \eta_n$
crossing the boundary $a$ at time $T_a$. Taking the expectations on both sides, using \eqref{ExpS} and applying Wald's identity for products
of Markovian random matrices (cf.\ Theorem 2 of Fuh \cite{FuhAS03}), we obtain 
\begin{eqnarray}\label{4.7}
&~& (\Kc+|\log (1-\rho)|) \E_1( T_a|W_0= w)  + \int_{\cal X} \Delta(w') \, \d \Pi_+(w')- \Delta(w) \\
&~&~~~~~~= \E_1 ( S_{T_a}^\rho |W_0= w ) = a- \E_1 ( \eta_{T_a} |W_0= w )+ \E_1( \chi_a|W_0= w), \nonumber
\end{eqnarray}
where $\Delta : {\cal X} \rightarrow  R$ solves the Poisson equation 
\begin{eqnarray}\label{4.8}
\E_1 \brcs{\Delta(W_1)|W_0=w} - \Delta(w) = \E_1\brcs{S_1^\rho | W_0=w}  - \E^\Pi S_1^\rho
\end{eqnarray}
for almost every $w \in {\cal X}$ with $\E^\Pi \Delta (W_1) = 0$. 

The crucial observations are that the sequence $\{\eta_n, n\ge 1\}$ is slowly changing
and that $\eta_n$ converges $\Ps_1$-a.s. as $n \rightarrow \infty$ to the random variable
\begin{eqnarray}\label{4.9}
\eta =\log \left\{1+R_0^\rho+\sum_{k=1}^{\infty} (1-\rho)^{k} e^{- S_k} \right\}
\end{eqnarray}
with finite expectation ${\E}^{\Pi_+} \eta=C(\rho,\Kc)$, where $C(\rho,\Kc)$ is defined in \eqref{ConstantC}.  
An important consequence of the slowly changing property is that, under mild conditions,
the limiting distribution of the overshoot of a Markov random walk over a fixed threshold 
does not change by the
addition of a slowly changing nonlinear term (see Theorem 1 in Fuh \cite{FuhAS04}).

The mathematical details are given in Theorem~\ref{th3} below.
More importantly, Markov nonlinear renewal theory allows us to obtain an approximation
to $\mbox{PFA}(T_A)$, the probability of false alarm, that takes the overshoot into account. This approximation is
useful for practical applications where the value of $\Kc$ is moderate. (For small values of $\rho$ and $\Kc$ the
overshoot can be neglected.)

\begin{theorem}\label{th3} 
Let $Y_0,Y_1,\cdots,Y_n$ be a sequence of 
random variables from a hidden Markov model $\{Y_n, n \ge 0\}$. Assume {\rm C1--C2} hold. 
Let the prior distribution of the change point $\nu$ be the zero-modified geometric distribution \eqref{Geom}, 
and assume that ${S}_1$ is nonarithmetic with respect to $\Ps_{\infty}$ and $\Ps_1$. 

{\rm \bf (i)} If $0< \Kc < \infty$, then as $A \rightarrow \infty$ 
\begin{equation} \label{HOPFA}
\pfa(T_A)=\frac{\zeta(\rho,\Kc)}{A} (1+o(1)).
\end{equation}

{\rm \bf (ii)} If, in addition, the second moment  of the log-likelihood ratio is finite,
${\E}_1|S_1|^2 < \infty$, then for $w \in {\cal X}$, as $A \rightarrow \infty$ 
\begin{equation}\label{4.10}
\begin{split}
 {\E}_1 (T_A |W_0=w) &= \frac{1}{\Kc +|\log (1-\rho)|}
\bigg(\log \frac{A}{\rho}- {\E}^{\Pi_+}  \eta + \frac{{\E}^{\Pi_+} (\widetilde{S}_{N_+}^\rho)^2}
{2 {\E}^{\Pi_+} \widetilde{S}_{N_+}^\rho} \\
& \quad - \int_{\cal S} \Delta(\tilde{w}) \, \d \Pi_+(\tilde{w}) +\Delta(w)\bigg) + o(1). 
\end{split}
\end{equation}
\end{theorem}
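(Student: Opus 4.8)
The plan is to derive both assertions from Markov nonlinear renewal theory applied to the decomposition \eqref{Tn1}, in which the boundary $a=\log(A/\rho)$ is crossed by the sum of the Markov random walk $\widetilde{S}_n^\rho$ and the slowly changing perturbation $\eta_n$. Three facts drive everything. First, $\E^\Pi\widetilde{S}_1^\rho=\Kc+|\log(1-\rho)|>0$, so the walk has positive drift and $T_a\to\infty$ a.s.\ as $a\to\infty$, which makes the renewal-theoretic limits available. Second, $\eta_n\to\eta$ a.s.\ with $\E^{\Pi_+}\eta=C(\rho,\Kc)$, as recorded in \eqref{4.9} and \eqref{ConstantC}. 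Third, adding a slowly changing term to a Markov random walk leaves the limiting overshoot law $G(\cdot,\rho,\Kc)$ unchanged (Theorem~1 of Fuh~\cite{FuhAS04}); this is the mechanism that lets me compute the overshoot corrections from the pure walk $\widetilde{S}_n^\rho$.

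For part (i) I would start from the posterior representation \eqref{post_R}, which gives $\Ps(\nu>n\mid\Fc_n)=(1+R_{n,\omega})^{-1}$. Conditioning on $\Fc_{T_A}$ turns the false alarm probability into the exact identity $\PFA(T_A)=\Ps^\omega(T_A<\nu)=\E^\omega[(1+R_{T_A,\omega})^{-1}]$. Using $R_{n,\omega}=\rho R_n^\rho$ and the boundary identity $\log R_{T_a}^\rho=a+\chi_a$ from \eqref{Tn1}, so that $R_{T_A,\omega}=A\,e^{\chi_a}$, I obtain
\[
\PFA(T_A)=\E^\omega\!\left[\frac{1}{1+A\,e^{\chi_a}}\right]=\frac{1}{A}\,\E^\omega\!\left[e^{-\chi_a}\right](1+o(1)),
\]
where the ``$+1$'' in the denominator contributes only to $o(1)$ because $A\to\infty$ and $e^{-\chi_a}\le 1$. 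Since $\eta_n$ is slowly changing, the overshoot $\chi_a$ has the same limiting law $G(\cdot,\rho,\Kc)$ as the overshoot of $\widetilde{S}_n^\rho$, whence $\E^\omega[e^{-\chi_a}]\to\int_0^\infty e^{-y}\,\d G(y,\rho,\Kc)=\zeta(\rho,\Kc)$, which is exactly \eqref{HOPFA}.

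For part (ii) the starting point is the exact identity \eqref{4.7}, obtained by applying Wald's identity for products of Markovian random matrices (Theorem~2 of Fuh~\cite{FuhAS03}) to $\widetilde{S}_n^\rho$ at $T_a$, with the Poisson-equation correction $\Delta$ solving \eqref{4.8}. Solving \eqref{4.7} for $\E_1(T_a\mid W_0=w)$ reduces the proof to the two limits
\[
\E_1(\eta_{T_a}\mid W_0=w)\longrightarrow \E^{\Pi_+}\eta=C(\rho,\Kc),\qquad \E_1(\chi_a\mid W_0=w)\longrightarrow \frac{\E^{\Pi_+}(\widetilde{S}_{N_+}^\rho)^2}{2\,\E^{\Pi_+}\widetilde{S}_{N_+}^\rho}.
\]
The first follows from $\eta_{T_a}\to\eta$ a.s.\ (as $T_a\to\infty$), upgraded to convergence in mean by uniform integrability under $\E_1|S_1|^2<\infty$, together with the identification in \eqref{ConstantC}. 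The second is the mean of the limiting overshoot distribution stated just before the theorem, unaffected by the slowly changing $\eta_n$, whose ratio-of-ladder-moments form is supplied by Proposition~\ref{P2} in the appendix. Substituting $a=\log(A/\rho)$ and these limits into the solved form of \eqref{4.7} yields \eqref{4.10}.

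The hard part will be the nonlinear renewal step: verifying rigorously that $\eta_n$ is slowly changing in the Markov sense and that its addition preserves the overshoot law. Concretely one must check the hypotheses of Theorem~1 of Fuh~\cite{FuhAS04}, namely that $\max_{j\le \delta n}|\eta_{n+j}-\eta_n|\to 0$ in probability and that $\{\eta_n\}$ is tight, using the geometric weights $(1-\rho)^k$ and the exponential bound $\E_\infty(e^{a\chi(M_1)}\mid W_0=w)\le C$ from Proposition~\ref{prop1} to control $V_n$ and its limit. A secondary technical point is the uniform integrability of $\{\eta_{T_a}\}$ and $\{\chi_a\}$, needed to pass from convergence in distribution to convergence of expectations, where the second moment $\E_1|S_1|^2<\infty$ and the nonarithmeticity of $S_1$ (guaranteeing a nondegenerate, non-lattice $G$) enter. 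Finally, in part~(i) one must justify that as $A\to\infty$ the boundary crossing occurs in the post-change regime, so that the overshoot under $\Ps^\omega$ is indeed governed by the post-change walk $\widetilde{S}_n^\rho$; this is forced by $T_a\to\infty$ together with $\nu<\infty$ a.s.\ under the prior.
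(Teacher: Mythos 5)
Your proposal follows essentially the same route as the paper: part (i) via the posterior identity $\PFA(T_A)=\E^\omega[(1+R_{T_A,\omega})^{-1}]$ combined with $R_{T_A,\omega}=Ae^{\chi_a}$ and the limiting overshoot distribution (the paper additionally splits $\E^\omega[e^{-\chi_a}]$ over $\{T_A\ge\nu\}$ and $\{T_A<\nu\}$, exactly the point you flag at the end), and part (ii) via Wald's identity \eqref{4.7} plus the Markov nonlinear renewal limits for $\E_1\eta_{T_a}$ and $\E_1\chi_a$, i.e., Proposition~\ref{P3}. The only place where the paper's proof does work you leave implicit is the verification of the no-early-crossing condition $(\log A)\,\Ps_1\{T_A\le \varepsilon\log A/(\Kc+|\log(1-\rho)|)\}\to 0$ (condition \eqref{Cond3}), which is established via the argument of Lemma~\ref{Lem:A1} together with the Fuh--Zhang maximal inequality under $\E_1|S_1|^2<\infty$; this falls under your general statement that the hypotheses of the nonlinear renewal theorem must be checked.
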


\begin{remark} The constants ${\E}^{\Pi_+} (\widetilde{S}_{N_+}^\rho)^2/2 {\E}^{\Pi_+} 
\widetilde{S}_{N_+}^\rho$
and ${\E}^{\Pi_+} \eta=C(\rho,\Kc)$ are the subject of the Markov nonlinear renewal theory.
The constant $-\int_{\cal S} \Delta(\tilde{w}) d\Pi_+(\tilde{w}) + \Delta(w)$ is due to Markovian
dependence via Poisson equation \eqref{4.7}.
\end{remark}

\begin{proof}
(i) By \eqref{post_R}, $1-\Ps(\nu \le T_A|\Fc_{T_A}) = (1+R_{T_A,\omega})^{-1} = (1+\rho R_{T_A}^\rho)^{-1}$ and we have
\[
\PFA(T_A) = \E^\omega[1-\Ps(\nu \le T_A|\Fc_{T_A})] = \E^\omega\brcs{1+\rho A (R_{T_A}^\rho/A)} = \frac{1}{\rho A} \E^\omega \brcs{e^{-\chi_a}}(1+o(1))
\]
as $A \rightarrow \infty$, where $\chi_a = S_{T_a}^\rho + \eta_{T_a}-a$. Since $\chi_a  \ge 0$ and $\PFA(T_A)< 1/A$, it follows that 
\begin{align*}
\E^\omega \brcs{e^{-\chi_a}} &= \E^\omega \brc{e^{-\chi_a} |T_A < \nu} \PFA(T_A)
+\E^\omega \brc{e^{-\chi_a}|T_A \ge \nu}(1-\PFA(T_A)) 
\\
&= \E^\omega \brc{e^{-\chi_a} | T_A \ge \nu} +O(1/A)
\end{align*}
 as $A \rightarrow \infty$. Therefore, it suffices to evaluate the value of 
\[
\E^\omega \brc{e^{-\chi_a}|T_A \ge \nu} = \sum^\infty_{k=1}P\{\nu = k |T_A \ge k\} \E^\omega \brc{e^{-\chi_a} | T_A \ge k}.
\]
To this end, we recall that, by \eqref{Tn1} 
\[
T_a = \{ n \geq 1 : \widetilde{S}_n^\rho + \eta_{n} \ge a \},
\]
where $ \widetilde{S}_n^\rho = S_n +n| \log(1-\rho)|$ is a Markov random walk with the expectation $\Kc+ |\log(1-\rho)|$ and 
$\eta_n$, $n\ge 1$  are slowly changing under $\Ps_1$. Since, by conditions 
C1, $0< \Kc <\infty$, we can apply Theorem 1 of \cite{FuhAS04}, see also Proposition~\ref{P2} in the appendix, to obtain
\[
\lim_{A\rightarrow \infty}\E_1 \brc{e^{-\chi_a} | T_A \ge k} = \int _0^\infty e^{-y} \, \d G(y,\rho,{\cal K}) = \zeta(\rho,{\cal K}).
\] 
Also,
\[
\lim_{A\rightarrow \infty} \Ps\brc{ \nu = k| T_A \ge k} = \lim_{A \rightarrow \infty} \frac{\omega_k \Ps_\infty(T_A \ge k)}{\Ps^\omega (T_A \ge \nu)}= \omega_k,
\]
so that 
\[
\lim_{A\rightarrow \infty}\E_k \brc{e^{-\chi_a} | T_A \ge \nu} = \lim_{A \rightarrow \infty}\E_k e^{-\chi_a} = \zeta(\rho,{\cal K}),
\]
which completes the proof of (\ref{HOPFA}).

(ii)  The probability $\Ps_1$ and expectation $\E_1$ in the proof below are taken under $W_0=w$, 
i.e., $\Ps_1(\cdot|W_0=w)$ and $\E_1(\cdot|W_0=w)$. We omit conditioning on $W_0=w$ for brevity. 
The proof of \eqref{4.10} is based on the Markov nonlinear renewal theory (see Theorem 3 and Corollary 1 in
Fuh \cite{FuhAS04}). A simplified version suitable in our case is given in the appendix (see  Proposition~\ref{P3}). 

By \eqref{Tn1}, the stopping time $T_A=T_a$ is based on thresholding the sum of the Markov random walk
$S_n^\rho$ and the nonlinear term $\eta_n$. Note that
\[ 
\eta_n \longrightarrow_{\scriptstyle n\rightarrow \infty} \eta~~~\Ps_{1}\mbox{-a.s.} ~~~\mbox{and}~~~
\E_{1} \eta_n \longrightarrow_{\scriptstyle n\rightarrow \infty} \E_{1} \eta,
\]
and $\eta_n$, $n\ge 1$ are slowly changing under $\Ps_{1}$. In order to apply Proposition~\ref{P3} (see appendix) in our case, we have to check the following three 
conditions:
\begin{align}
& \sum_{n=1}^\infty \Ps_1 \{\eta_n \le -\varepsilon n\} < \infty ~~ \text{ for some}~ 0< \varepsilon < \Kc; \label{Cond1} \\
& \max_{0\le k\le n}|\eta_{n+k}|,~ n\ge 1,~ \text{are}~ \Ps_1\text{-uniformly integrable};  \label{Cond2}\\
& \lim_{A \rightarrow \infty} (\log A)~\Ps_1\set{T_A \le \frac{\varepsilon \log A}{\Kc+|\log(1-\rho)|}}=0~~ \text{for some} ~0<\varepsilon<1. \label{Cond3}
\end{align}

Condition \eqref{Cond1} obviously holds because $\eta_n  \ge 0$.  Condition \eqref{Cond2} holds because  $\eta_{2n}$, $n\ge 1$
are $\Ps_1$-uniformly integrable since $\eta_{2n}\le \eta$ and $\E_1 \eta < \infty$ and $\max_{0\le k\le n}|\eta_{n+k}|=\eta_{2n}$ ($\eta_n$, $n=1,2\cdots$ are nondecreasing). 

To verify condition \eqref{Cond3} we now prove that for all $A>0$ and $0<\varepsilon <1$
\begin{equation}\label{ProbTAle}
\Ps_1\brc{T_A \le N_{A,\varepsilon}} \le  \frac{1}{1-\rho} \exp\set{-\frac{\Kc~\varepsilon^2}{\Kc + |\log (1-\rho)} \, \log A} + \beta(A,\varepsilon) ,
\end{equation}
where 
\[
N_{A,\varepsilon} = \frac{(1-\varepsilon) \log A}{\Kc+\log(1-\rho)} \quad \text{and} \quad 
\beta(A,\varepsilon) =\Ps_1\brc{\frac{1}{N_{A,\varepsilon}} \max_{1 \le n \le N_{A,\varepsilon}}  S_{n} \ge (1+\varepsilon) \Kc}. 
\]
Moreover, we will establish that under the second moment condition $\E_1 |S_1|^2 <\infty$, the probability $\beta(A,\varepsilon)$ vanishes as $A\to\infty$ faster than $1/\log A$. This implies that
$\Ps_1\brc{T_A \le N_{A,\varepsilon}}=o(1/\log A)$ as $A\to\infty$, i.e., condition \eqref{Cond3}.

To obtain the inequality \eqref{ProbTAle} we follow the proof of Lemma~\ref{Lem:A1} in the appendix, replacing $|\log\alpha|$ by $\log A$, setting $k=1$ and $\delta_\alpha=0$,
 and noting that for the zero-modified geometric prior $\log \Ps(\nu > N_{A,\varepsilon}) = N_{A,\varepsilon} \log(1-\rho)$.  Then using \eqref{ProbTle}--\eqref{UpperApk}, we obtain that the 
 inequality \eqref{ProbTAle} holds for all $A>0$ and $0<\varepsilon <1$.  Thus, it remains to prove that $\lim_{A\to\infty} [\beta(A,\varepsilon) \log A]=0$. 
 By Proposition~\ref{prop1}, we can apply Theorem 6 of Fuh and Zhang \cite{FuhZhang00}, which yields that if $\E_1 |S_1|^2 < \infty$, then for all $\varepsilon>0$ 
\begin{equation*}
\sum_{n=1}^\infty \Ps_1 \left\{\max_{1\le k\le n}(S_k- k \Kc ) \ge \varepsilon n\right\} < \infty.
\end{equation*}
This implies that the summand is $o(1/n)$ for a large $n$. Since
 \[
 \begin{aligned}
 \beta(A,\varepsilon)  =\Ps_1\brc{\max_{1 \le n \le N_{A,\varepsilon}}  (S_{n} - N_{A,\varepsilon} \Kc) \ge \varepsilon \Kc N_{A,\varepsilon}}
  \le \Ps_1\brc{\max_{1 \le n \le N_{A,\varepsilon}}  (S_{n} - n \Kc) \ge \varepsilon \Kc N_{A,\varepsilon}}
 \end{aligned}
 \]
it follows that $\beta(A,\varepsilon)=o(1/\log A)$ as $A\to\infty$. This implies condition \eqref{Cond3}. 

Applying Proposition~\ref{P3}  (see Appendix) completes the proof. 
\end{proof}

\subsection{The Generalized Shiryaev--Roberts Procedure}\label{HOAGSR}

Since the GSR procedure $\wtT_B$ defined in \eqref{GSR} and \eqref{SRproc} is a limit of the Shiryaev procedure as the parameter of the geometric prior distribution $\rho$ goes to zero,
it is intuitively obvious that the higher order approximation to the conditional average detection delay $\E_1(\wtT_B|W_0=w)$ is given by \eqref{4.10} with $\rho=0$ and $A/\rho=B$. 
This is indeed the case as the following theorem shows. The proof of this theorem is essentially similar to the proof of Theorem~\ref{th3}(ii) and for this reason, it is omitted.

\begin{theorem}\label{th6} 
Let $\{Y_n, n \ge 0\}$ be a hidden Markov model. Assume conditions {\rm C1--C2} hold. 
Let the prior distribution of the change point be the zero-modified geometric distribution \eqref{Geom}. 
Assume that ${S}_1$ is nonarithmetic with respect to $\Ps_{\infty}$ and $\Ps_1$ and that $\E_1|S_1|^2 < \infty$. Then for $w \in {\cal X}$,
as $B \rightarrow \infty$
\begin{equation}
\begin{split}\label{HOAOGSR}
 \E_1 (\wtT_B |W_0=w) &= \frac{1}{\Kc} \bigg(\log B - {\E}^{\Pi_+}  \tilde{\eta} +  
 \frac{{\E}^{\Pi_+} (\widetilde{S}_{N_+}^\rho)^2}{2 {\E}^{\Pi_+} \widetilde{S}_{N_+}^\rho}  \\
& \quad  \quad  \quad - \int_{\cal S} \Delta(\tilde{w}) \, \d \Pi_+(\tilde{w}) +\Delta(w)\bigg) + o(1), 
\end{split}
\end{equation}
where $\tilde{\eta} = \log (1+ \ell + \sum_{j=1}^\infty e^{-S_j})$ and 
$\ell= \lim_{\rho \to \infty} w_0(\rho)/\rho$.
\end{theorem}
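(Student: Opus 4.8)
The plan is to mirror the proof of Theorem~\ref{th3}(ii) verbatim, taking the geometric drift $|\log(1-\rho)|$ to be zero, so that the analysis of $\wtT_B$ reduces to a plain Markov random walk crossing a fixed level, perturbed by a slowly changing nonlinear term, to which Markov nonlinear renewal theory (Proposition~\ref{P3}) applies. First I would record the key decomposition. Since $S_n^k = S_n - S_{k-1}$ with $S_n = \sum_{i=1}^n g(W_{i-1},W_i)$ and $S_0 = 0$, the GSR statistic factors as
\[
R_n^\ell = \ell\,e^{S_n^1} + \sum_{k=1}^n e^{S_n^k} = e^{S_n}\brc{1 + \ell + \sum_{j=1}^{n-1} e^{-S_j}},
\]
so that $\log R_n^\ell = S_n + \tilde\eta_n$ with $\tilde\eta_n = \log\brc{1 + \ell + \sum_{j=1}^{n-1} e^{-S_j}}$. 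Writing $b = \log B$, the GSR stopping time then takes exactly the form $\wtT_B = \inf\set{n \ge 1: S_n + \tilde\eta_n \ge b}$, which is the $\rho = 0$ counterpart of \eqref{Tn1}: here $\set{S_n}$ is a Markov random walk with stationary mean $\E^\Pi S_1 = \Kc$. Because $n^{-1}S_n \to \Kc > 0$ $\Ps_1$-a.s.\ by Lemma~\ref{lem1}(i), the series $\sum_{j\ge 1} e^{-S_j}$ converges $\Ps_1$-a.s., whence $\tilde\eta_n \to \tilde\eta = \log\brc{1 + \ell + \sum_{j=1}^\infty e^{-S_j}}$ a.s., matching the statement.

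Next I would verify the three hypotheses of Proposition~\ref{P3}, which are the analogs of \eqref{Cond1}--\eqref{Cond3}. Condition \eqref{Cond1} is immediate since $\tilde\eta_n \ge 0$. Condition \eqref{Cond2} holds because $\tilde\eta_n$ is nondecreasing and dominated by $\tilde\eta$ with $\E_1\tilde\eta < \infty$ (the finiteness being established exactly as for $C(\rho,\Kc)$ in \eqref{ConstantC}, now at $\rho = 0$), so $\max_{0\le k\le n}|\tilde\eta_{n+k}| = \tilde\eta_{2n}$ is $\Ps_1$-uniformly integrable. The substantive step is condition \eqref{Cond3}, namely $\lim_{B\to\infty}(\log B)\,\Ps_1\set{\wtT_B \le \varepsilon\log B/\Kc} = 0$. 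Here I would reuse the bound on $\tilde p_k(\alpha,\varepsilon)$ already obtained in the proof of Theorem~\ref{Th:SRAO}(i): specializing \eqref{ProbTleSR}--\eqref{pkle} to $k = 1$ and $B = B_\alpha$ gives $\Ps_1(\wtT_B \le N) \le (1 + \ell + N)/B^{\varepsilon^2} + \tilde\beta_1(\alpha,\varepsilon)$ with $N \asymp \log B$, so the first term is $o(1/\log B)$, while $(\log B)\,\tilde\beta_1(\alpha,\varepsilon) \to 0$ follows from the Fuh--Zhang maximal inequality (Theorem~6 of \cite{FuhZhang00}) under the second-moment hypothesis $\E_1|S_1|^2 < \infty$, precisely as in the passage following \eqref{ProbTAle}.

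With the three conditions in hand, I would then combine Wald's identity for products of Markov random matrices (Theorem~2 of \cite{FuhAS03}) with the Poisson equation \eqref{4.8} at $\rho = 0$ --- whose solvability together with $\sup_j\E^w|\Delta(W_j)|^r < \infty$ is supplied by Proposition~\ref{prop1} and Theorem~17.4.2 of \cite{MeynTweedie09}, exactly as in the proof of Lemma~\ref{lem1} --- to evaluate the expected delay. Proposition~\ref{P3} yields
\[
\E_1(\wtT_B\mid W_0=w) = \frac{1}{\Kc}\brc{b - \E^{\Pi_+}\tilde\eta + \frac{\E^{\Pi_+}(S_{N_+})^2}{2\,\E^{\Pi_+}S_{N_+}} - \int_{\cal X}\Delta(\tilde w)\,\d\Pi_+(\tilde w) + \Delta(w)} + o(1),
\]
where $N_+ = \inf\set{n\ge 1: S_n \ge 0}$ is the ascending ladder epoch of the plain walk $\set{S_n}$ (so that $\widetilde S^\rho_{N_+}$ in \eqref{HOAOGSR} is read at $\rho=0$ as $S_{N_+}$), and the overshoot moments are nondegenerate because $S_1$ is nonarithmetic. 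Substituting $b = \log B$ gives \eqref{HOAOGSR}.

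I expect the main obstacle to be condition \eqref{Cond3}: one must rule out an early crossing driven by the nonlinear term $\tilde\eta_n$, which now starts from the possibly nonzero level $\log(1+\ell)$, and it is exactly here that the second-moment assumption and the submartingale/change-of-measure estimate of Section~\ref{s:SR} are indispensable. I would also stress that the expansion cannot be obtained by merely letting $\rho\to0$ in \eqref{4.10}, since the $o(1)$ remainder there is not uniform in $\rho$; the argument must instead be carried out directly at $\rho = 0$, which is why every step above is re-derived for $\set{S_n}$ rather than inherited as a limit.
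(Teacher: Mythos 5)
Your proposal is correct and follows exactly the route the paper intends: the paper omits the proof of this theorem, stating only that it is ``essentially similar to the proof of Theorem~\ref{th3}(ii),'' and your argument is precisely that proof carried out at $\rho=0$ --- the factorization $\log R_n^\ell = S_n + \tilde\eta_n$, verification of the analogues of \eqref{Cond1}--\eqref{Cond3} (reusing the bound \eqref{ProbTleSR}--\eqref{pkle} and the Fuh--Zhang maximal inequality for the third), and an application of Proposition~\ref{P3} together with the Poisson equation. Your closing remark that the result cannot be obtained by simply letting $\rho\to 0$ in \eqref{4.10} is also consistent with the paper's framing, which presents the $\rho=0$ formula as requiring its own (parallel) derivation rather than a limiting argument.
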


\section{ Examples}\label{s:Examples}

Below we consider several examples that are of interest in certain applications (see, e.g., \cite{Willett, RGT2013,Raghavanetal-IEEE2014}). 
While our main concern in this paper is checking conditions under which the Shiryaev detection 
procedure is asymptotically optimal, another
important issue is the feasibility of its implementation, i.e., computing the Shiryaev decision statistic. We address both issues in each example.  

In many applications, including Examples 1 and 2 considered below, one is interested in a simplified HMM where the observations $Y_n$, $n=1,2,\dots$ 
are conditionally independent, conditioned on the Markov chain $X_n$, i.e.,
$f_j (Y_n | X_n, Y_{n-1}) = f_j (Y_n | X_n)$. In this particular case, the conditions C1 and C2 in the above theorems can be simplified to the following conditions.

{\bf C1$^{'}$}. For each $j=\infty,0$, the Markov chain $\{X_n, n \ge 0
\}$ defined in (\ref{ssm}) and (\ref{ssmden}) is ergodic 
(positive recurrent, irreducible and aperiodic) on a finite state space ${\cal X}=\{1,\cdots,d\}$ and has stationary probability
 $\pi$.

{\bf C$2^{'}$}. The Kullback--Leibler information number is positive and finite, $0< \Kc < \infty$. For each $j=\infty,0$, the random matrices $M_0^j$ and
$M_1^j$, defined in (\ref{rm1}) and (\ref{rm2}), are invertible $\Ps_j$ almost surely and for some $r > 0$,
\begin{equation}\label{C2}
 \left | \sum_{x_0,x_1 \in {\cal X}} \int_{y \in {\bf R}^d} \pi_{j}(x_0)  p_j(x_0,x_1) y^{r+1} f_j(y|x_1)Q(dy) \right  |  < \infty.
\end{equation}

\subsection{Example 1: Target Track Management Application}\label{ss:Ex1}

We begin with an example which is motivated by certain multisensor target track management applications \cite{Willett} that are discussed later on at the end of this subsection.

Let $X_n \in \{1,2\}$ be a two-state (hidden, unobserved) Markov chain with transition probabilities 
$\Ps_j(X_n =1|X_{n-1}=2)=p_j(2,1)=p$ and $\Ps_j(X_n =2|X_{n-1}=1)=p_j(1,2)=q$, $n \ge 1$ and initial stationary distribution 
$\Ps_j(X_0=1)=\pi_j(1)=p/(p+q)$ for both $j=\infty$ and $j=0$. 
Under the pre-change hypothesis $H_\infty:\nu=\infty$,  the conditional density of the observation $Y_n$ is
\[
p(Y_n|\Yb^{n-1}_0, X_n=l, H_\infty) = g_l(Y_n) \quad \text{for}~l=1, 2,
\]
and under the hypothesis $H_k: \nu=k$, the observations $Y_k, Y_{k+1}, \dots$ are i.i.d.\ with density $f(y)$.
The pre-change joint density of the vector $\Yb_0^n$ is
\[
p_\infty(\Yb^n_0) = \prod_{i=1}^n p_\infty(Y_i|\Yb^{i-1}_0),
\]
where 
\begin{equation} \label{f0cond}
p_\infty(Y_i|\Yb_0^{i-1})  = \sum_{l=1}^2 g_l(Y_i) \Ps(X_i=l | \Yb^{i-1}_0), \quad i \ge 1
\end{equation}
and the posterior probability $\Ps(X_i=l |\Yb^{i-1}_0)=P_{i|i-1}(l)$ is obtained by a Bayesian update as
follows. By the Bayes rule, the posterior probability $\Ps(X_i =l |\Yb^{i}_0):=P_i(l)$ is given by
\begin{equation} \label{posterior}
P_i(l)= \frac{g_l(Y_i) P_{i|i-1}(l)}{\sum_{s=1}^2 g_s(Y_i)P_{i|i-1}(s)}.
\end{equation}
The probability $P_{i|i-1}(X_i)$ is used as the prior probability for the update (prediction term) and can be computed as 
\begin{equation} \label{prediction}
P_{i|i-1}(2) = P_{i-1}(2) (1-p) + P_{i-1}(1) q, \quad P_{i|i-1}(1) = P_{i-1}(1) (1-q) + P_{i-1}(2)p.
\end{equation}

The statistic $R_{n,\omega}$ defined in \eqref{Rnp} can be computed recursively as
\begin{equation}\label{Shirstatrec}
R_{n,\omega} = (R_{n-1,\omega} + \omega_{n,n}) \Lambda_n, \quad n \ge 1, ~~ R_{0,\omega} = \omega_0/(1-\omega_0),
\end{equation}
where the likelihood ratio ``increment'' $\Lambda_n = f(Y_n)/p_\infty(Y_n|\Yb^{n-1}_0)$
can be effectively computed using \eqref{f0cond}, \eqref{posterior}, and \eqref{prediction}. 
Here $\omega_{k,n}$ is defined in (\ref{wkn}). Therefore, in this example, the computational cost of the Shiryaev rule is small, and it can be easily implemented on-line.

Condition C1$^{'}$ obviously holds. Condition \eqref{C2} in C2$^{'}$ holds if 
\begin{align*}
\int_{-\infty}^\infty |y|^{r+1} f(y) \, \d Q(y) <\infty \quad \text{and}\quad \int_{-\infty}^\infty |y|^{r+1} g_l(y) \, \d Q(y) < \infty ~~ \text{for} ~ l=1,2
\end{align*}
since
\begin{align*}
 \abs{\sum_{i,l=1,2}  \int_{-\infty}^{\infty}  \pi_0(i)  p_0(i,l) y^{r+1}  f(y) \, \d Q(y)} \le  \int_{-\infty}^{\infty} |y|^{r+1}  f(y) \, \d Q(y)
\end{align*}
and
\begin{align*}
  \abs{\sum_{i, l=1,2}  \int_{-\infty}^{\infty}  \pi_\infty(i)  p_\infty(i,l) y^{r+1}  g_{l}(y) \, \d Q(y)} 
 \le \sum_{i,l=1,2}  \pi_\infty(i)  p_\infty(i,l)  \int_{-\infty}^{\infty}  |y|^{r+1}  g_l(y) \, \d Q(y) .
 \end{align*}
In this case, the Kullback--Leibler number $\Kc$ is obviously finite. Therefore, the Shiryaev detection rule is nearly optimal, minimizing asymptotically moments of the detection delay up to order $r$. In particular, if $f(y)$ and $g_l(y)$ are Gaussian densities, then the Shiryaev procedure minimizes  all positive moments of the delay to detection.

In \cite{Willett}, this problem was considered in the context of target track management, specifically for termination of tracks from targets with drastically fluctuating signal-to-noise ratios in active sonar systems. 
This drastic fluctuation was proposed to model as Markovian switches between low and high intensity signals, which lead to low and high probabilities of detection. In this scenario, 
one is particularly interested in the Bernoulli model where $Y_n=0,1$ and
\[
g_l(Y_n) = (P_d^l)^{Y_n}(1-P_d^l)^{1-Y_n}, ~~ l=1, 2; \quad f(Y_n)=(P_{fa})^{Y_n}(1-P_{fa})^{1-Y_n} ,
\]
where $P_d^1=P_d^{\rm High}$ and $P_d^2=P_d^{\rm Low}$ are local probabilities of detection (in single scans) for high and low intensity signals, respectively, 
and $P_{fa}$ is the probability of a false alarm that satisfy inequalities $P_d^1 >P_d^2> P_{fa}$. The target track is terminated at the first time the statistic $R_{n,\omega}$ exceeds threshold $A$.
Note that in this application area, the results of simulations presented in \cite{Willett} show that the Shiryaev procedure  performs very well while popular Page's CUSUM procedure performs poorly. 
Also, since for the Bernoulli model all moments are finite, the Shiryaev procedure minimizes asymptotically all positive moments of the detection delay. 

\subsection{Example 2: A two-state HMM with i.i.d.\ observations} 

Consider a binary-state case with i.i.d.\ observations in each state. Specifically, let $\theta$ be a parameter taking two possible values $\theta_0$ and $\theta_1$ and  let $X_n\in\{1,2\}$ be a two-state ergodic 
Markov chain with the transition matrix   
\begin{eqnarray*}
[p_\theta(i,l)] = \left[
\begin{array}{cc}
1 - p_{\theta} &  p_{\theta} \\
q_{\theta} & 1 - q_{\theta}
\end{array}
\right] 
\end{eqnarray*}
and stationary initial distribution ${\sf P}_{\theta}(X_0 = 2)  = 1-{\sf P}_{\theta}(X_0 = 1)= \pi_\theta(2)=q_{\theta}/(p_{\theta} + q_{\theta})$ for some $\{p_{\theta} , q_{\theta} \} \in [0 , 1]$. 
Further, assume that conditioned on $X_n$ 
the observations $Y_n$  are i.i.d.\ with densities $f_\theta(y| X_n=l) =f_\theta^{(l)}(y)$ ($l=1,2$), where the parameter $\theta=\theta_0$ pre-change and $\theta=\theta_1$ post-change.  
In other words, in this scenario, the conditional density
\begin{equation*}
p( Y_n |\Yb^{n-1}_0) = \begin{cases}
p_{\theta_0} (Y_n|\Yb^{n-1}_0) & \text{if} ~ n \le \nu - 1 \\
p_{\theta_1} (Y_n|\Yb^{n-1}_0) & \text{if} ~ n \ge \nu 
\end{cases} ,
\end{equation*}
which in terms of the joint density of $\Yb^n_0$ yields
\begin{equation*}
p(\Yb^n_0) = \begin{cases}
p_{\theta_0} (\Yb^n_0) & {\rm if} ~ n \le \nu - 1 \\
p_{\theta_1}(\Yb^n_0) \cdot \frac{p_{\theta_0}(\Yb^{\nu-1}_0)}{p_{\theta_1}(\Yb^{\nu-1}_0) }  & {\rm if} ~ n \ge \nu 
\end{cases} .
\end{equation*}
Thus,  the increment of the likelihood ratio $\Lambda_n = LR_n/LR_{n-1}$ does not depend on the change point and, as a result, the Shiryaev detection statistic obeys the recursion 
\eqref{Shirstatrec}, so that in order to implement the Shiryaev procedure it suffices to develop an efficient computational scheme
for the likelihood ratio $LR_n=p_{\theta_1}(\Yb^n_0)/p_{\theta_0}(\Yb^n_0)$. To obtain a recursion for $LR_n$, define the probabilities $P_{\theta, n}  :=  {\sf P}_{\theta} ( \Yb^n_0, X_n = 2)$ and 
$\widetilde{P}_{\theta, n}  :=  {\sf P}_{\theta} ( \Yb^n_0, X_n = 1)$. Straightforward argument shows that for $n \ge 1$
\begin{align}
P_{\theta, n} & =   \left[ P_{\theta, n-1} \, p_\theta(2,2) + \widetilde{P}_{\theta, n-1} \, p_\theta(1,2) \right] \, f_{\theta}^{(2)}(Y_n) ; \label{Rec1}
\\
\widetilde{P}_{\theta, n}  & =  \left[ P_{\theta, n-1} \, p_\theta(2,1) + \widetilde{P}_{\theta, n-1} \, p_\theta(1,1) \right] \, f_{\theta}^{(1)}(Y_n) \label{Rec2}
\end{align}
with $P_{\theta, 0}  = \pi_\theta(2)$ and $\widetilde{P}_{\theta,0} =\pi_\theta(1) =1-\pi_\theta(2)$. Since $p_{\theta} (\Yb^n_0)=P_{\theta, n}+\widetilde{P}_{\theta,n}$ we obtain that
\[
LR_n = \frac{ P_{\theta_1, n} + \widetilde{P}_{\theta_1, n} } {P_{\theta_0, n} + \widetilde{P}_{\theta_0, n} }. 
\]
Therefore, to implement the Shiryaev (or the SR) procedure we have to update and store $P_{\theta, n}$ and $\widetilde{P}_{\theta, n}$ for the two parameters values $\theta_0$ and $\theta_1$ using simple 
recursions \eqref{Rec1} and \eqref{Rec2}.

Condition C1$^{'}$ obviously holds. Assume that the observations are Gaussian with unit variance and different mean values in 
pre- and post-change regimes as well as for different states, i.e.,
$f_\theta^{(l)}(y) = \varphi(y-\mu_\theta^{(l)})$ ($\theta=\theta_0, \theta_1$, $l=1,2$), where $\varphi(y) =(2\pi)^{-1/2} \exp\set{-y/2}$ 
is density of the standard normal distribution. It is easily verified 
that the Kullback--Leibler number is finite. Condition \eqref{C2} in C2$^{'}$ has the form 
\[
 \abs{\sum_{i,l=1,2} \pi_\theta(i)  p_\theta(i,l) \int_{-\infty}^{\infty}  y^{r+1}  \varphi(y-\mu_\theta^{(l)}) \, \d y}  < \infty \quad \text{for}~\theta=\theta_0, \theta_1 ,
\]
which holds for all $r\ge 1$ due to the finiteness of all absolute positive moments of the normal distribution and the fact that
\[
\abs{\sum_{i,l=1,2} \pi_\theta(i)  p_\theta(i,l) \int_{-\infty}^{\infty}  y^{r+1}  \varphi(y-\mu_\theta^{(l)}) \, \d y} \le 
\sum_{i,l=1,2} \pi_\theta(i)  p_\theta(i,l) \int_{-\infty}^{\infty}  |y|^{r+1}  \varphi(y-\mu_\theta^{(l)}) \, \d y.
\]

Therefore, the Shiryaev detection rule is nearly optimal, minimizing asymptotically all positive moments of the detection delay.

\subsection{Example 3: Change of correlation in autoregression}

Consider the change of the correlation coefficient in the first-order autoregressive (AR) model
\begin{equation}\label{AR1}
Y_{n} = (a_{0}\Ind{n < \nu}+a(X_n)\Ind{n \ge \nu}) \,Y_{n-1}+\xi_{n}, \quad n \ge 1,
\end{equation}
where $X_{n}\in\{1,\dots,d\}$ is a $d$-state unobservable ergodic Markov chain and, conditioned on $X_n=l$, $a(X_n=l)= a_l$, $l=1,\dots,d$. The noise sequence $\{\xi_n\}$ is the i.i.d.\ standard Gaussian sequence,
$\xi_n\sim \Nc(0,1)$. Thus, the problem is to detect a change in the correlation coefficient of the Gaussian first-order AR process from the known value $a_0$ to a random value $a(X_n)\in\{a_1,\dots,a_d\}$ with 
possible switches between the given levels $a_1,\dots,a_d$ for $n\ge \nu$.

We assume that the transition matrix $[p(i,l)]$ is positive definite, i.e., $\det [p(i,l)]>0$ (evidently, it does not depend on $j=0,\infty$)  and that $|a_i|<1$ for $i=0,1,\dots,d$. 
The likelihood ratio for $Y_n$ given $\Yb^{n-1}_0$ and $X_n=l$  between the hypotheses $\Hyp_k$ and $\Hyp_\infty$ is
\[
\frac{p(Y_n|\Yb^{n-1}_0, X_n=l, \Hyp_k)}{p(Y_n| \Yb^{n-1}_0, \Hyp_\infty)} = \exp\set{\frac{1}{2} \brcs{(Y_n -a_0 Y_{n-1})^2- (Y_n -a_l Y_{n-1})^2}}, \quad n \ge k ,
\]
so that the likelihood ratio $\Lambda_n= p_0(Y_n|\Yb^{n-1}_0)/p_\infty(Y_n|\Yb^{n-1}_0)$ can be computed as
\[
\Lambda_n = \sum_{l=1}^d \exp\set{\frac{1}{2} \brcs{(Y_n -a_0 Y_{n-1})^2- (Y_n -a_l Y_{n-1})^2}} \Ps(X_n=l|\Yb^{n-1}_0),
\]
where using the Bayes rule, we obtain 
\[
\begin{aligned}
\Ps(X_n=l|\Yb^{n-1}_0) &= \sum_{i=1}^d p(i,l) \Ps(X_{n-1}=i |\Yb^{n-1}_0), 
\\
\Ps(X_n=l|\Yb^n_0) &= \frac{\Ps(X_n=l|\Yb^{n-1}_0) \exp\set{-\frac{1}{2} (Y_n -a_l Y_{n-1})^2}}{\sum_{i=1}^d \Ps(X_n=i|\Yb^{n-1}_0) \exp\set{-\frac{1}{2} (Y_n -a_i Y_{n-1})^2}}.
\end{aligned}
\]

The Markov chain $(X_n,Y_n)$ is $V$-uniformly ergodic with the Lyapunov function $V(y) = q (1+y^2)$, where $q\ \ge 1$, so condition C1 is satisfied. 
The condition C2 also holds. Indeed, if the change occurs from $a_0$ to the $i$-th component with probability 1, i.e., $\Ps(X_n=i)=1$ for $n \ge\nu$, then the Kullback--Leibler information number is equal to
\[
\begin{aligned}
&\int_{-\infty}^\infty \brcs{\int_{-\infty}^\infty [(y -a_0 x)^2- (y -a_i x)^2] \frac{1}{\sqrt{2\pi}} \exp\set{- \frac{1}{2} (y-x a_i)^2}d y} \sqrt{\frac{1-a_i^2}{2\pi}} 
\exp\set{- \frac{1-a_i^2}{2} x^2} \, \d x 
\\
&= \frac{(a_i-a_0)^2}{2(1-a_i^2)} .
\end{aligned}
\]
Hence, 
\[
0< \Kc <  \frac{(\max_{1 \le i \le d} a_i-a_0)^2}{2(1-\max_{1 \le i \le d} a_i^2)} < \infty.
\]
The condition \eqref{C22} in C2 holds for all $r \ge 1$ with $V(y) = q (1+y^2)$ since all moments of the Gaussian distribution are finite.


\appendix

\section{Auxiliary Results}\label{s:Proofs}

\subsection{A useful lemma}\label{ss:AppA1}
By $\lfloor x \rfloor$ we denote, as usual, the largest integer that is less than or equal to $x$. 

\begin{lemma}\label{Lem:A1}
For $0<\varepsilon <1$ and a small positive $\delta$, let $N_{\alpha,\varepsilon}= (1-\varepsilon)|\log\alpha|/(\Kc+c_\alpha+\delta)$. 
Let  the prior distribution $\{\omega_k^\alpha\}_{k\ge 0}$ satisfy condition \eqref{expon} with 
$c=c_\alpha\to 0$ as $\alpha\to 0$. Assume that the following condition holds
\begin{equation}\label{Pmaxgen}
\lim_{M\to\infty} \Ps_k\brc{\frac{1}{M} \max_{1 \le n \le M}S_{k+n-1}^k \ge (1+\varepsilon) \Kc} = 0 \quad \text{for all}~ \varepsilon >0~ \text{and all}~ k \ge 1.
\end{equation}  
Then
\begin{equation}\label{limprobA1}
\lim_{\alpha\to0} \sup_{T\in\classA} \Ps^\omega(0 \le T-\nu < N_{\alpha,\varepsilon}) =0 .
\end{equation}
\end{lemma}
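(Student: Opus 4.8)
The plan is to reduce the weighted probability to a family of one–sided deviation estimates indexed by the change point and then control them uniformly over $\classA$ by a change of measure from $\Ps_k$ to $\Ps_\infty$. Writing $N=\lfloor N_{\alpha,\varepsilon}\rfloor$ and using $\Ps^\omega=\sum_{k\ge0}\omega_k\Ps_k$, I would begin from
\[
\Ps^\omega(0\le T-\nu<N_{\alpha,\varepsilon})=\sum_{k=0}^\infty \omega_k\,\Ps_k(k\le T<k+N_{\alpha,\varepsilon}),
\]
so that it suffices to bound each term $\Ps_k(k\le T<k+N_{\alpha,\varepsilon})$ and then sum against the prior. The bound should hold for every $T\in\classA$ with constants independent of $T$, so at no point may I use the specific structure of a detection statistic; only the constraint $\PFA(T)\le\alpha$ and the law of the log-likelihood ratio are available.

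For each fixed $k$ I would change measure on the event $\{T=n\}$, $n\ge k$. Since $\Ps_k$ and $\Ps_\infty$ coincide on $\Fc_{k-1}$ and their Radon--Nikodym derivative on $\Fc_n$ equals $e^{S_n^k}$, one has $\Ps_k(T=n)=\E_\infty[\Ind{T=n}\,e^{S_n^k}]$. Splitting the window according to whether the log-likelihood-ratio path stays below $(1+\varepsilon)\Kc N_{\alpha,\varepsilon}$ gives the per-$k$ inequality
\[
\Ps_k(k\le T<k+N_{\alpha,\varepsilon})\le \beta_k(\alpha,\varepsilon)+e^{(1+\varepsilon)\Kc N_{\alpha,\varepsilon}}\,\Ps_\infty(k\le T<k+N_{\alpha,\varepsilon}),
\]
where $\beta_k(\alpha,\varepsilon)=\Ps_k\bl(\max_{1\le n\le N_{\alpha,\varepsilon}}S_{k+n-1}^k\ge(1+\varepsilon)\Kc N_{\alpha,\varepsilon}\br)$ is precisely the quantity in hypothesis \eqref{Pmaxgen}, and on the complementary ``good'' event the factor $e^{S_n^k}$ is at most $e^{(1+\varepsilon)\Kc N_{\alpha,\varepsilon}}$.

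The deviation part is dispatched by the hypothesis: because the induced chain $\{W_n\}$ is initialized from its stationary law $(\pi,f)$, the law of $(S_{k+n-1}^k)_{n\ge1}$ under $\Ps_k$ does not depend on $k$, hence $\beta_k(\alpha,\varepsilon)=\beta(\alpha,\varepsilon)$ and $\sum_k\omega_k\beta_k=\beta(\alpha,\varepsilon)\to0$ by \eqref{Pmaxgen}, since $N_{\alpha,\varepsilon}\to\infty$. For the main part I would use the false-alarm constraint in the form $\Ps_\infty(T\le m)\le\alpha/\Ps(\nu>m)$, which follows from \eqref{PFAdeforig} via $\alpha\ge\PFA(T)\ge\Ps_\infty(T\le m)\,\Ps(\nu>m)$, together with the prior tail \eqref{expon}, $\Ps(\nu>m)=\exp\{-(c_\alpha+o(1))m\}$. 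Summing the main term against $\omega_k$ produces $e^{(1+\varepsilon)\Kc N_{\alpha,\varepsilon}}\sum_k\omega_k\Ps_\infty(k\le T<k+N_{\alpha,\varepsilon})$, and after balancing the three competing factors (the exponential $e^{(1+\varepsilon)\Kc N_{\alpha,\varepsilon}}$, the false-alarm gain $\alpha$, and the prior-tail ratio $\Ps(\nu>k)/\Ps(\nu>k+N_{\alpha,\varepsilon})\approx e^{c_\alpha N_{\alpha,\varepsilon}}$) the net quantity is a slowly varying factor times
\[
\alpha^{\,1-\frac{(1-\varepsilon^2)\Kc+(1-\varepsilon)c_\alpha}{\Kc+c_\alpha+\delta}}=\alpha^{\,\frac{\varepsilon^2\Kc+\varepsilon c_\alpha+\delta}{\Kc+c_\alpha+\delta}},
\]
whose exponent is strictly positive, so it tends to $0$.

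The main obstacle is exactly this last balancing, carried out uniformly over $\classA$. Using the false-alarm factor $\alpha$ alone, or the prior-window mass $\Ps(\nu\in(k,k+N_{\alpha,\varepsilon}])$ alone, is insufficient once $c_\alpha\to0$; both must be exploited simultaneously. Concretely, I would split the sum over $k$ at the level $m_\alpha$ where $\Ps(\nu>m_\alpha)\approx\alpha$, invoking the false-alarm bound for $k\le m_\alpha$ and the prior tail for $k>m_\alpha$, and then check that the resulting exponent of $\alpha$ stays bounded away from $0$ as $c_\alpha\to0$: the fixed positive $\delta$ in the denominator of $N_{\alpha,\varepsilon}$ and the gap $(1+\varepsilon)(1-\varepsilon)=1-\varepsilon^2<1$ are precisely what guarantee this margin. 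The only technical nuisance is that \eqref{expon} controls $\Ps(\nu>m)$ only up to a multiplicative $e^{o(1)m}$, so the tail ratios and the hazard weights $\omega_k/\Ps(\nu>k+N_{\alpha,\varepsilon})$ must be estimated with care (they are exactly $\rho\,e^{c_\alpha N_{\alpha,\varepsilon}}$ for the geometric prior of the examples); I would carry the $o(1)$ through the split and absorb it into an arbitrarily small enlargement of $\delta$, letting $\varepsilon\downarrow0$ at the end.
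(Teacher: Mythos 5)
Your skeleton is the same as the paper's: the decomposition $\Ps^\omega=\sum_k\omega_k\Ps_k$, the per-$k$ inequality
$\Ps_k(k\le T<k+N_{\alpha,\varepsilon})\le p_k(\alpha,\varepsilon)+\beta_k(\alpha,\varepsilon)$ obtained by changing measure on the good event and invoking $\Ps_\infty(T<k+N_{\alpha,\varepsilon})\le\alpha/\Ps(\nu\ge k+N_{\alpha,\varepsilon})$, and the use of \eqref{expon} to turn the tail into $e^{(c_\alpha+o(1))(k+N_{\alpha,\varepsilon})}$ are exactly the content of inequality (3.11) of Tartakovsky--Veeravalli, which the paper's proof imports wholesale; your per-$k$ exponent $\varepsilon^2\Kc+\varepsilon c_\alpha+\delta$ over $\Kc+c_\alpha+\delta$ agrees with the paper's \eqref{UpperApk}. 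So the route is not different; the only question is the final summation over $k$, and that is where your sketch has a step that fails as written.

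The problem is your choice of split point $m_\alpha$ with $\Ps(\nu>m_\alpha)\approx\alpha$, i.e.\ $m_\alpha\asymp|\log\alpha|/c_\alpha$. The bound on $p_k$ carries a factor that grows in $k$: after absorbing the $o(1)$'s it reads
\begin{equation*}
\omega_k\,p_k(\alpha,\varepsilon)\;\lesssim\;\exp\Bigl\{-\tfrac{\varepsilon^2\Kc}{\Kc+c_\alpha+\delta_\alpha}|\log\alpha|+(c_\alpha+\delta_\alpha)k\Bigr\},
\end{equation*}
and at $k\approx m_\alpha$ the term $(c_\alpha+o(1))k$ is of order $|\log\alpha|$, which wipes out the $\varepsilon^2$-margin entirely (equivalently: $\alpha/\Ps(\nu\ge k+N_{\alpha,\varepsilon})$ is of order $1$ there, so the false-alarm gain is gone, and the remaining prior mass above the lower edge of that range is only $\alpha^{\theta}$ for some $\theta<1$ that does not beat $\alpha^{-(1-\varepsilon^2)\Kc/(\Kc+c_\alpha+\delta)}$ unless $\varepsilon$ is close to $1$). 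Your fallback, summing the constant hazard $\rho e^{c_\alpha N_{\alpha,\varepsilon}}$ over $m_\alpha$ terms, works for the geometric prior, where the ratio $\omega_k/\Ps(\nu\ge k+N_{\alpha,\varepsilon})$ is exactly $k$-free and $\rho m_\alpha\asymp|\log\alpha|$; but under \eqref{expon} alone the ratio is only $e^{c_\alpha N_{\alpha,\varepsilon}+o(k+N_{\alpha,\varepsilon})}$, and the $o(1)\cdot k$ slop cannot be ``absorbed into $\delta$'' (which multiplies only $N_{\alpha,\varepsilon}$) once $k$ is of order $|\log\alpha|/c_\alpha$. The fix is the paper's: truncate much earlier, at
$K_{\alpha,\varepsilon}\asymp\varepsilon^3\Kc|\log\alpha|/\bigl((\Kc+c_\alpha+\delta_\alpha)(c_\alpha+\delta_\alpha)\bigr)$, so that $(c_\alpha+\delta_\alpha)k\le\varepsilon^3\,\mathrm{const}\,|\log\alpha|$ is strictly dominated by the $\varepsilon^2|\log\alpha|$ gain uniformly over $k\le K_{\alpha,\varepsilon}$, while the discarded mass $\Ps(\nu>K_{\alpha,\varepsilon})=\exp\{-O(|\log\alpha|)\}$ still vanishes. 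With that single change of truncation level your argument closes; without it, the block $m_\alpha/2\le k\le m_\alpha$ is not controlled.
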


\begin{proof}
Using an argument similar to that in the proof of Theorem~1 in \cite{TartakovskyVeerTVP05} that has lead to the inequality (3.11) in \cite{TartakovskyVeerTVP05}, 
we obtain that for any $T\in\classA$ 
\begin{equation}\label{ProbTle}
\Ps_k\brc{0 \le T-k < N_{\alpha,\varepsilon}} \le  p_k(\alpha,\varepsilon)  + \beta_k(\alpha,\varepsilon) ,
\end{equation}
where
\[
\beta_k(\alpha,\varepsilon) =\Ps_k\brc{\frac{1}{N_{\alpha,\varepsilon}} \max_{1 \le n \le N_{\alpha,\varepsilon}}S_{k+n-1}^k \ge (1+\varepsilon) \Kc}
\]
and 
\begin{equation}\label{Apkin}
p_k(\alpha,\varepsilon) \le  \exp\set{(1+\varepsilon) \Kc N_{\alpha,\varepsilon}-|\log\alpha| -\frac{\log \Ps(\nu \ge k+N_{\alpha,\varepsilon})}{ k+N_{\alpha,\varepsilon}}(k+N_{\alpha,\varepsilon})}. 
\end{equation}
Condition \eqref{expon} implies that for all sufficiently large $N_{\alpha,\varepsilon}$ (small $\alpha$), there is an arbitrary small $\delta=\delta_\alpha$ such that
\[
- \frac{\log \Ps(\nu \ge k+N_{\alpha,\varepsilon})}{k+N_{\alpha,\varepsilon}} \le c_\alpha + \delta_\alpha,
\]
where $c_\alpha, \delta_\alpha\to0$ as $\alpha\to0$. Using this inequality after simple algebra we obtain
\[
(1+\varepsilon) \Kc N_{\alpha,\varepsilon}-|\log\alpha| - \frac{\log \Ps(\nu \ge k+N_{\alpha,\varepsilon})}{k+N_{\alpha,\varepsilon}}(k+N_{\alpha,\varepsilon}) 
\le -\frac{\Kc}{\Kc+c_\alpha+\delta_\alpha} \varepsilon^2 |\log \alpha| + k(d_\alpha+\delta_\alpha)
\]
Hence, for all small $\alpha$, the following inequality holds:
\begin{equation}\label{UpperApk}
p_k(\alpha,\varepsilon) \le \exp\set{-\frac{\Kc}{\Kc+c_\alpha+\delta_\alpha}\varepsilon^2 |\log\alpha| + (c_\alpha+\delta_\alpha) k},
\end{equation}
where the right-hand side approaches zero as $\alpha\to0$ for 
\[
k\le K_{\alpha,\varepsilon}= \left \lfloor \frac{\Kc \,\varepsilon^3 |\log\alpha|}{(\Kc+c_\alpha+\delta_\alpha)(c_\alpha+\delta_\alpha)}\right \rfloor.
\]
Thus,
\[
\begin{aligned}
 \sup_{T\in \classA} & \Ps^\omega(0 < T-\nu < N_{\alpha,\varepsilon})  =\sum_{k=0}^\infty \omega_k^\alpha \sup_{T\in\classA} \Ps_k\brc{0 \le T-k < N_{\alpha,\varepsilon}}
 \\
 & \le  \Ps(\nu > K_{\alpha,\varepsilon}) + \sum_{k=0}^{K_{\alpha,\varepsilon}} \omega_k^\alpha \sup_{T\in\classA} \Ps_k\brc{0 \le T-k < N_{\alpha,\varepsilon}}
 \\
 &  \le \Ps(\nu > K_{\alpha,\varepsilon}) +  \sum_{k=0}^{K_{\alpha,\varepsilon}}  \omega_k^\alpha  \beta_k(\alpha,\varepsilon) 
 + \exp\set{-\frac{\Kc}{\Kc+c_\alpha+\delta_\alpha}\varepsilon^2 |\log\alpha| + (d_\alpha+\delta_\alpha) K_{\alpha,\varepsilon}} 
 \\
& \le  \Ps(\nu > K_{\alpha,\varepsilon}) +  \sum_{k=0}^{K_{\alpha,\varepsilon}}  \omega_k^\alpha  \beta_k(\alpha,\varepsilon) 
 + \exp\set{-\frac{\Kc}{\Kc+c_\alpha+\delta_\alpha}\varepsilon^2 (1-\varepsilon) |\log\alpha|} .
 \end{aligned}
 \]
By condition \eqref{expon}, 
\[
-\log \Ps(\nu > K_{\alpha,\varepsilon}) =O(|\log\alpha|) \to \infty \quad \text{as}~ \alpha\to0,
\]
so $\Ps(\nu > K_{\alpha,\varepsilon})\to0$ as $\alpha\to0$. By condition \eqref{Pmaxgen}, the second term goes to zero. The last term also vanishes as $\alpha\to0$
for all $0<\varepsilon<1$. Therefore, all three terms go to zero for all $0<\varepsilon <1$ and \eqref{limprobA1} follows.  
\end{proof}

\subsection{A Markov nonlinear renewal theory}\label{ss:AppA2}

In this subsection, we give a brief summary of the Markov nonlinear renewal theory developed in 
Fuh \cite{FuhAS04}. 
For the application of these results in this paper, we provide a simpler 
version which is more transparent. Note that here there is no description for the change point, we
use typical notations in Markov chains.

Abusing the notation a little bit we let $\{X_n, n\ge 0\}$ be a Markov chain 
on a general state space ${\cal X}$ with $\sigma$-algebra $\cal A$, which is irreducible
with respect to a maximal irreducibility measure on $({\cal X},\cal A)$
and is aperiodic. Let $S_n = \sum_{k=1}^n Y_k$ be the additive component,
taking values on the real line ${\bf R}$, such that
$\{(X_n,S_n), n\ge 0\}$ is a Markov chain on ${\cal X} \times {\bf R}$
with transition probability
\begin{equation}\label{8.4}
\begin{split}
&  \Ps\{(X_{n+1},S_{n+1}) \in A \times (B+s) | (X_n,S_n) = (x,s)\} \\
&= \Ps \{(X_1,S_1) \in A \times B | (X_0,S_0) = (x,0)\}
= \Ps(x,A \times B), 
\end{split}
\end{equation}
for all $x \in {\cal X},~ A \in {\cal A}$ and $B \in {\cal B}({\bf R})$ (Borel $\sigma$-algebra on 
${\bf R}$). The chain $\{(X_n,S_n), n \ge 0 \}$ is called a {\it Markov random walk}.
In this subsection, let $\Ps_\mu~(\E_{\mu})$ denote the probability (expectation) under
the initial distribution of $X_0$ being $\mu$. 
If $\mu$ is degenerate at $x$, we shall simply write $\Ps_x~(\E_x)$ instead of
$\Ps_\mu ~(\E_{\mu})$. We assume throughout this section that
there exists a stationary probability distribution $\pi$, $\pi(A) = \int \Ps(x,A) \, \d \pi(x)$
for all $A \in {\cal A}$ and $\E_{\pi} Y_1  >0$.

Let $\{Z_n = S_n + \eta_n, n\ge 0\}$ be a
perturbed Markov random walk in the following sense: $S_n$ is a Markov random walk, $\eta_n$ is
${\cal F}_n$-measurable, where ${\cal F}_n$ is the
$\sigma$-algebra generated by $\{(X_k,S_k), 0\le k \le n\}$, and $\eta_n$ is {\it slowly changing},
that is, $\max_{1 \le t \le n} |\eta_t|/n \to 0$ in probability.  For $\lambda \ge 0$ define
\begin{eqnarray}
 T = T_{\lambda} = \inf\{n \ge 1: Z_n > \lambda \},~~~\inf \varnothing
 = \infty.
\end{eqnarray}
It is easy to see that for all $\lambda > 0$, $T_\lambda < \infty$ with probability $1$.
This section concerns the approximations to the distribution of the overshoot
and the expected stopping time $\E_\mu T$ as the boundary tends to infinity.

We assume that the Markov chain $\{X_n,n \ge 0\}$ on a state space ${\cal X}$ is $V$-uniformly ergodic
defined as (\ref{3.11}). The following assumptions for Markov chains are used in this subsection.

A1. $\sup_x \big\{\frac{\E(V(X_1))}{V(x)} \big\} < \infty$,

A2. $\sup_x \E_x |Y_1|^2 < \infty$ and
$\sup_x \big\{\frac{\E(|Y_1|^r V(X_1))}{V(x)} \big\} < \infty$ for some $r \ge 1$.

A3. Let $\mu$ be an initial distribution of the Markov
chain $\{X_n, n \ge 0\}$. For some $r \ge 1$,
\begin{equation}
 \sup_{||h||_V \le 1} \abs{\int_{x \in {\cal X}} h(x) \E_x|Y_1|^{r} \, \d\mu(x) }  < \infty.
\end{equation}

A Markov random walk is called {\it lattice} with span $d > 0$ if $d$ is
the maximal number for which there exists a measurable function $\gamma:
{\cal X} \to [0,\infty)$ called the shift function, such that
$\Ps\{Y_1-\gamma(x) + \gamma(y) \in \{\cdots, -2d, -d, 0, d,
 2d,\cdots\}| X_0 = x, X_1 = y\} = 1$
for almost all $x,y\in {\cal X}$. If no such $d$ exists,
the Markov random walk is called {\it nonlattice}.  A lattice random walk
whose shift function $\gamma$ is identically 0 is called {\it arithmetic}.

To establish the Markov nonlinear renewal theorem, we shall make use of
(\ref{8.4}) in conjunction with the following extension
of Cramer's (strongly nonlattice) condition:
There exists $\delta > 0$ such that for all $m,n=1,2,\cdots$, $\delta^{-1} <m< n$,
and all $\theta \in R$ with $|\theta| \ge \delta$
\begin{eqnarray*}
\E_{\pi} | \E\{\exp(i\theta  (Y_{n-m}+\cdots+Y_{n+m}))|X_{n-m},\cdots,X_{n-1},X_{n+1},
\cdots,X_{n+m},X_{n+m+1}\}| \le e^{-\delta}.
\end{eqnarray*}

Let $\Ps_+^u(x,B \times R) = \Ps_x\{X_{\tau(0,u)} \in B \}$ for
$u \le \E_\pi Y_1$, denote the transition probability associated with the Markov
random walk generated by the ascending ladder variable
$S_{\tau(0,u)}$.  Here $\tau(0,u):=\inf\{n: S_n > 0\}. $  Under the $V$-uniform ergodicity condition
and $\E_\pi Y_1> 0$, a similar argument as on page 255 of Fuh and Lai \cite{FuhLai01}
yields that the transition probability $\Ps_+^u(x, \cdot \times R)$ has an invariant measure
$\pi_+^u$. Let $\E_{+}^u$ denote expectation when $X_0$ has the initial
distribution $\pi_+^u$. When $u=\E_\pi Y_1$, we denote $\Ps_+^{\E_\pi Y_1}$ as $\Ps_+$, and
$\tau_+ = \tau(0,\E_\pi Y_1)$. Define 
\begin{align}
\kappa &= \kappa_\lambda = Z_T - \lambda, \\
\kappa_+ &= \E_{+} S_{\tau_+}^2/2\E_{_+} S_{\tau_+}, \\
G(y) &= \frac{1}{\E_{+} S_{\tau_+}} \int_y^\infty \Ps_{+} \{S_{\tau_+} > s\} \, \d s ,~y \ge 0.
\end{align}

\begin{proposition}\label{P2}
Assume {\rm A1} holds, and {\rm A2}, {\rm A3} hold with $r=1$ and  $\E_\pi Y_1 \in (0,\infty)$. 
Let $\mu$ be an initial distribution of $X_0$. Suppose for every $\varepsilon > 0$ there is 
$\delta > 0$ such that
\begin{eqnarray}
&~& \lim_{n\to\infty} \Ps_\mu \brc{\max_{1\le j\le n \delta} |\eta_{n+j} - \eta_n| \ge \varepsilon}= 0.
\end{eqnarray}

If $Y_1$ does not have an arithmetic distribution under $\Ps_\mu$, then for any $y \ge 0$
\begin{eqnarray}
 \Ps_\mu \{X_T\in B, \kappa_\lambda > y\}  
= \frac{1}{\E_{+} S_{\tau_+}}\int_{x\in B} \, \d \pi_+(x) \int_y^\infty \Ps_{+} \{ S_{\tau_+} > s\} \, \d s +  o(1) ~~ \text{as}~\lambda \to \infty. 
\end{eqnarray}
In particular, $\Ps_\mu \{\kappa_\lambda > y\} = G(y) + o(1)$ as $\lambda\to\infty$ for any $r\ge 0$.
If, in addition, $(T-\lambda)/\sqrt{\lambda}$ converges in distribution to a random
variable $W$ as $\lambda \to \infty$, then
\begin{eqnarray}
\lim_{b_\lambda \to \infty} \Ps_\mu \{\kappa_\lambda > y,~ T_\lambda \ge \lambda + t\sqrt{\lambda}\} =    G(y)\Ps_+ \{W\ge t\},
\end{eqnarray}
for every real number $t$ with $P_+\{W=t\} = 0$.
\end{proposition}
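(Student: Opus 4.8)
The plan is to prove the proposition in two stages, following the Markov nonlinear renewal scheme of Fuh \cite{FuhAS04}: first establish the overshoot limit for the \emph{linear} Markov random walk $\{S_n\}$, and then transfer it to the perturbed walk $\{Z_n=S_n+\eta_n\}$ via the slowly changing property of $\eta_n$. Throughout, A1--A3 with $r=1$, together with $V$-uniform ergodicity and $\E_\pi Y_1\in(0,\infty)$, guarantee that the ascending ladder chain generated by $S_{\tau_+}$ is positive recurrent with a proper invariant measure $\pi_+$ and that $\E_+ S_{\tau_+}<\infty$, so that $G$ is a genuine distribution function and all limits below are proper.

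First I would treat the linear case. Writing $N_b=\inf\{n\ge 1: S_n>b\}$, the goal is to show that, as $b\to\infty$,
\[
\Ps_\mu\set{X_{N_b}\in B,\, S_{N_b}-b>y} \longrightarrow \frac{1}{\E_+ S_{\tau_+}}\int_{x\in B}\d\pi_+(x)\int_y^\infty \Ps_+\set{S_{\tau_+}>s}\,\d s .
\]
This is the Markov renewal theorem for the joint law of the terminal state and the overshoot. Under A1--A2 the split-chain construction of Athreya and Ney \cite{AthreyaNeyTrans78}, available here through the Harris recurrence supplied by $V$-uniform ergodicity, reduces the ladder process to an i.i.d.\ renewal sequence of regeneration blocks; the non-arithmeticity of $Y_1$ under $\Ps_\mu$ then yields the stated limit via the key renewal theorem, exactly as on page 255 of Fuh and Lai \cite{FuhLai01}. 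The invariant measure $\pi_+$ and the normalizing factor $1/\E_+ S_{\tau_+}$ arise from the size-biasing of the ladder step, and the moment hypothesis with $r=1$ ensures the limiting measure is proper.

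Next I would carry out the nonlinear transfer, which is the technical heart of the argument. By the strong law for Markov random walks $T_\lambda\to\infty$ and $T_\lambda\sim\lambda/\E_\pi Y_1$ $\Ps_\mu$-a.s., so the crossing occurs in a predictable window. The idea is to \emph{freeze} $\eta$ at its value near the crossing: on the event that $\eta$ varies by at most $\varepsilon$ over the relevant block---a high-probability event by the slowly changing hypothesis $\lim_n\Ps_\mu(\max_{1\le j\le n\delta}|\eta_{n+j}-\eta_n|\ge\varepsilon)=0$---the event $\{Z_n>\lambda\}$ is sandwiched between $\{S_n>\lambda-\eta_{T_\lambda}\pm\varepsilon\}$, and the overshoot $Z_{T_\lambda}-\lambda$ differs from the linear overshoot at the random threshold $\lambda-\eta_{T_\lambda}$ by at most $\varepsilon$. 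Applying the linear limit at that threshold (legitimate because the renewal limit is independent of the threshold value), then letting $\varepsilon\to 0$ and using the continuity of $G$, gives $\Ps_\mu\set{\kappa_\lambda>y}\to G(y)$ and, retaining the terminal state, the joint statement. The main obstacle is \emph{uniformity}: since the effective threshold $\lambda-\eta_{T_\lambda}$ is random, one needs the linear overshoot convergence to hold uniformly over a neighbourhood of thresholds, and a uniform-integrability control (furnished by A2--A3) to discard contributions from atypically large or early crossings.

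Finally, for the joint limit with $(T_\lambda-\lambda)/\sqrt\lambda$, I would invoke asymptotic independence. The overshoot $\kappa_\lambda$ depends on the path only through the last ladder epoch before crossing, whereas $(T_\lambda-\lambda)/\sqrt\lambda$ is a central-limit-scale fluctuation determined by the bulk of the trajectory; the regenerative decomposition makes these functionals depend on asymptotically disjoint portions of the path. Conditioning on a large overshoot forces a fresh ladder epoch and re-initializes the walk from $\pi_+$, which is precisely why the fluctuation factor enters as $\Ps_+\set{W\ge t}$ rather than $\Ps_\mu\set{W\ge t}$. Combining this factorization with the continuity-point hypothesis $\Ps_+\set{W=t}=0$ yields the product form $G(y)\,\Ps_+\set{W\ge t}$, completing the argument.
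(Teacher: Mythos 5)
The paper offers no proof of Proposition~\ref{P2}: the appendix explicitly presents it as a simplified restatement of Theorem~1 of Fuh \cite{FuhAS04}, so there is no internal argument to compare yours against. That said, your sketch follows precisely the two-stage strategy underlying the cited result --- a Markov renewal theorem for the unperturbed walk, obtained from the Athreya--Ney regeneration scheme \cite{AthreyaNeyTrans78} and the key renewal theorem as in Fuh and Lai \cite{FuhLai01}, followed by a Lai--Siegmund ``freezing'' transfer to $Z_n=S_n+\eta_n$ via the slowly changing hypothesis, and a local/global decoupling argument for the joint limit with $(T_\lambda-\lambda)/\sqrt{\lambda}$ --- so at the level of approach it is the right argument and correctly identifies where each hypothesis (non-arithmeticity, $V$-uniform ergodicity, the moment conditions) enters. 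Two points deserve care if you were to write this out in full. First, the sandwich $\{Z_n>\lambda\}\approx\{S_n>\lambda-\eta_{T_\lambda}\pm\varepsilon\}$ is circular as stated, since $T_\lambda$ is defined through $Z$; the standard repair is to localize at a deterministic epoch $n_0\approx\lambda/\E_\pi Y_1$, condition on $(X_{n_0},S_{n_0},\eta_{n_0})$, and apply the linear renewal limit to the post-$n_0$ walk crossing the level $\lambda-S_{n_0}-\eta_{n_0}$, which makes the required uniformity explicit: the Markov renewal limit must hold uniformly both in the threshold and in the initial state $X_{n_0}$ (the latter being exactly what $V$-uniform ergodicity buys). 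You name this obstacle but do not discharge it, so the proposal remains a correct outline rather than a complete proof. Second, your heuristic for the factor $\Ps_+\{W\ge t\}$ (re-initialization from $\pi_+$ at the last ladder epoch) is the right intuition, but the actual argument requires showing that $\kappa_\lambda$ and $(T_\lambda-\lambda)/\sqrt{\lambda}$ are asymptotically independent via the conditional version of the renewal theorem given $\Fc_{n_0}$; this too is asserted rather than proved. Neither point is a wrong turn --- both are the genuine technical content of Fuh's Theorem~1 --- but they are the substance of the proof, not details.
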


To study the expected value of the nonlinear stopping times, we shall first give the regularity conditions on the nonlinear perturbation $\eta= \{\eta_n, n\ge 1\}$.
The process $\eta $ is said to be {\it regular} 
if there exists a random variable
$L$, a function $f(\cdot)$ and a sequence of random variables $U_n,~ n\ge 1$, such that
\begin{eqnarray}
&~& \eta_n = f(n)+U_n ~\mbox{ for~} n\ge L~ \mbox{ and~} \sup_{x \in {\cal X}} \E_x L <\infty, \label{8.13} \\
&~& \max_{1\le j\le\sqrt n} |f(n+j) - f(n)| \le K,~~~ K < \infty,  \\
&~& \big\{ \max_{1\le j\le n} |U_{n+j}|,~ n\ge 1 \big\}~
\mbox{is~uniformly~integrable},  \\
&~& n \sup_{x \in {\cal X}} \Ps_x \big \{\max_{0\le j\le n} U_{n+j} \ge \theta n \big\}
\to 0~~ \mbox{as~} n\to \infty ~{\rm for~all}~\theta > 0,\\
&~&\sum_{n=1}^\infty \sup_{x \in {\cal X}} \Ps_x\{ -U_n \ge wn\} < \infty~~~{\rm for~some}~
 0< w < \E_\pi Y_1,  
\end{eqnarray}
and there exists $0 < \varepsilon < 1$ such that
\begin{eqnarray}
 \sup_{x \in {\cal X}} \Ps_x \brc{ T_\lambda \leq \frac{\varepsilon \lambda}{\E_m Y_1} } =
 o({1}/{\lambda})~~~{\rm as}~\lambda \to \infty.  \label{8.17}
\end{eqnarray}

We need the following notation and definitions before we formulate the Markov Nonlinear Renewal Theorem (MNRT).
For a given Markov random walk $\{(X_n,S_n),n \ge 0\}$,
let $\mu$ be an initial distribution of $X_0$ and define
$\mu^*(B)=\sum_{n=0}^\infty \Ps_\mu\big(X_n\in B \big)$ on ${\cal A}$.
Let $g=\E(Y_1| X_0,X_1)$ and $\E_\pi|g|<\infty$.
Define operators ${\bf P}$ and ${\bf P}_\pi$ by
$({\bf P}g)(x)=\E_x g(x,X_1,Y_1)$ and ${\bf P}_\pi g
=\E_\pi g(X_0,X_1,Y_1)$ respectively,
and set $\overline{g}={\bf P}g$.  We shall consider solutions
$\Delta(x)=\Delta(x;g)$ of the Poisson equation
\begin{eqnarray}\label{Poi}
\big(I-{\bf P}\big)\Delta = \big(I-{\bf P}_\pi \big) \overline{g}~~~
\mu^*\mbox{-a.s.},~~~{\bf P}_\pi \Delta = 0,
\end{eqnarray}
where $I$ is the identity operator. Under conditions A1--A3,
it is known (cf. Theorem 17.4.2 of Meyn and Tweedie \cite{MeynTweedie09}) that the solution
$\Delta$ of (\ref{Poi}) exists and is bounded.

\begin{proposition}[MNRT]\label{P3}
Assume {\rm A1} holds, and {\rm A2}, {\rm A3} hold with $r=2$.
Let $\mu$ be an initial distribution such that $\E_\mu V(X_0) < \infty$. Suppose that
conditions {\rm (\ref{8.13})}--{\rm (\ref{8.17})} hold.
Then, as $\lambda\to\infty$,
\begin{equation}
\begin{array}{ll}
 \E_\mu T_\lambda = (\E_\pi Y_1)^{-1} \bigg(\lambda + {\E_{\pi_+} S_{\tau_+}^2}/{2 \E_{\pi_+}S_{\tau_+}}- f(\lambda/\E_\pi Y_1)
  -\E_{\pi}U \\
 ~~~~~~~~~~~~~~~~~~~~~~~ - \int\Delta(x) \, \d(\pi_+(x)  - \mu(x))\bigg)  + o(1). 
 \end{array}
\end{equation}

\end{proposition}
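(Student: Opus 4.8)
The plan is to reduce the nonlinear crossing problem to the linear Markov renewal theorem via the Markov analogue of Wald's identity, and then to absorb the slowly changing perturbation through its regular decomposition. Write $\mu_Y=\E_\pi Y_1>0$ and $t_\lambda=\lambda/\mu_Y$. First I would record the identity coming from the Poisson equation \eqref{Poi}: its bounded solution $\Delta$, normalized by $\E_\pi\Delta=0$, makes $M_n=S_n-n\mu_Y-\Delta(X_n)$ a mean-zero $\{\mathcal{F}_n\}$-martingale. A preliminary but essential step is to show $\E_\mu T_\lambda<\infty$ and, quantitatively, that $T_\lambda/t_\lambda\to 1$ together with uniform integrability of $\{T_\lambda/t_\lambda\}$; this is where conditions \eqref{8.13}--\eqref{8.17} enter, with \eqref{8.17} ruling out an atypically early crossing and the slowly changing property of $\eta_n$ forcing $Z_n/n\to\mu_Y$. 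Granting this, optional stopping applied to $M_{T_\lambda}$ (legitimate once $\E_\mu T_\lambda<\infty$ and the stopped martingale is uniformly integrable, which A2 with $r=2$ secures) yields
\[
\mu_Y\,\E_\mu T_\lambda=\E_\mu S_{T_\lambda}-\E_\mu\Delta(X_{T_\lambda})+\int\Delta\,\d\mu.
\]

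Next I would exploit the overshoot decomposition. Since $Z_{T_\lambda}>\lambda\ge Z_{T_\lambda-1}$, writing $\kappa_\lambda=Z_{T_\lambda}-\lambda\ge 0$ for the overshoot gives $S_{T_\lambda}=\lambda-\eta_{T_\lambda}+\kappa_\lambda$, whence
\[
\mu_Y\,\E_\mu T_\lambda=\lambda-\E_\mu\eta_{T_\lambda}+\E_\mu\kappa_\lambda-\E_\mu\Delta(X_{T_\lambda})+\int\Delta\,\d\mu.
\]
It then remains to evaluate three limits as $\lambda\to\infty$. The overshoot limit $\E_\mu\kappa_\lambda\to\E_{\pi_+}S_{\tau_+}^2/(2\E_{\pi_+}S_{\tau_+})=\int_0^\infty y\,\d G(y)$ is precisely the content of Proposition~\ref{P2} --- the slowly changing $\eta_n$ does not alter the limiting overshoot law --- upgraded from convergence in distribution to convergence of means by uniform integrability of $\kappa_\lambda$ afforded by the second-moment hypothesis. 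The term $\E_\mu\Delta(X_{T_\lambda})\to\int\Delta\,\d\pi_+$ follows because $\Delta$ is bounded and the post-crossing state $X_{T_\lambda}$ has the same weak limit $\pi_+$ (the invariant law of the ascending ladder chain) as in the linear case, again by Proposition~\ref{P2}.

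The heart of the argument is the perturbation term $\E_\mu\eta_{T_\lambda}$. Using $\eta_n=f(n)+U_n$ for $n\ge L$ (with $\sup_x\E_x L<\infty$ from \eqref{8.13}), I would split it into $\E_\mu f(T_\lambda)$ and $\E_\mu U_{T_\lambda}$. For the deterministic part, the near-constancy of $f$ over windows of length $\sqrt{n}$ together with the concentration $T_\lambda/t_\lambda\to 1$ gives $\E_\mu f(T_\lambda)-f(t_\lambda)\to 0$; the tail conditions on $T_\lambda$, in particular \eqref{8.17}, are tailored to freeze the value of $f$ at $t_\lambda=\lambda/\mu_Y$. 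For the random part, the uniform integrability of $\max_{1\le j\le n}|U_{n+j}|$ combined with the Markov ergodic theorem at the random time $T_\lambda$ yields $\E_\mu U_{T_\lambda}\to\E_\pi U$. Substituting the three limits gives
\[
\mu_Y\,\E_\mu T_\lambda=\lambda-f(t_\lambda)-\E_\pi U+\frac{\E_{\pi_+}S_{\tau_+}^2}{2\E_{\pi_+}S_{\tau_+}}-\int\Delta\,\d(\pi_+-\mu)+o(1),
\]
which is the asserted expansion after dividing by $\mu_Y$.

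The step I expect to be the main obstacle is establishing, under A1--A3 with $r=2$ alone, the two uniform-integrability facts everything rests on: uniform integrability of $T_\lambda/t_\lambda$ (needed both for the Wald step and for freezing $f$ at $t_\lambda$) and of the overshoot $\kappa_\lambda$ (needed to pass from the distributional statement of Proposition~\ref{P2} to a statement about means). Both demand quantitative control of the lower tail of $T_\lambda$ --- exactly the role of \eqref{8.17} --- and of the fluctuations of $S_n$ near the boundary; the slowly changing hypothesis on $\eta_n$ and the $V$-uniform ergodicity of $\{X_n\}$ are what make these controls uniform in the initial law $\mu$.
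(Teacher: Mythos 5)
The paper contains no proof of Proposition~\ref{P3}: it is stated as a simplified restatement of Theorem~3 and Corollary~1 of Fuh \cite{FuhAS04}, and your outline --- Wald's identity for the Markov random walk obtained by optional stopping of the Poisson-equation martingale correction of $S_n - n\E_\pi Y_1$, the overshoot decomposition $S_{T_\lambda}=\lambda-\eta_{T_\lambda}+\kappa_\lambda$, and passage to the limit in the three terms via uniform integrability and the ladder-chain invariant law $\pi_+$ --- is precisely the strategy of that source (the Markov analogue of the Lai--Siegmund/Woodroofe argument). The one step where your sketch is looser than what is actually required is $\E_\mu f(T_\lambda)=f(\lambda/\E_\pi Y_1)+o(1)$: since the regularity hypothesis only controls the oscillation of $f$ over windows of length $\sqrt{n}$, you need the CLT-rate localization $T_\lambda-\lambda/\E_\pi Y_1=O_p(\sqrt{\lambda})$ (an Anscombe-type statement available under A2 with $r=2$), not merely $T_\lambda/t_\lambda\to 1$, which by itself permits deviations far exceeding $\sqrt{\lambda}$.
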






\begin{thebibliography}{99}

\bibitem[Athreya and Ney(1978)]{AthreyaNeyTrans78} 
Athreya, K. B. and Ney, P. (1978). A new approach to the limit theory of recurrent
 Markov chains. \emph{Trans. Amer. Math. Soc.} {\bf 245} 493--501.
 
\bibitem[Baron and Tartakovsky(2006)]{BaronTartakovskySA06}
Baron, M. and Tartakovsky, A. G. (2006). Asymptotic optimality of
change-point detection schemes in general continuous-time models,
\emph{Sequential Analysis} {\bf 25} 257--296.

\bibitem[Blanding et al.(2009)]{Willett}
Blanding, W. R., Willett, P. K., Bar-Shalom, Y., and Coraluppi, S. (2009). Multisensor track management for targets with fluctuating SNR. 
{\em IEEE Trans. Aerospace and Electronic Systems} {\bf 45} 1275--1292.   

\bibitem[Churchill (1989)]{churchill:1989} 
Churchill, G. A. (1989). Stochastic models for heterogeneous DNA sequences.
{\it Bulletin of Mathematical Biology} {\bf 51} 79--94.

\bibitem[Fuh(2003)]{FuhAS03} Fuh, C. D. (2003).
SPRT and CUSUM in hidden Markov models.
{\it Ann. Statist.} {\bf 31} 942--977.


\bibitem[Fuh(2004)]{FuhAAP04} Fuh, C. D. (2004). Uniform Markov renewal theory and 
ruin probabilities in Markov random walks. {\it Ann. Appl. Probab.} {\bf 14} 1202--1241.

\bibitem[Fuh(2004)]{FuhAS04} Fuh, C. D. (2004). 
Asymptotic operating characteristics of an optimal change point
detection in hidden Markov models. {\it Ann. Statist.} {\bf 32} 2305--2339.

\bibitem[Fuh(2004)]{FuhSS04} Fuh, C. D. (2004). On Bahadur efficienty of the maximum likelihood
estimator in hidden Markov models. {\it Statistica Sinica} {\bf 14} 127-154.

\bibitem[Fuh and Lai(2001)]{FuhLai01} 
Fuh, C. D. and Lai, T. L. (2001).
Asymptotic expansions in multidimensional Markov
renewal theory and first passage times for Markov random walks. {\it Adv. Appl. Probab.} {\bf 33} 652--673.

\bibitem[Fuh and Mei(2015)]{FuhMei15}
Fuh, C. D. and Mei, Y. (2015).
Quickest change detection  and Kullback-Leibler divergence for 
two-state hidden Markov models. {\it IEEE Transactions on Signal Processing} {\bf 47} 4866--4878.

\bibitem[Fuh and Zhang(2000)]{FuhZhang00}
Fuh, C. D. and Zhang, C. H. (2000).
Poisson equation, maximal inequalities and $r$-quick convergence
for Markov random walks. {\it Stoch. Proc. Appl.} {\bf 87} 53--67.

\bibitem[HuBrown(1996)]{hu:brown:1996}
Hu, J., Brown, M.K., and Turin, W. (1996).
HMM based on-line handwriting recognition.
{\it IEEE Trans. Pattern Analysis and Machine Intelligence} {\bf 18} 1039--1045.

\bibitem{juang&rabiner93}
Juang, B. H. and Rabiner, L.R. (1993). {\em Fundamentals of Speech Recognition}, Prentice Hall, New York.

\bibitem[Kunda(1989)]{kunda:1989}
Kunda, A., He, Y., and Bahl, P. (1989).
Recognition of handwritten word: First and second order hidden Markov model based approach. {\it Pattern Recognition}
{\bf 22} 283--297.     


\bibitem[Lai(1976)]{Lairquick}
Lai, T. L. (1976). On $r$-quick convergence and a conjecture of Strassen, {\it
Ann.\ Probab.} {\bf 4} 612--627.

\bibitem[Meyn and Tweedie(2009)]{MeynTweedie09}
Meyn, S. P. and Tweedie, R. L. (2009). {\it Markov Chains and
 Stochastic Stability.} Second edition, Springer-Verlag, New York.

\bibitem[Moustakides(2008)]{MoustakidesAS08}
Moustakides, G. V. (2008). Sequential Change Detection Revisited,
{\em  Annals of Statist.} {\bf 36} 787--807.

\bibitem[Moustakides et al.(2011)]{Moustakidesetal-SS11}
Moustakides, G.V., Polunchenko, A.S., and Tartakovsky, A.G. (2011).
A numerical approach to performance analysis of quickest change-point  detection procedures,
{\em Statistica Sinica}, {\bf 21}(2):571--596.

\bibitem[Ney and Nummelin(1987)]{NeyNummelinAP87}
Ney, P. and Nummelin, E. (1987). Markov additive processes I. eigenvalue properties and limit theorems. {\em Ann. Probab.} {\bf 15} 561--592.

\bibitem[Rabiner (19890]{rabiner:1989}
Rabiner, L.R. (1989). A tutorial on Hidden Markov Models and Selected Applications in Speech Recognition. {\it Proceeding of IEEE} {\bf 77} 257--286.

\bibitem[Raghavan et al.(2013)]{RGT2013} 
Raghavan, V., Galstyan, A., and Tartakovsky, A. G. (2013). Hidden Markov models for the activity profile of terrorist groups, {\it Annals of Applied Statist.} {\bf 7} 2402--2430.  
DOI: 10.1214/13-AOAS682

\bibitem[Raghavan et al.(2014)]{Raghavanetal-IEEE2014}
Raghavan, V.,  Steeg, G.V., Galstyan, A., and Tartakovsky, A.G. (2014).
Modeling temporal activity patterns in dynamic social networks, {\em IEEE Transactions on Computational Social Systems},  {\bf 1}(1): 89--107.

\bibitem[Shiryaev(1963)]{ShiryaevTPA63}
Shiryaev, A. N. (1963). On optimum methods in quickest detection
problems, {\em Theory of Probability and its Applications} {\bf 8} 22--46.

\bibitem[Shiryaev(1978)]{ShiryaevBook78}
Shiryaev, A. N. (1978). {\em Optimal Stopping Rules}, New York: Springer-Verlag.

\bibitem[Tartakovsky(1998)]{TarSISP98}
Tartakovsky, A.G. (1998). Asymptotic optimality of certain multihypothesis
sequential tests: Non-i.i.d.\ case. \emph{Statistical Inference for Stochastic
Processes} {\bf 1}  265--295.

\bibitem[Tartakovsky and Moustakides(2010)]{TM2010}
 Tartakovsky, A.G. and Moustakides, G.V. (2010). State-of-the-art in Bayesian changepoint detection. {\em Sequential Analys.} {\bf 29} 125--145.

\bibitem[Tartakovsky et al.(2014)]{TNBbook14}
Tartakovsky, A., Nikiforov, I., and Basseville, M. (2014).  {\em Sequential Analysis: Hypothesis Testing and Changepoint Detection.}  
Chapman \& Hall/CRC Press (Monographs on Statistics and Applied Probability), Boca Raton, London, New York.

\bibitem[Tartakovsky et al.(2011)]{TarPolPol11}
  Tartakovsky, A.G., Pollak, M., and Polunchenko, A.S. (2011).
Third-order asymptotic optimality of the generalized 
 Shiryaev--Roberts changepoint detection procedures. 
{\em Theory Probab. Appl.}  {\bf 56} 534--565. 

\bibitem[Tartakovsky and Veeravalli(2005)]{TartakovskyVeerTVP05}
Tartakovsky, A. G. and Veeravalli, V. V. (2005). General asymptotic
Bayesian theory of quickest change detection, {\em Theory of
Probability and its Applications} {\bf 49} 458--497.

\bibitem{Fusion2010}
Tugac, S. and Elfe, M. (2010). Hidden Markov model based target detection. {\em 13th International Conference on Information Fusion (FUSION 2010)},  1--7, Edinburgh, Scotland (26-29 July 2010).

\bibitem{WSEAS2014}
Vasuhi, S. and Vaidehi, V. (2014). Target detection and tracking for video surveillance.
{\em WSEAS Transactions on Signal Processing} {\bf 10} 168--177.

\bibitem[Yakir(1994)]{Yakir94}
Yakir, B. (1994). Optimal detection of a change in distribution
when the observations form a Markov chain with a finite state space.
In Change-Point Problems, E. Carlstein, H. Muller and D. Siegmund, Eds.
Hayward, CA: Inst. Math. Statist., 346-358.


\bibitem{HAR} 
Yamato, J., Ohya, J., and Ishii, K. (1992). Recognizing human action in time-sequential images using hidden Markov model. In {\em IEEE Computer Society Conf. on Computer Vision and
Pattern Recognition} 379--385.

\end{thebibliography}
\end{document}